\def\widebreve#1{\mathop{\vbox{\m@th\ialign{##\crcr\noalign{\kern3\p@}%
      \brevefill\crcr\noalign{\kern3\p@\nointerlineskip}%
      $\hfil\displaystyle{#1}\hfil$\crcr}}}\limits}
\def\brevefill{$\m@th \setbox\z@\hbox{$\braceld$}%
  \bracelu\leaders\vrule \@height\ht\z@ \@depth\z@\hfill\braceru$}
\newtheorem{theorem}{Theorem}[section]
\newtheorem*{theorem*}{Theorem}
\newtheorem{prop}[theorem]{Proposition}
\newtheorem{corol}[theorem]{Corollary}
\newtheorem{lemm}[theorem]{Lemma}
\theoremstyle{definition}
\newtheorem*{remark*}{Remark}
\newtheorem*{observation*}{}
\newcommand{\noin}{\noindent}
\providecommand{\AMS}{$\mathcal{A}$\kern-.1667em%
\lower.25em\hbox{$\mathcal{M}$}\kern-.125em$\mathcal{S}$}
\title{Semisimplicity of the deformations of the subcharacter algebra of an abelian group} 
\begin{document}

\author{\large \.{I}smail Alperen \"{O}\u{g}\"{u}t \footnote{e-mail: alperen\_ogut@hotmail.com}  \\
\mbox{} \\
\mbox{}}
\maketitle

\small
\begin{abstract}
\noin For those deformations that satisfy a certain non-degeneracy condition, we describe the structure of certain simple modules of the deformations of the subcharacter algebra of a finite group. For finite abelian groups, we prove that the deformation given by the inclusion of the natural numbers, which corresponds to the algebra generated by the fibred bisets over a field of characteristic zero, is not semisimple. In the cyclic group of prime order case, we provide a complete description of the semisimple deformations.
\smallskip\\
\noin 2010 {\it Mathematics Subject Classification:}
Primary 19A22, Secondary 16B50.

\smallskip
\noin {\it Keywords:} subcharacter algebra,
semisimple category, semisimple deformation,
fibred biset category, algebraic independence.
\end{abstract}
\section{Introduction}
The monomial Burnside ring $B^A(G)$ of a finite group $G$ was introduced by Dress in \cite{DR71}. It can be seen as a generalization of the Burnside ring $B(G)$; the Grothendieck ring of isomorphism classes of finite $G$-sets. The generalization is obtained by equipping the $G$-sets with one-dimensional characters whose image lie in an abelian group $A$. The resulting structure is a ring in which the several representation theoretic rings such as the Green ring or the trivial source ring can be realized via the linearization map.

There are two sided analogues of the previously mentioned structures. The biset category $B$, introduced by Bouc in \cite{BOU96}, and the fibred biset category $B^A$, an extensive study of which has recently been done in \cite{BC18}, could be given as the main examples. An important tool for studying these structures and their certain subcategories is to consider their embeddings into suitable ghost rings, i.e., possibly larger rings with simpler multiplicative structures. The papers \cite{BD13} and \cite{BD12} can be given as the initial sources for this approach. It has also been adopted in \cite{mfa} and \cite{BO20} with the aim of obtaining the deformations of the biset and the fibred biset category.

Let $\mathbb{K}$ be a field of characteristic zero with sufficiently many roots of unities. Our main object of interest will be the \emph{subcharacter algebra} $\mathbb{K}\Lambda^{A}_\mathcal{K}$; an algebra where the elements are given by the $A$-subcharacters of the direct products of the groups contained in a set $\mathcal{K}$ of finite groups. The multiplication is given by the \emph{star product} of subcharacters but that definition requires a lengthy setup so shall be discussed later. In this paper, we aim to form a starting point for the study of the semisimplicity and the classification of the simple modules of the deformations of the fibred biset category. The paper \cite{mfa} achieves a similar goal for the deformations of the biset category. The enriched structure $B^A$ seems to be more complicated, for this reason we will mostly be focusing on the case of abelian groups but we do provide a description for certain simple modules in the general case. 

The algebra $\mathbb{K}\Lambda^A_\mathcal{K}$ can also be seen as a category where the objects are given by $\mathcal{K}$ and if one decides to depart from the option of employing ring theoretic techniques, it can also be extended so that its objects are all finite groups. This larger structure is named \emph{the subcharacter partial category}. The paper \cite{BO20} establishes that the fibred biset category and its deformations can be realized as invariant subcategories of the subcharacter partial category and of its deformations.

We shall be considering the $\ell$-\emph{deformed subcharacter algebra} $\mathbb{K}_\ell{\Lambda^{A}_\mathcal{K}}$; the deformation of $\mathbb{K}\Lambda^A_\mathcal{K}$ afforded by a monoid homomorphism $\ell :\mathbb{N}^{+}\rightarrow \mathbb{K}^{\times}$ whose job is to manipulate the coefficient of the multiplication of the subcharacters in a special way. In \cite{mfa}, an important aspect of $\ell$ is shown to be its algebraic independence, that is, the algebraic independence of the set $\lbrace \ell (q)\rbrace_q$ over the minimal subfield $\mathbb{Q}$ of $\mathbb{K}$ where $q$ runs over the prime divisors of the orders of groups contained in $\mathcal{K}$. There appears to be an important connection between the algebraic independence of $\ell$ and the structure of the algebra $\mathbb{K}_\ell\Lambda^{A}_\mathcal{K}$. For non-trivial $A$, the algebraic independence provides the simplicity of a certain module (Lemma \ref{l3.1}). A relevant result for the case when $A$ is trivial (which we will not use) is the following theorem. 
\begin{theorem}\cite[Theorem 1.1]{mfa}\label{talg}
When $\ell$ is algebraically independent with respect to $\mathcal{K}$, $\mathbb{K}_{\ell}\Lambda^1_\mathcal{K}$ is locally semisimple.
\end{theorem}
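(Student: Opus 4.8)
\medskip\noindent\textbf{A proof proposal.}
Since ``locally semisimple'' means that $\mathbb{K}_\ell\Lambda^1_{\mathcal{K}_0}$ is semisimple for every finite subset $\mathcal{K}_0\subseteq\mathcal{K}$, we may assume $\mathcal{K}$ finite and regard $\mathbb{K}_\ell\Lambda^1_{\mathcal{K}}$ as a finite-dimensional $\mathbb{K}$-algebra. It carries a $\mathbb{K}$-basis $(e_i)_i$ indexed by the $(G\times H)$-conjugacy classes of subgroups $L\le G\times H$ with $G,H\in\mathcal{K}$, equivalently by their Goursat quintuples; and the $\ell$-deformed star product is obtained from the undeformed one by replacing, in each structure constant, every natural number $m$ occurring there as the order of a prescribed subquotient by $\ell(m)=\prod_q\ell(q)^{v_q(m)}$. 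Let $\pi$ be the finite set of primes dividing $\prod_{G\in\mathcal{K}}|G|$, set $R:=\mathbb{K}[x_q:q\in\pi]$, and let $\Theta$ be the free $R$-module on $(e_i)_i$ with multiplication obtained from the undeformed structure constants by the substitution $m\mapsto\prod_q x_q^{v_q(m)}$. Then $\Theta$ is an associative unital $R$-algebra whose structure constants lie in $\mathbb{Q}[x_q:q\in\pi]$ --- when the fibre group is trivial no roots of unity intervene --- and $\mathbb{K}_\ell\Lambda^1_{\mathcal{K}}=\mathbb{K}\otimes_R\Theta$ along the evaluation map $R\to\mathbb{K}$, $x_q\mapsto\ell(q)$.

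The idea is to exhibit the non-vanishing of a suitable discriminant. Writing $L_a$ for left multiplication by $a$, let $d:=\det\bigl(\operatorname{tr}(L_{e_ie_j})\bigr)_{i,j}\in\mathbb{Q}[x_q:q\in\pi]$ be the discriminant of the regular trace form of $\Theta$ over $R$. For any point $a=(a_q)_{q\in\pi}\in\mathbb{K}^\pi$ the specialised algebra $\Theta_a:=\mathbb{K}\otimes_R\Theta$ (at $x_q\mapsto a_q$) is semisimple if and only if $d(a)\ne0$: over a field of characteristic zero, semisimplicity of a finite-dimensional algebra is equivalent to separability, hence to non-degeneracy of the trace form, and traces and multiplication commute with base change. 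Now $\mathbb{K}_\ell\Lambda^1_{\mathcal{K}}=\Theta_{\ell}$; since $\{\ell(q)\}_{q\in\pi}$ is algebraically independent over $\mathbb{Q}$ and $d$ has rational coefficients, the evaluation $p\mapsto p(\ell(q))$ is injective on $\mathbb{Q}[x_q:q\in\pi]$, so $d(\ell(q))\ne0$ provided $d$ is a nonzero polynomial. Thus everything reduces to showing $d\not\equiv 0$, for which it suffices to find one point $a$ with $d(a)\ne0$, i.e.\ one specialisation of the parameters at which the algebra is semisimple.

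For this I would degenerate the parameters to zero. The undeformed star product has the triangular feature that composing two subcharacters yields subcharacters whose middle subquotient is a subquotient of the given one; this endows $\Theta$ with a filtration indexed by the poset of isomorphism classes of subquotients of the groups in $\mathcal{K}$ (refined to a total order), satisfying $F_i\Theta\cdot F_j\Theta\subseteq F_{\min(i,j)}\Theta$, and the monomial $\prod_q x_q^{v_q(m)}$ attached to a composition term is non-constant exactly when that term strictly lowers the filtration. Consequently the specialisation $\Theta_0$ at $x_q\mapsto 0$ retains only the length-preserving part of the product; an explicit computation with Goursat's lemma should identify $\Theta_0$ with the associated graded algebra $\mathrm{gr}\,\Theta$, which by triangularity is the direct product, over the subquotient types $[T]$, of its degree-$[T]$ blocks, each block being a (possibly nonunital) matrix algebra over the group algebra $\mathbb{K}[\Gamma_T]$ of a finite group $\Gamma_T$ assembled from $\mathrm{Out}(T)$. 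By Maschke's theorem and Morita invariance each block is semisimple, so $\Theta_0$ is semisimple; therefore $d(0,\dots,0)=\operatorname{disc}(\Theta_0)\ne 0$, whence $d\not\equiv 0$, and the proof is complete.

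The crux is the structural input of the last paragraph: one must make the star-product composition formula explicit enough --- through Goursat's lemma / butterfly data and the Mackey-type expansion of a star product --- to verify (i) that the subquotient filtration is multiplicative with $F_i\Theta\cdot F_j\Theta\subseteq F_{\min(i,j)}\Theta$; (ii) that the deformation monomials $\prod_q x_q^{v_q(m)}$ are non-constant precisely on the composition terms that lower the filtration, so that the $x_q\mapsto 0$ specialisation really computes $\mathrm{gr}\,\Theta$; and (iii) that each graded block is Morita equivalent to a finite-group algebra, which requires pinning down $\Gamma_T$ and checking that no nilpotents survive inside a fixed filtration degree. The remaining ingredients --- the reduction to finite $\mathcal{K}$, the trace-form/discriminant criterion in characteristic zero, and the passage through algebraic independence --- are formal.
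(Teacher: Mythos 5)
First, note that the paper does not prove this statement: it is quoted from \cite[Theorem 1.1]{mfa}, and the strategy actually used there (and mirrored in Section~3 of the present paper for the fibred case) is to exhibit explicit simple submodules of the regular module, prove their simplicity by showing that a certain determinant is a nonzero polynomial in the $\ell(q)$ --- which cannot vanish at an algebraically independent point --- and then count dimensions. Your reduction is genuinely different and its formal part is sound: passing to a finite $\mathcal{K}$, building the generic algebra $\Theta$ over $\mathbb{Q}[x_q:q\in\pi]$, observing that the trace-form discriminant $d$ is a polynomial with rational coefficients, and concluding from algebraic independence that semisimplicity at $\ell$ follows from $d\not\equiv 0$, i.e.\ from the existence of a single semisimple specialisation. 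This is a clean way to package the role of algebraic independence.

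The genuine gap is in the step you yourself flag as the crux, specifically your claim (ii). It is false that the deformation monomials are non-constant precisely on the composition terms that lower the thorax filtration: the deformation factor is $\ell(|k_2(U)\cap k_1(V)|)$, and this can be non-constant while the thorax is preserved. Concretely, for $U=1\times G$ and $V=G\times 1$ one has $U\ast V=1\times 1$ and $k_2(U)\cap k_1(V)=G$, so $s^{G,G}_{1\times G}\,s^{G,G}_{G\times 1}=\ell(|G|)\,s^{G,G}_{1\times 1}$, a thorax-preserving product carrying the full monomial. Hence $\Theta_0$ is \emph{not} the associated graded algebra of the thorax filtration, and the bottom (thorax-$1$) block of $\Theta_0$ is a degenerate, not an undeformed, algebra. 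This is exactly the phenomenon that makes Lemmas~\ref{l3.1} and~\ref{l3.3} of the present paper nontrivial: the trivial-thorax block depends essentially on the values $\ell(q)$, and its semisimplicity genuinely fails at special values (e.g.\ $\ell(q)=|(C_q)^\ast|$, which is $\ell(q)=1$ when $A$ is trivial). So your argument that $d(0,\dots,0)\neq 0$ is not established; to complete the proof along your lines you would need either to analyse $\Theta_0$ directly (whose product $s_U s_V = s_{U\ast V}$ if $k_2(U)\cap k_1(V)=1$ and $0$ otherwise is not obviously semisimple), or to replace the specialisation at $0$ by some other point, or to revert to the determinant-plus-explicit-modules argument of \cite{mfa}.
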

Let us mention that local semisimplicity means the semisimplicity of $\mathbb{K}_\ell\Lambda^1_\mathcal{O}$ for every finite collection $\mathcal{O}\subseteq\mathcal{K}$. The biset category can be seen as the fibred biset category with the trivial fibre group. Hence, as we have just done, one can take this shortcut in order to avoid introducing the language of the deformations of the biset category.

The semisimplicity is expected to extend beyond the deformations where $\ell$ is algebraically independent, for it is observed to do so in the situation $\mathcal{K}=\lbrace C_q\rbrace$. More explicitly, for the non-fibred case \cite[Example 3.7]{mfa}  shows that $\mathbb{K}_\ell\Lambda^1_{C_q}$ is semisimple if and only if $\ell(q)\not=1$, whereas Theorem \ref{talg} in this case would be restricted to those deformations for which $\ell(q)$ is transcendental.

An important case is when the abelian group $A$ satisfies \cite[Hypothesis 10.1]{BC18}, that is, there exists a unique set $\pi$ of primes such that given a natural number $n$, the $n$-torsion part of $A$ is a cyclic group of order equal to the $\pi$-part of $n$. One of the main results of this paper is Theorem \ref{t4.7}, which says that, given a finite abelian group $G$, if $\ell$ is the identity on $|G|_\pi$ and is trivial on $|G|_{\pi'}$, then $\mathbb{K}_\ell\Lambda^A_G$ is not semisimple. When $\ell(p)=p$ for every prime divisor of $G$, we get $\mathbb{K}_\ell\Lambda^A_G\cong\mathbb{K}B^A(G,G)$ (see the last line of Section \ref{s2}). An immediate consequence is Corollary \ref{c4.8}, stating that the $A$-fibred double Burnside $\mathbb{K}$-algebra $\mathbb{K}B^A(G,G)$ is not semisimple if $\pi$ contains every prime divisor of $G$. This can be seen as a generalization of \cite[Example 3.7]{mfa} to the case where the abelian group $A$ is not necessarily trivial. In the case of a cyclic group of prime order $C_q$, we provide a complete description for the deformations of the $\mathbb{K}$-algebra $\mathbb{K}B^A(C_q,C_q)$. Let $(C_q)^\ast$ denote the set $\mathrm{Hom}(C_q,A)$.

\begin{theorem}\label{t1}
Suppose that $A$ is an abelian group satisfying \cite[Hypothesis 10.1]{BC18}. Then the algebra $\mathbb{K}_\ell\Lambda^A_{C_q}$ is semisimple if and only if $\ell(q)\not=\vert (C_q)^\ast\vert$. When the semisimplicity holds, we have the following isomorphism of $\mathbb{K}$-algebras:
\begin{align*}
\mathbb{K}_\ell\Lambda^A_{C_q}\cong\mathrm{Mat}_{|(C_q)^\ast|+1}{(\mathbb{K})}\oplus\mathrm{Mat}_{\vert (C_q)^\ast \vert -1}{(\mathbb{K})}\oplus \mathbb{K}^{ q-1}.
\end{align*}
\end{theorem}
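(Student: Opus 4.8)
Write $m:=|(C_q)^\ast|$. By \cite[Hypothesis 10.1]{BC18}, $\mathrm{Hom}(C_q,A)$ is cyclic of order the $\pi$-part of $q$, so $m=q$ if $q\in\pi$ and $m=1$ if $q\notin\pi$; in either case $m\in\{1,q\}$. The standard basis of $\mathbb{K}_\ell\Lambda^A_{C_q}$ consists of the $A$-subcharacters $(V,\nu)$ of $C_q\times C_q$ (no conjugation is needed, $C_q\times C_q$ being abelian), organised by the underlying subgroup $V$: the trivial subgroup (one element); the two axes $C_q\times 1$ and $1\times C_q$ together with the $q-1$ twisted diagonals $\Delta_\phi=\{(x,\phi(x)):x\in C_q\}$ for $\phi\in\mathrm{Aut}(C_q)$ ($m$ characters each, giving $(q+1)m$ elements); and the full group ($m^2$ elements). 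Hence $\dim_{\mathbb K}\mathbb{K}_\ell\Lambda^A_{C_q}=1+(q+1)m+m^2$, which, using $m\in\{1,q\}$, equals $(m+1)^2+(m-1)^2+(q-1)$ --- exactly the dimension of the claimed decomposition, with the middle summand $\mathrm{Mat}_{m-1}(\mathbb K)$ being $0$ when $m=1$.

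\textbf{The ``only if'' direction.} Suppose $\ell(q)=m$. If $q\in\pi$ then $|C_q|_\pi=q$ and $\ell$ is the identity on the divisors of $|C_q|_\pi$ (since $\ell(q)=q=m$), while $|C_q|_{\pi'}=1$; if $q\notin\pi$ then $|C_q|_{\pi'}=q$ and $\ell$ is trivial there (since $\ell(q)=1=m$), while $|C_q|_\pi=1$. In both cases the hypothesis of Theorem \ref{t4.7} is met for $G=C_q$, so $\mathbb{K}_\ell\Lambda^A_{C_q}$ is not semisimple. This proves semisimplicity $\Rightarrow\ell(q)\neq m$.

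\textbf{The ``if'' direction and the explicit isomorphism.} Assume now $t:=\ell(q)\neq m$. Specialising the $\ell$-deformed star-product to $\mathcal K=\{C_q\}$, the only prime appearing in any index is $q$, so every structure constant in the standard basis is a polynomial in $t$ and in roots of unity of $\mathbb K$. First I would split off the ``outer'' block: on the span of the $e_{(\Delta_\phi,1)}$, $\phi\in\mathrm{Aut}(C_q)$, the star product agrees, up to a fixed nonzero scalar (a power of $t$ times a plain rational, hence in $\mathbb{K}^\times$), with the multiplication of $\mathbb{K}[\mathrm{Aut}(C_q)]$; correcting as in the non-fibred case \cite[Example 3.7]{mfa} yields a central idempotent $f$ of the whole algebra cutting out the part that factors through the trivial-$A$-character isomorphisms, with $f\mathbb{K}_\ell\Lambda^A_{C_q}f\cong\mathbb{K}[\mathrm{Aut}(C_q)]\cong\mathbb{K}^{q-1}$ --- independently of $\ell$. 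It then remains to identify the complementary corner $(1-f)\mathbb{K}_\ell\Lambda^A_{C_q}(1-f)$, of dimension $2m^2+2=(m+1)^2+(m-1)^2$. Here the $t$-dependence is concentrated in the few products mixing the two axes, the full group and the trivial subgroup across the characters of $(C_q)^\ast$; these contribute a factor $t$ against a character sum $\sum_{\nu\in(C_q)^\ast}\nu(x)$, equal to $m$ if $x=1$ and to $0$ otherwise. Guided by this (and by the classification of the simple $\mathbb{K}B^A(C_q,C_q)$-modules attached to the minimal groups $1$ and $C_q$, which predicts blocks of sizes $m+1$ and $m-1$), I would write out an explicit complete set of matrix units for this corner whose coefficients are rational functions in $t$ with the only possible denominator a power of $t-m$; checking the matrix-unit relations then proves at once that the corner is semisimple for every $t\neq m$ and that it is isomorphic to $\mathrm{Mat}_{m+1}(\mathbb K)\oplus\mathrm{Mat}_{m-1}(\mathbb K)$. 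Combining, $\mathbb{K}_\ell\Lambda^A_{C_q}\cong\mathrm{Mat}_{m+1}(\mathbb K)\oplus\mathrm{Mat}_{m-1}(\mathbb K)\oplus\mathbb{K}^{q-1}$ for all $t\neq m$, which is semisimple; together with the previous paragraph this gives the theorem. (For transcendental $t$, semisimplicity of the corner already follows from Lemma \ref{l3.1} and the dimension count, which can serve as a cross-check.)

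\textbf{Main obstacle.} The technical heart is the last step: writing out the finitely many structure constants involving the axes, the full group and the trivial subgroup while tracking the $(C_q)^\ast$-labels, and guessing the right idempotents. The special value $m$ in the statement is forced by the orthogonality sum $\sum_{\nu\in(C_q)^\ast}\nu$; the real content is to verify that no further cancellation degenerates the corner --- equivalently, that its discriminant, as a polynomial in $t$, is a nonzero constant times a power of $t-m$.
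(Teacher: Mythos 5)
Your dimension count is right, and your ``only if'' direction is correct and is exactly the paper's: for $G=C_q$ the hypothesis of Theorem \ref{t4.7} amounts precisely to $\ell(q)=|(C_q)^\ast|$, so that theorem delivers non-semisimplicity at the critical value. The gap is in the ``if'' direction, which in your write-up is a program rather than a proof: the central idempotent $f$, the matrix units for the complementary corner, and the verification that their denominators involve only $t-m$ are all things you say you \emph{would} construct and check. But that verification \emph{is} the theorem. In the paper the threshold is actually established in Lemma \ref{l3.3}: the span $S^1_{1,1}$ of the subcharacters supported on the subgroups $E\times 1$ is shown to be a simple module of dimension $\rho+1$ exactly when $\lambda=\ell(q)\neq\rho=|(C_q)^\ast|$, by checking that a proper submodule forces the single relation $\lambda=\rho$; the isomorphic copies $S^1_{E,\omega}$ of Lemma \ref{l3.2} then fill out the ideal $S^1\cong\mathrm{Mat}_{\rho+1}(\mathbb{K})$. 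Your sketch never isolates this module or performs this computation, so the special value $m$ remains a heuristic (``forced by the orthogonality sum'') rather than a proved non-degeneracy statement; note also that the degeneration is concentrated entirely in $S^1_{1,1}$, not spread over the whole corner as your discriminant remark suggests.

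For the remaining blocks the paper does not split off $\mathbb{K}^{q-1}$ first. It passes to the essential quotient $\mathbb{K}_\ell\Lambda^{=}_{C_q}$, identifies it with the group algebra $\mathbb{K}[(C_q)^\ast\rtimes\mathrm{Out}(C_q)]$ via \cite[Theorem 4]{OY19}, reads off $\mathrm{Mat}_{q-1}(\mathbb{K})\oplus\mathbb{K}^{q-1}$ from the character table of $C_q\rtimes C_{q-1}$, and then in Section \ref{s6} lifts each simple module of the quotient to an explicit simple submodule of the regular module of the full algebra; the correction terms there have denominators $\lambda$, $\rho$ and $q-1$ (all units in $\mathbb{K}$), and the substantive work is checking that the lifted generators are annihilated by every basis element of trivial thorax. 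The concluding dimension count is the same as yours. So the architecture of your plan is reasonable and the answer is right, but the two computations that constitute the proof of semisimplicity --- the simplicity of $S^1_{1,1}$ away from $\lambda=\rho$, and the explicit lifting of the simples of the essential quotient --- are missing.
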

For clarity, let us say that if $\vert (C_q)^\ast \vert =1$ then the intermediate module in the description of $\mathbb{K}_\ell\Lambda^A_{C_q}$ vanishes.

Given a simple $A$-fibred biset functor $\mathcal{F}$, a group $G$ that is minimal with respect to providing nonzero evaluation, i.e., $\mathcal{F}(G)\not=0$, there is a strong connection between $\mathcal{F}(G)$ and the structure of the \emph{essential algebra} $\bar{E}_G$ of $\mathbb{K}B^A(G,G)$, which is obtained from the elements that do not factor through a group smaller than $G$ (\cite[Proposition 3.7]{BC18}). However, it is still a challenging task to obtain explicit descriptions for $\mathcal{F}(G)$. It is not unreasonable to expect $\mathbb{K}B^A(G,G)$ to be not semisimple for non-abelian $G$ as well, thus its structure is likely to be more complicated than those of its semisimple deformations. Therefore, although we do not have the concrete evidence at the moment, we predict that the semisimple deformations could provide an easier way to uncover the connection between the explicit descriptions of $\mathcal{F}(G)$ and $\bar{E}_G$.

The paper is organized as follows; in Section $2$, we will briefly recall the background material on the fibred biset category and its deformations. In Section $3$, we will provide certain simple $\mathbb{K}_\ell\Lambda^A_G
$-modules that appears for any finite group $G$ when $\ell$ is algebraically independent. The non-semisimplicity of $\mathbb{K}B^A(G,G)$ when $G$ is a finite abelian group will be constructed in Section $4$ whereas Sections $5$ and $6$ are aimed at providing the explicit descriptions for $\mathbb{K}_\ell\Lambda^A_{C_q}$.
\section{The Fibred Biset Category and Its Deformations}\label{s2}
Throughout this paper, let $F,G$ and $H$ denote finite groups, $F^*$ the set of homomorphisms from $F$ to $A$ where $A$ is a multiplicatively written abelian group. All modules are assumed to be left modules. We shall start by a brief discussion on fibred permutation sets, more elaborate one can be found in \cite{B04}. An $A$-\emph{fibred} $G$-\emph{set} $X$ is defined as an $A$-free $A\times G$-set with finitely many $A$-orbits, which are called \emph{fibres}. The set $X$ is \emph{transitive} if $G$ acts transitively on the set of fibres. The transitive $A$-fibred $G$-sets can be associated with the \emph{subcharacters} of $G$; the pairs $(V,\psi)$ where $V\leq G$ and $\psi\in V^{\ast}$. The group $G$ acts on the subcharacters via conjugation, more explicitly, for $g\in G$, $^{g}(V,\psi)=(^{g}V,^{g}\psi)$ where  $^{g}\psi(^{g}v)=\psi(v)$ for all $v\in V$.

There is a bijection between the isomorphism classes of the transitive $A$-fibred $G$-sets and the $G$-conjugacy classes of subcharacters; given a transitive $A$-fibred $G$-set $X$, it can be associated with a subcharacter $(V,\psi)$ where $V$ is the stabilizer of a fibre in $X$ and $vx=\psi(v)x$ for all $v\in V$. Two transitive $A$-fibred $G$-sets are isomorphic if and only if the associated subcharacters are $G$-conjugate. The brackets notation $\left[ X\right]$ will be used to denote the isomorphism class of an $A$-fibred $G$-set $X$.

Our interest lies in the two sided constructions, in particular the $A$-fibred $F\times G$-sets, which we call $A$-\emph{fibred} $F$-$G$-\emph{bisets}. The isomorphism class of a transitive $A$-fibred $F$-$G$-biset associated with the subcharacter $(U,\varphi)$ is denoted by $\left[\dfrac{F\times G}{U,\varphi}\right]$.
In order to understand the fibred bisets more thoroughly, we shall recall the classification of the subgroups of direct product of groups. Let $U\leq F\times G$ and set the following notation for the \emph{projection} and the \emph{kernel} subgroups associated with $U$:
\begin{align*}
p_1(U)&=\lbrace u_1\in F\vert \,\exists \,  u_2\in G, u_1\times u_2\in U\rbrace, \,\,\,\,\, p_2(U)=\lbrace u_2\in G\vert \,\exists \,  u_1\in F, u_1\times u_2\in U\rbrace,\\
k_1(U)&=\lbrace u_1\in F\vert  u_1\times1\in U\rbrace ,\,\,\,\,\, k_2(U)=\lbrace u_2\in G\vert 1\times u_2\in U\rbrace .
\end{align*}
The following theorem is well-known and gives a complete description of the subgroups of $F\times G$.
\begin{theorem}[Goursat]
The subgroups $U$ of $F\times G$ are in bijective correspondence with the quintuples $(A,B,\kappa, C,D)$ where $ B\unlhd A\leq F,\,\, C\unlhd D \leq G$ and $\kappa: A/B\mapsto D/C$ is an isomorphism. The correspondence is determined by the conditions that 
\begin{align*}
(A,B,\kappa, C,D)=(p_1(U),k_1(U),\kappa,k_2(U),p_2(U))
\end{align*}
and for every $u_1\times u_2\in U$, we have $\kappa(u_1k_1(U))=u_2k_2(U)$.
\end{theorem}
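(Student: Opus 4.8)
\emph{Proof sketch.} Since this is the classical Goursat correspondence, the plan is simply to write down the two mutually inverse assignments and check that each behaves as claimed. In one direction, given a subgroup $U\leq F\times G$, I would put $A=p_1(U)$, $B=k_1(U)$, $C=k_2(U)$, $D=p_2(U)$, and define $\kappa:A/B\to D/C$ by $\kappa(u_1k_1(U))=u_2k_2(U)$ whenever $u_1\times u_2\in U$. The first point to verify is the normality statements: for $a\in A$ choose $d\in G$ with $a\times d\in U$; then for every $b\in B$ one has $(a\times d)(b\times 1)(a\times d)^{-1}=(aba^{-1})\times 1\in U$, so $aba^{-1}\in B$, giving $B\unlhd A$, and $C\unlhd D$ follows symmetrically.

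Next I would check that $\kappa$ is a well-defined isomorphism. It is well-defined because if $u_1\times u_2$ and $u_1\times u_2'$ both lie in $U$ then $1\times(u_2^{-1}u_2')\in U$, so $u_2^{-1}u_2'\in k_2(U)=C$ and the coset $u_2C$ is independent of the chosen representative; the existence of at least one such $u_2$ is exactly the definition of $A=p_1(U)$. That $\kappa$ is a homomorphism is immediate from multiplying two elements of $U$, surjectivity is the definition of $D=p_2(U)$, and injectivity follows because $\kappa(u_1B)=C$ forces a representative $u_1\times u_2\in U$ with $u_2\in C=k_2(U)$, hence $1\times u_2\in U$, hence $u_1\times 1\in U$, i.e. $u_1\in k_1(U)=B$.

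For the reverse assignment, given a quintuple $(A,B,\kappa,C,D)$ satisfying the hypotheses I would set $U=\{\,u_1\times u_2 : u_1\in A,\ u_2\in D,\ \kappa(u_1B)=u_2C\,\}$. This is a subgroup: it contains $1\times 1$ because $\kappa(B)=C$, and closure under products and inverses is immediate since $\kappa$ is a homomorphism. One then reads off $p_1(U)=A$ (each $a\in A$ pairs with any $d\in D$ representing the coset $\kappa(aB)\in D/C$, and conversely $U\subseteq A\times D$), $k_1(U)=B$ (since $a\times 1\in U$ holds exactly when $\kappa(aB)=C$, i.e.\ by injectivity of $\kappa$ when $a\in B$), and symmetrically $p_2(U)=D$, $k_2(U)=C$; moreover the isomorphism $A/B\to D/C$ recovered from $U$ by the recipe of the first paragraph is $aB\mapsto\kappa(aB)$, namely $\kappa$ itself.

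Finally I would confirm the two assignments are mutually inverse. Starting from a subgroup $U$, passing to its quintuple, and re-forming a subgroup yields precisely the set of $u_1\times u_2\in U$ — that is, $U$ — by the defining relation $\kappa(u_1k_1(U))=u_2k_2(U)$; starting from a quintuple and going back recovers the same five pieces of data by the computations above. The whole argument is elementary and presents no genuine obstacle; the only point that needs attention is identifying the kernels $k_1(U)$ and $k_2(U)$ correctly, so that $\kappa$ is honestly valued in $D/C$ rather than in some larger quotient, and this is what the explicit description of $k_i(U)$ at the outset secures.
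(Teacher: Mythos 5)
Your proof is correct and is the standard argument for Goursat's lemma; the paper itself states the theorem without proof, citing it as well-known, so there is nothing in the source to diverge from. The only point worth tightening is well-definedness of $\kappa$ in the first variable as well as the second: if $u_1B=u_1'B$ with $u_1'=u_1b$, $b\in B$, then $u_1'\times u_2=(u_1\times u_2)(b\times1)\in U$, after which your check that the second coordinate is determined modulo $C$ finishes the verification. Everything else — normality, the homomorphism and bijectivity of $\kappa$, the reverse construction, and the mutual inverseness — is handled correctly.
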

Given subgroups $U\leq F\times G$ and $V\leq G\times H$, their \emph{star product}, denoted by $U\ast V$, is defined as
\begin{align*}
U\ast V=\lbrace u_1\times v_2\in p_1(U)\times p_2(V)\vert \exists g\in p_2(U)\cap p_1(V),\,u_1\times g\in U, g\times v_2\in V\rbrace.
\end{align*}
We also set $p_1(U)/k_1(U)\cong q(U)\cong p_2(U)/k_2(U) $. The invariant $q(U)$ of $U$ is called the \emph{thorax} of $U$. Given a finite group $N$, we write $\left[ q(U)\right]\geq\left[N\right]$ if $N$ is isomorphic to a subquotient of $q(U)$. The following is easy to prove and provides useful insight to the structure of the star product.
\begin{prop}\cite[Corollary 2.6]{mfa}\label{p2.2}
Letting $U$ and $V$ be as above and $U\ast V=W$, we have $$\left[ q(U)\right]\geq\left[q(W)\right]\leq \left[q(V)\right].$$
\end{prop}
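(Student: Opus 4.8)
The plan is to bypass any detailed dissection of the star product: the statement will follow at once from two coarse inclusions of projection and kernel subgroups forced by the definition of $U\ast V$, combined with Goursat's theorem applied to $W=U\ast V$ itself. First I would record that, by the very description of the Goursat data, $q(W)\cong p_1(W)/k_1(W)\cong p_2(W)/k_2(W)$ with $k_1(W)\unlhd p_1(W)$ and $k_2(W)\unlhd p_2(W)$, and likewise for $U$ and $V$. By the left--right symmetry of the set-up it then suffices to establish $\left[q(U)\right]\geq\left[q(W)\right]$; the inequality $\left[q(W)\right]\leq\left[q(V)\right]$ comes from the mirror-image argument, working with $p_2,k_2$ and $V$ in place of $p_1,k_1$ and $U$ (where one needs $k_2(V)\subseteq k_2(W)$ and $p_2(W)\subseteq p_2(V)$, checked in the same way as below).

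Next come the two key containments for $U$. Directly from
\begin{align*}
U\ast V=\lbrace u_1\times v_2\in p_1(U)\times p_2(V)\mid \exists\, g\in p_2(U)\cap p_1(V),\ u_1\times g\in U,\ g\times v_2\in V\rbrace
\end{align*}
one reads off $p_1(W)\subseteq p_1(U)$. For the kernel, if $u_1\times 1\in U$ then choosing $g=1$ (which lies in $p_2(U)\cap p_1(V)$ and satisfies $1\times 1\in V$) shows $u_1\times 1\in W$, so $k_1(U)\subseteq k_1(W)$. Hence
\begin{align*}
k_1(U)\subseteq k_1(W)\subseteq p_1(W)\subseteq p_1(U),
\end{align*}
with $k_1(U)\unlhd p_1(U)$ by Goursat for $U$ and $k_1(W)\unlhd p_1(W)$ by Goursat for $W$.

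The structural conclusion is now immediate. The quotient $p_1(W)/k_1(U)$ is a subgroup of $q(U)=p_1(U)/k_1(U)$, the subgroup $k_1(W)/k_1(U)$ is normal in it, and by the third isomorphism theorem the corresponding quotient is isomorphic to $p_1(W)/k_1(W)\cong q(W)$. Thus $q(W)$ is a subquotient of $q(U)$, i.e. $\left[q(U)\right]\geq\left[q(W)\right]$, and symmetrically $\left[q(W)\right]\leq\left[q(V)\right]$. I do not expect a genuine obstacle here: the only point needing attention is keeping straight which subgroups are normal in which, and the entire content of the argument is the observation that the crude inclusions of projections and kernels — rather than any explicit computation of $U\ast V$ — already suffice.
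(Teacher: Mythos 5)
Your proposal is correct: the containments $k_1(U)\subseteq k_1(W)\subseteq p_1(W)\subseteq p_1(U)$ (and their mirror images for $V$) follow exactly as you say from the definition of $U\ast V$ by taking $g=1$, and the third isomorphism theorem then exhibits $q(W)$ as a subquotient of $q(U)$ and of $q(V)$. The paper itself omits the proof, deferring to \cite[Corollary 2.6]{mfa}, and your argument is essentially the standard one used there, so nothing further is needed.
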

When taking the $\ast$-product with subgroups that are of the direct product form, we have the following descriptions regarding the structure of resulting projections.
\begin{lemm}\label{l2.3}
Let $G$ be a finite group, $U\leq G\times G$ and $M\leq G\geq K$. Then
\begin{align*}
\dfrac{p_1(U\ast (M\times K))}{k_1(U)}\cong \dfrac{p_2(U)\cap M}{k_2(U)\cap M}.
\end{align*}
\end{lemm}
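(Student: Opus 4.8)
The plan is to compute $p_1(U\ast(M\times K))$ directly from the definition of the star product and then translate it through the Goursat data of $U$. First I would unwind the definition with $V=M\times K$, where $p_1(V)=M$ and $p_2(V)=K$: the requirement $g\times v_2\in M\times K$ splits into $g\in M$ and $v_2\in K$, and both turn out to be redundant — the first because $g$ is already constrained to $p_2(U)\cap M$, the second because $v_2$ freely ranges over $K$. This should yield
\begin{align*}
p_1(U\ast(M\times K))=\{u_1\in p_1(U)\mid \exists\, g\in p_2(U)\cap M,\ u_1\times g\in U\},
\end{align*}
so that the left-hand side is the set of $u_1$ whose ``$U$-partners'' can be chosen inside $M$.

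Next I would bring in the Goursat isomorphism $\kappa\colon p_1(U)/k_1(U)\to p_2(U)/k_2(U)$, for which $u_1\times g\in U$ iff $\kappa(u_1k_1(U))=g\,k_2(U)$. Then $u_1$ lies in the set above exactly when the coset $\kappa(u_1k_1(U))$, viewed as a subset of $p_2(U)$, meets $M$. The key observation is that the collection
\begin{align*}
S=\{xk_2(U)\in p_2(U)/k_2(U)\mid xk_2(U)\cap M\neq\varnothing\}
\end{align*}
is nothing but the image of $p_2(U)\cap M$ under the quotient map $\rho\colon p_2(U)\to p_2(U)/k_2(U)$ (since $k_2(U)\le p_2(U)$, a coset of $k_2(U)$ meets $M$ iff it meets $p_2(U)\cap M$). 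Hence $S$ is a subgroup, and applying the first isomorphism theorem to $\rho|_{p_2(U)\cap M}$, whose kernel is $k_2(U)\cap M$, gives $S\cong (p_2(U)\cap M)/(k_2(U)\cap M)$.

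To finish, I would note that the displayed formula identifies $p_1(U\ast(M\times K))$ with the full preimage of $\kappa^{-1}(S)$ under the quotient map $p_1(U)\to p_1(U)/k_1(U)$; in particular it is a subgroup containing $k_1(U)$, and
\begin{align*}
\dfrac{p_1(U\ast(M\times K))}{k_1(U)}=\kappa^{-1}(S)\cong S\cong\dfrac{p_2(U)\cap M}{k_2(U)\cap M},
\end{align*}
the middle isomorphism being the restriction of $\kappa$. Chaining these gives the claim. The only step requiring genuine care is the middle one — seeing that $S$, a priori merely a set of cosets, is in fact a subgroup of $p_2(U)/k_2(U)$ and computing it; I expect this to be the main (small) obstacle, and it is exactly what the identification $S=\rho(p_2(U)\cap M)$ resolves. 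Everything else is routine bookkeeping with the star-product definition and the Goursat correspondence.
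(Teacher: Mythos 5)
Your proof is correct and is essentially the paper's argument: the paper simply writes down the surjection $f\colon p_2(U)\cap M\to p_1(U\ast(M\times K))/k_1(U)$, $m\mapsto uk_1(U)$ with $u\times m\in U$, and notes $\ker f=k_2(U)\cap M$, which is exactly the isomorphism you build (read in the other direction) by passing through Goursat. Your version just makes explicit the well-definedness and the identification of $p_1(U\ast(M\times K))$ as a preimage, which the paper leaves to the reader.
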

\begin{proof}
Consider the surjection $f:p_2(U)\cap M\rightarrow \dfrac{p_1(U\ast(M\times K))}{k_1(U)}$ given by $f(m)=uk_1(U)$ where $u\in p_1(U)$ is one of the elements that satisfy $u\times m\in U.$ The result follows from  the fact that $\mathrm{ker}f=k_2(U)\cap M$.
\end{proof}
\begin{lemm}\label{l4.6}
Let $G$ be a finite group, $U\leq G\times G$, $M\leq N\leq G$ where $|N:M|=p$ for a prime $p$. If $p_1(U\ast(M\times 1))< p_1(U\ast (N\times 1))$, then $k_2(U)\cap M= k_2(U)\cap N$.
\end{lemm}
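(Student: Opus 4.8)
The plan is to work with the two auxiliary surjections supplied by Lemma \ref{l2.3}, one for the subgroup $M\times 1$ and one for $N\times 1$. Concretely, from Lemma \ref{l2.3} (taking $K=1$) we have isomorphisms
\begin{align*}
\frac{p_1(U\ast(M\times 1))}{k_1(U)}\cong\frac{p_2(U)\cap M}{k_2(U)\cap M},\qquad
\frac{p_1(U\ast(N\times 1))}{k_1(U)}\cong\frac{p_2(U)\cap N}{k_2(U)\cap N}.
\end{align*}
The hypothesis $p_1(U\ast(M\times 1))<p_1(U\ast(N\times 1))$ then translates into a strict inclusion of the right-hand quotients (after identifying the first with a subgroup of the second via the inclusion $M\leq N$), so $[p_2(U)\cap N:k_2(U)\cap N]>[p_2(U)\cap M:k_2(U)\cap M]$. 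I would like to conclude that the ``kernel part'' $k_2(U)\cap M$ cannot drop below $k_2(U)\cap N$, i.e.\ that $k_2(U)\cap M=k_2(U)\cap N$.

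The first step is to record the elementary group-theoretic fact that, since $M\leq N$, we always have $k_2(U)\cap M\leq k_2(U)\cap N$ and $p_2(U)\cap M\leq p_2(U)\cap N$, and that $(p_2(U)\cap M)(k_2(U)\cap N)\leq p_2(U)\cap N$ with $(p_2(U)\cap M)\cap(k_2(U)\cap N)=k_2(U)\cap M$ whenever $k_2(U)\leq p_2(U)$ normalizes things appropriately—here $k_2(U)\unlhd p_2(U)$ by Goursat, so this intersection really is $k_2(U)\cap M\cap N=k_2(U)\cap M$. The second step is the index computation: write the chain $k_2(U)\cap M\leq k_2(U)\cap N\leq p_2(U)\cap N$ and $k_2(U)\cap M\leq p_2(U)\cap M\leq p_2(U)\cap N$, and use $[N:M]=p$ to bound the relevant indices. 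Since $p_2(U)\cap N\leq N$ and $p_2(U)\cap M=(p_2(U)\cap N)\cap M$, the index $[p_2(U)\cap N:p_2(U)\cap M]$ divides $[N:M]=p$, hence is $1$ or $p$; similarly $[k_2(U)\cap N:k_2(U)\cap M]$ divides $p$. Putting $a=[k_2(U)\cap N:k_2(U)\cap M]\in\{1,p\}$ and $b=[p_2(U)\cap N:p_2(U)\cap M]\in\{1,p\}$, the multiplicativity of indices along $k_2(U)\cap M\leq p_2(U)\cap M\leq p_2(U)\cap N$ versus $k_2(U)\cap M\leq k_2(U)\cap N\leq p_2(U)\cap N$ gives
\begin{align*}
\frac{[p_2(U)\cap N:k_2(U)\cap N]}{[p_2(U)\cap M:k_2(U)\cap M]}=\frac{b}{a}.
\end{align*}
The strict-inclusion hypothesis forces the left side to be strictly greater than $1$, so $b/a>1$, which is only possible if $b=p$ and $a=1$; that is exactly $k_2(U)\cap M=k_2(U)\cap N$, as desired.

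The main obstacle I anticipate is purely bookkeeping: making sure the identification of $p_1(U\ast(M\times 1))/k_1(U)$ as a \emph{subgroup} of $p_1(U\ast(N\times 1))/k_1(U)$ is the one compatible with the isomorphisms of Lemma \ref{l2.3}, so that the strict inclusion genuinely passes to $(p_2(U)\cap M)/(k_2(U)\cap M)\hookrightarrow(p_2(U)\cap N)/(k_2(U)\cap N)$ being proper rather than merely the two finite groups having different orders for some unrelated reason. This should follow by inspecting the surjection $f$ built in the proof of Lemma \ref{l2.3}: the map for $M$ is the restriction of the map for $N$, because $u\times m\in U$ with $m\in M\subseteq N$ witnesses membership for both, so the square commutes on the nose and the inclusion is the natural one. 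Once that compatibility is in hand, the index arithmetic above is routine and the lemma follows.
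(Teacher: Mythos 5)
Your argument is correct and is essentially the paper's own proof: both rest on the isomorphism of Lemma \ref{l2.3} applied to $M\times 1$ and $N\times 1$ together with the bound $|T:T\cap M|\le |N:M|=p$, and your ratio $b/a$ just packages the paper's two case eliminations ($p_2(U)\cap N=p_2(U)\cap M$ is impossible, and $|(k_2(U)\cap N):(k_2(U)\cap M)|=p$ is impossible) into one line. One shared caveat: for general (non-abelian) $G$ the index $[T:T\cap M]$ need only satisfy $\le p$ rather than divide $p$ (e.g.\ $T=\langle(13)\rangle$, $M=\langle(12)\rangle$ in $S_3$), so your claim that $a,b\in\{1,p\}$ --- which the paper's proof also tacitly uses --- strictly requires an extra word, though it is automatic in the abelian setting where the lemma is actually applied.
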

\begin{proof}
For any $T\leq N$, we have $|T:(T\cap M)|\leq|N:M|=p$ and substituting $p_2(U)\cap N$ for $T$ yields $|(p_2(U)\cap N):(p_2(U)\cap M)|\leq p$. Suppose that $p_2(U)\cap N=p_2(U)\cap M$. Since $p_1(U\ast(M\times 1))< p_1(U\ast (N\times 1))$, Lemma \ref{l2.3} implies $k_2(U)\cap N<k_2(U)\cap M$ which is not possible, so $|(p_2(U)\cap N):(p_2(U)\cap M)|= p $. Now let $|(k_2(U)\cap N):(k_2(U)\cap M)|=p$, but then $\bigg\lvert\dfrac{p_2(U)\cap M}{k_2(U)\cap M}\bigg\rvert=\bigg\lvert\dfrac{p_2(U)\cap N}{k_2(U)\cap N}\bigg\rvert$, which again is a contradiction by Lemma \ref{l2.3}.
\end{proof}
All the algebras that we will introduce have a categorical background, so let us elaborate on this identification. Given a small $\mathbb{K}$-linear category $\mathcal{C}$, we can realize it as a unital algebra $\mathcal{C}_\Lambda$ by setting $\mathcal{C}_\Lambda=\bigoplus_{X,Y\in\mathrm{obj}(\mathcal{C})}\mathcal{C}(X,Y)$ (where $\mathcal{C}(X,Y)$ will always denote the set of morphisms $X\leftarrow Y$) and taking the multiplication of two morphisms to be their composition when they are composable and in the case where this fails, we  let the product be zero. The identity element of $\mathcal{C}_\Lambda$ is given by $\sum_{X\in\mathrm{obj}(\mathcal{C})}\mathrm{id}_X $ where $\mathrm{id_X}$ denotes the identity morphism on $X$. 

The \emph{Burnside group} of $A$-fibred $F$-$G$-bisets $B^{A}(F,G)$ is defined as the Grothendieck group of the isomorphism classes of $A$-fibred $F$-$G$-bisets with respect to disjoint union. The $\mathbb{K}$-\emph{linear fibred biset category} $\mathbb{K}B^A$ is the category where the objects are the finite groups, the morphism sets $\mathbb{K}B^{A}(F,G)$ are given by 
\begin{align*}
\mathbb{K}B^A(F,G)=\mathbb{K}\otimes B^A(F,G)
\end{align*}
and the composition is the $\mathbb{K}$-linear extension of the tensor product of $A$-fibred bisets, that is, for $\left[X\right]\in B^A(F,G),\, \left[Y\right]\in B^A(G,H)$ we have
\begin{align*}
\left[X\right]\left[Y\right]=\left[X\otimes_{AG} Y\right].
\end{align*}
An explicit formula for the composition above was given in \cite{BC18}. We will state it in the notation of \cite{BO20}. To set this up, we let the two subcharacters $(U,\varphi)$ and $(V,\psi)$ of $F\times G$ and $G\times H$, respectively, be \emph{related} if and only if $\varphi(1\times g)\psi(g\times 1)=1$ for all $g\in k_2(U)\cap k_1(V)$, in which case we also write ${(U,\varphi)} \sim(V,\psi)$ and define the $\ast$-product of the subcharacters as 
\begin{align*}
(U,\varphi)\ast(V,\psi)=\begin{cases}
   (U\ast V,\varphi\ast \psi) & \text{if } (U,\varphi)\sim (V,\psi) \\
    0 & \text{otherwise }  
  \end{cases}
\end{align*}
where given $u_1\times v_2 \in U\ast V$, $(\varphi\ast\psi)(u_1\times v_2)=\varphi(u_1\times g)\psi(g\times v_2)$ for some $g$ satisfying $u_1\times g\in U, g\times v_2\in V$.
The associativity of the $\ast$-product of subcharacters has been established by \cite[Proposition 4.3]{BO20}. 
\begin{theorem}{\cite[Corollary 2.5]{BC18}}\label{tmackey}
Let $(U,\varphi)$ and $(V,\psi)$ be subcharacters of $F\times G$ and $G\times H$, respectively. Then
\begin{align*}
\left[\dfrac{F\times G}{U,\varphi}\right]\otimes_{AG}\left[\dfrac{G\times H}{V,\psi}\right]=\sum_g\left[\dfrac{F\times G}{U\ast ^{g}V,\varphi\ast ^{g}\psi}\right]
\end{align*}
where $g$ is running over the double coset representatives satisfying $p_2(U)gp_1(V)\subseteq G$ with $(U,\varphi)\sim {{}^{g}(V,\psi)}$.
\end{theorem}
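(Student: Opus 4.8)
The plan is to reduce the statement to the ordinary (non-fibred) biset Mackey formula applied to the underlying sets of fibres, and then to keep track of the $A$-valued twist carried by each transitive constituent that arises.

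First I would fix explicit transitive models. Represent $X:=\left[\frac{F\times G}{U,\varphi}\right]$ so that its set $X/A$ of fibres is $(F\times G)$-isomorphic to $(F\times G)/U$, with a chosen base fibre $x_0A$ corresponding to the coset $U$ and satisfying $u_1x_0u_2^{-1}=\varphi(u_1\times u_2)\,x_0$ for all $(u_1,u_2)\in U$; likewise write $Y:=\left[\frac{G\times H}{V,\psi}\right]$ with base fibre $y_0A$ and $v_1y_0v_2^{-1}=\psi(v_1\times v_2)\,y_0$ for $(v_1,v_2)\in V$. Passing to $A$-orbits turns $\otimes_{AG}$ into the ordinary composition: the fibre set of $X\otimes_{AG}Y$ is $(X/A)\times_G(Y/A)\cong(F\times G)/U\times_G(G\times H)/V$, so Goursat's theorem together with the ordinary biset Mackey formula \cite{BOU96} identifies it, as an $F$-$H$-set, with $\coprod_g(F\times H)/(U\ast{}^gV)$, the union running over a set of double coset representatives $g$ with $p_2(U)gp_1(V)\subseteq G$, the $g$-th orbit being represented by the image $\xi_g$ of $x_0\otimes gy_0$ (compare Proposition~\ref{p2.2}). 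This pins down which subgroups can occur and the fibre-stabilizer attached to each.

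Next, for a fixed $g$ I would compute the twist on the $g$-th constituent. Given $(w_1,w_2)\in U\ast{}^gV$, choose a \emph{witness} $t\in p_2(U)\cap{}^gp_1(V)$ with $(w_1,t)\in U$ and $(t,w_2)\in{}^gV$. Substituting the defining relation $w_1x_0=\varphi(w_1\times t)\,x_0t$ in $X$, the relation $(g^{-1}tg)\,y_0w_2^{-1}=\psi(g^{-1}tg\times w_2)\,y_0={}^g\psi(t\times w_2)\,y_0$ in $Y$, and the amalgamation $x_0t\otimes y=x_0\otimes ty$ over $G$, a short computation gives
\begin{align*}
w_1\,\xi_g\,w_2^{-1}=\varphi(w_1\times t)\,{}^g\psi(t\times w_2)\,\xi_g=(\varphi\ast{}^g\psi)(w_1\times w_2)\,\xi_g .
\end{align*}
Hence, as soon as the right-hand assignment is a genuine homomorphism on $U\ast{}^gV$, the $g$-th constituent of $X\otimes_{AG}Y$ is exactly $\left[\frac{F\times H}{U\ast{}^gV,\varphi\ast{}^g\psi}\right]$. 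Two witnesses $t,t'$ for the same $(w_1,w_2)$ differ by an element $s:=t^{-1}t'\in k_2(U)\cap k_1({}^gV)$, and the value above is independent of the witness exactly when $\varphi(1\times s)\,{}^g\psi(s\times1)=1$ for every such $s$, i.e.\ exactly when $(U,\varphi)\sim{}^g(V,\psi)$. If this fails, the displayed identity with $w_1=w_2=1$ and witness $s\neq1$ exhibits a nontrivial element of $A$ fixing $\xi_g$; the $g$-th constituent is then not $A$-free, and over the characteristic-zero field $\mathbb{K}$ the corresponding monomial module is annihilated, so that value of $g$ contributes nothing. Summing the surviving constituents over the related double cosets produces the asserted formula.

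The step I expect to be the real work is the last one: showing, on the nose, that the failure of well-definedness of $\varphi\ast{}^g\psi$ coincides with the breakdown of $A$-freeness of the $g$-th constituent, so that precisely the related double cosets survive. The reduction of the preceding step --- checking that passing to $A$-orbits genuinely converts $\otimes_{AG}$ into the ordinary composition, so that Bouc's formula governs the indexing --- and the verification that the chosen witnesses make the stated identities hold are more routine, but must be carried out with care.
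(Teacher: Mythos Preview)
The paper does not supply a proof of this statement; it is quoted from \cite[Corollary~2.5]{BC18} as background, with no argument given. So there is no paper's proof to compare against. Your outline is the standard route to this Mackey formula and is essentially correct: pass to $A$-orbits to invoke the ordinary biset Mackey decomposition on $(X/A)\times_G(Y/A)$, then compute the $A$-twist on each transitive piece via a witness element, and finally identify which constituents survive.

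The one place to tighten is the last step. Your computation with $w_1=w_2=1$ and a nontrivial witness $s\in k_2(U)\cap k_1({}^gV)$ correctly exhibits a nontrivial $a\in A$ fixing $\xi_g$ precisely when $(U,\varphi)\not\sim{}^g(V,\psi)$; that is the key observation. But the reason this constituent then contributes zero is not a characteristic-zero phenomenon and does not involve any ``monomial module''. By definition an $A$-fibred biset is $A$-free, so the tensor product $\otimes_{AG}$ in \cite{BC18} is set up to retain only the $A$-free part of the naive amalgamation; the non-free orbits are simply discarded at the level of sets. The identity therefore already holds in the integral Grothendieck group $B^A(F,H)$, before any extension of scalars to $\mathbb{K}$. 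Replace the appeal to $\mathbb{K}$ by the direct observation that non-$A$-free orbits are removed by the definition of the fibred tensor, and the argument is complete.
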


Given a set $\mathcal{K}$ of finite groups, the $\mathbb{K}$-\emph{linear subcharacter algebra} on $\mathcal{K}$, $\mathbb{K}\Lambda^{A}_\mathcal{K}$, is defined as the $\mathbb{K}$-algebra where the elements are given by the subcharacters of $F\times G$ for all $F,G\in\mathcal{K}$, and the multiplication is the $\mathbb{K}$-linear extension of the $\ast$-product of subcharacters. The identity element of $\mathbb{K}\Lambda^A_\mathcal{K}$ is given as $\sum_{G\in\mathcal{K}}(\Delta(G),1)$ where $\Delta(G)$ denotes the diagonal subgroup $\lbrace (g\times g)\vert g\in G\rbrace$ and $1$ is its trivial character. To denote the elements of $\mathbb{K}\Lambda^{A}_\mathcal{K}$, we shall alter our notation slightly and use $s^{F,G}_{U,\varphi}$ for the subcharacter $(U,\varphi)$ of $F\times G$. To keep everything more neat, we will often be omitting the character $\varphi$ from the notation if the group to which it belongs is trivial. We shall also make use of the decomposition $\varphi|_{{k_1(U)\times k_2(U)}}=\varphi_1\times\varphi_2$ where $\varphi_1\in (k_1(U))^\ast,\,\varphi_2\in (k_2(U))^\ast$. For any $E\leq G$ and $\psi\in E^\ast$, we let $\psi^{-1}\in E^\ast$ be its inverse in $E^\ast$. Notice that $(1\times E,\psi^{-1})\sim (E\times 1,\psi)$. 

Letting $\ell: \mathbb{N}^+\rightarrow \mathbb{K}^{\times}$ be a monoid homomorphism, the $\ell$-\emph{deformed fibred biset category} $\mathbb{K}_{\ell}B^A$ is defined as the category where the objects are the finite groups, the morphisms are given by $\mathbb{K}_\ell B^A(F,G)=\mathbb{K}B^A(F,G)$ and the composition is defined as

\begin{align*}
\left[\dfrac{F\times G}{U,\varphi}\right]\otimes_{{}_\ell AG}\left[\dfrac{G\times H}{V,\psi}\right]=\sum_g \dfrac{\ell(k_2(U)\cap k_1(V))}{\vert k_2(U)\cap k_1(V)\vert}\left[\dfrac{F\times G}{U\ast ^{g}V,\varphi\ast ^{g}\psi}\right]
\end{align*}
where $g$ runs over the same elements as in Theorem \ref{tmackey}. For any finite group $G$, we define $\ell (G)=\ell(\vert G\vert)$. Notice that when $\ell$ is the inclusion of the natural numbers, we recover $\mathbb{K}B^A$.

Similarly, we define the $\ell$-\emph{deformed subcharacter algebra} on $\mathcal{K}$, $\mathbb{K}_\ell \Lambda ^A_{\mathcal{K}}$, as the algebra satisfying $\mathbb{K}_\ell \Lambda ^A_\mathcal{K}=\mathbb{K}\Lambda ^A_\mathcal{K}$ as sets, which has its multiplication given by
\begin{align*}
s^{F,G}_{U,\varphi}s^{G,H}_{V,\psi}=\begin{cases}
   s^{F,H}_{U\ast V,\varphi\ast \psi}\ell(k_2(U)\cap k_1(V)) & \text{if } (U,\varphi)\sim (V,\psi) \\
    0 & \text{otherwise }  
  \end{cases}.
\end{align*}
The associativity of both of these compositions has been established by  \cite[Theorem 5.1]{BO20}.

Now let $_{F\times G}\Lambda ^A(F,G)$ denote the set of representatives of the $F\times G$-conjugacy classes of the subcharacters of $F\times G$ and $\overline{\mathbb{K}_\ell \Lambda ^A _\mathcal{K}}$ the $\mathbb{K}$-algebra  on a set $\left\lbrace  s^{F,G}_{\left[U,\varphi\right]^+} \right\rbrace  _{\substack{F,G\in \mathcal{K},\\ \left[U,\varphi\right]\in _{F\times G}\Lambda ^A(F,G)} }$ where $s^{F,G}_{\left[U,\varphi\right]^+}$ is the $F\times G$-orbit sum, i.e., $s^{F,G}_{\left[U,\varphi\right]^+}=\sum_{f\in F, G\in G}s^{F,G}_{{}^{f\times g}(U,\varphi)}$.
 The following can be seen as a corollary of \cite[Theorem 5.1]{BO20}  which assumes $\mathcal{K}$ to be all finite groups instead of a set.
\begin{corol}{\cite[Theorem 5.1]{BO20}}
Let $\mathcal{K}$ be a set of finite groups. The algebra $\mathbb{K}_\ell B^A_\mathcal{K}$ can be realized as a subalgebra of $\mathbb{K}_\ell \Lambda ^A_\mathcal{K}$ that is invariant under the $F\times G$-action, i.e., as $\mathbb{K}$-algebras, we have $$\overline{\mathbb{K}_\ell \Lambda ^A_\mathcal{K}}\cong\mathbb{K}_\ell B^A_\mathcal{K}.$$
\end{corol}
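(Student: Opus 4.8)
The plan is to obtain the stated version for a \emph{set} $\mathcal{K}$ by restricting, along the system of identity morphisms of the objects in $\mathcal{K}$, the isomorphism of \cite[Theorem 5.1]{BO20}, which is stated for the class $\mathcal{G}$ of all finite groups. Let $\Phi\colon \overline{\mathbb{K}_\ell \Lambda^A_\mathcal{G}}\to\mathbb{K}_\ell B^A_\mathcal{G}$ denote that isomorphism of $\mathbb{K}$-algebras. First I would record that $\Phi$ is induced by an equivalence of $\mathbb{K}$-linear categories: it is multiplicative and preserves the identity morphisms $\mathrm{id}_G$ (the diagonal orbit sum $s^{G,G}_{[\Delta(G),1]^+}$ on one side and $[\tfrac{G\times G}{\Delta(G),1}]$ on the other), hence it respects the decomposition of both algebras into the morphism spaces indexed by pairs of objects; in particular, for every pair $F,G$ it carries the summand spanned by the $F\times G$-orbit sums $s^{F,G}_{[U,\varphi]^+}$ onto $\mathbb{K}_\ell B^A(F,G)$.

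Next I would observe that, for a set $\mathcal{K}\subseteq\mathcal{G}$, each of $\mathbb{K}_\ell\Lambda^A_\mathcal{K}$, $\overline{\mathbb{K}_\ell\Lambda^A_\mathcal{K}}$ and $\mathbb{K}_\ell B^A_\mathcal{K}$ is, directly from its definition, the $\mathbb{K}$-span inside its $\mathcal{G}$-counterpart of those basis elements indexed by pairs $(F,G)$ with $F,G\in\mathcal{K}$; equivalently each equals the cut-down $e_\mathcal{K}(-)e_\mathcal{K}$ by the locally finite sum $e_\mathcal{K}=\sum_{G\in\mathcal{K}}\mathrm{id}_G$ of identity morphisms. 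The key point is that these spans are closed under multiplication, hence are unital subalgebras: $s^{F,G}_{U,\varphi}\,s^{G',H}_{V,\psi}$ vanishes unless $G=G'$, and when $G=G'$ it is a scalar multiple of $s^{F,H}_{U\ast V,\varphi\ast\psi}$, a subcharacter of $F\times H$ with $F,H\in\mathcal{K}$; the analogous statement for $A$-fibred bisets is Theorem \ref{tmackey}, and for orbit sums it follows because the conjugation actions commute with the $\ast$-product in the graded fashion already used in \cite{BO20} to see that $\overline{\mathbb{K}_\ell\Lambda^A_\mathcal{G}}$ is a subalgebra. Thus $\mathbb{K}_\ell\Lambda^A_\mathcal{K}$ is unital with identity $\sum_{G\in\mathcal{K}}(\Delta(G),1)$, and likewise for the other two; and the invariance assertion, that $\overline{\mathbb{K}_\ell\Lambda^A_\mathcal{K}}$ is exactly the subalgebra of $F\times G$-fixed elements of $\mathbb{K}_\ell\Lambda^A_\mathcal{K}$, is immediate since a combination of subcharacters of $F\times G$ is fixed by all $F\times G$-conjugations if and only if it is a combination of orbit sums.

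Finally, since $\Phi$ is graded by pairs of objects and fixes every $\mathrm{id}_G$, it intertwines the cut-downs by $e_\mathcal{K}$ on the two sides, so restricting it yields an isomorphism of $\mathbb{K}$-algebras $\overline{\mathbb{K}_\ell\Lambda^A_\mathcal{K}}=e_\mathcal{K}\overline{\mathbb{K}_\ell\Lambda^A_\mathcal{G}}e_\mathcal{K}\to e_\mathcal{K}\mathbb{K}_\ell B^A_\mathcal{G}e_\mathcal{K}=\mathbb{K}_\ell B^A_\mathcal{K}$, which is unital because $\sum_{G\in\mathcal{K}}\mathrm{id}_G$ is sent to $\sum_{G\in\mathcal{K}}\mathrm{id}_G$. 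I do not expect a genuine obstacle: the only step needing a word of care is that the isomorphism of \cite{BO20} respects the grading by pairs of objects --- i.e.\ that it commutes with the cut-down $e_\mathcal{K}(-)e_\mathcal{K}$ --- which is built into its construction; and, for infinite $\mathcal{K}$, that $e_\mathcal{K}$ must be read as a local unit rather than a genuine element of the algebra, which is harmless since each product has only finitely many nonzero terms.
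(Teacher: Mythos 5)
Your proposal is correct and follows essentially the same route as the paper, which offers no explicit argument beyond observing that the statement is the restriction of \cite[Theorem 5.1]{BO20} from the class of all finite groups to the set $\mathcal{K}$; your write-up simply makes that cut-down by $e_\mathcal{K}=\sum_{G\in\mathcal{K}}\mathrm{id}_G$ explicit. The points you flag (compatibility with the grading by pairs of objects, and reading $e_\mathcal{K}$ as a local unit when $\mathcal{K}$ is infinite) are exactly the ones implicitly relied upon.
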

Note that if every group in $\mathcal{K}$ is abelian, we get $\mathbb{K}_\ell\Lambda^A_\mathcal{K}=\overline{\mathbb{K}_\ell\Lambda^A_\mathcal{K}}$, thus $\mathbb{K}_\ell \Lambda^A_\mathcal{K}\cong\mathbb{K}_\ell B^A_\mathcal{K}$.
\section{The Simple $\mathbb{K}_\ell\Lambda^A_\mathcal{K}$-modules that are bounded by the trivial group}
Our method for finding the simple $\mathbb{K}_\ell\Lambda^A_\mathcal{K}$-modules involves analysing certain submodules that are determined by the thoraxes of their basis elements. We are interested in the case where $\mathcal{K}=\lbrace G\rbrace$ for a finite group $G$, so let us set $\mathbb{K}_\ell\Lambda^A_G=\mathbb{K}_\ell\Lambda^A_{\lbrace G\rbrace}$. We let $N$ denote an element from a set of isomorphism classes of subquotients of $G$ and  $S^N_G$ be the submodule of the regular module $\mathbb{K}_\ell\Lambda^A_G$ generated by the elements $s^{G,G}_{T,\varphi}$ which satisfy the inequality $\left[q(T)\right]\leq \left[N\right]$. The subscript $G$ will sometimes be omitted to allow for a more detailed notation, but the group $G$ will be clear from the context. Notice that Proposition \ref{p2.2} ensures $S^N_G$ is indeed a submodule.

Let us set $\mathcal{K}=\lbrace G\rbrace$. It turns out that the simple modules that are included in the submodule $S^1$, the submodule generated by the elements where the groups are direct products, can completely be described when $\ell$ is algebraically independent. So let $S^{1}_{1,1}$ denote the submodule of $S^1$ with the basis $ \left\lbrace s^{G,G}_{E\times 1,\varphi}\right\rbrace_{\substack{E\leq G,\\ \varphi \in E^\ast}}$.
\begin{lemm}\label{l3.1}
If $\ell$ is algebraically independent with respect to $G$, then $S^{1}_{1,1}$ is simple as a $\mathbb{K}_\ell\Lambda^{A}_G$-module.
\end{lemm}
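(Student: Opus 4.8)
The plan is to show that $S^1_{1,1}$ is simple by exhibiting, for any nonzero element $x \in S^1_{1,1}$, a way to produce from $x$ an arbitrary basis element $s^{G,G}_{E\times 1,\varphi}$ by left and right multiplication inside $\mathbb{K}_\ell\Lambda^A_G$. Write $x = \sum_{(E,\varphi)} \lambda_{E,\varphi}\, s^{G,G}_{E\times 1,\varphi}$ with not all $\lambda_{E,\varphi}$ zero. The first step is to understand how the basis elements $s^{G,G}_{E\times 1,\varphi}$ behave under the $\ast$-product; since these all have the direct-product form, Lemma \ref{l2.3} and Proposition \ref{p2.2} give good control. In particular, for $M, K \leq G$ one computes $s^{G,G}_{M\times 1,1}\, s^{G,G}_{E\times 1,\varphi}$: the star product $(M\times 1)\ast(E\times 1)$ is again of direct-product form, with first projection $M$ (when $E$ meets the relevant subgroup appropriately) and trivial second component, and the deformation coefficient is $\ell(k_2(M\times 1)\cap k_1(E\times 1)) = \ell(1\times 1)\cdot(\text{nothing})$ — so I need to track exactly which $\ell$-values appear. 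The key point is that by composing on the left with elements $s^{G,G}_{M\times 1,1}$ and on the right with elements $s^{G,G}_{1\times K,\psi}$, one can move a single basis element $s^{G,G}_{E\times 1,\varphi}$ around: right multiplication by $s^{G,G}_{1\times K,\psi}$ shrinks/adjusts $E$ and twists $\varphi$, while left multiplication by idempotent-like combinations of the $s^{G,G}_{M\times 1,1}$ picks out a specific $E$.

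The second step is an \emph{isolation} argument. I want to apply a suitable element $a$ on the left and $b$ on the right so that $a\,x\,b$ is a nonzero scalar multiple of one chosen basis element $s^{G,G}_{E_0\times 1,\varphi_0}$, where $\lambda_{E_0,\varphi_0}\neq 0$. Multiplying by the "diagonal" elements $s^{G,G}_{E\times E, \theta}$ (for $\theta \in (E\times E)^*$ extending characters suitably) acts on the $\varphi$-labels by a character-theoretic averaging: summing over $\theta \in \mathrm{Hom}(E,A)$ of $s^{G,G}_{E\times E,(1,\theta)}$ composed appropriately will, by orthogonality of characters of the finite abelian-quotient groups involved, project onto a single $\varphi$. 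The subtlety is that $A$ need not be finite and $E^*$ need not be finite either, but only the finitely many characters of $E$ that actually occur among the labels in $x$ matter, and on that finite set the relevant Vandermonde/orthogonality relations are available. This reduces us to isolating the $E$-label: among basis elements with a fixed $\varphi$-type, the subgroup labels $E$ are partially ordered, and using the multiplication formula together with Lemma \ref{l2.3} one can peel off the maximal (or minimal) $E$ occurring, much as in the non-fibred proof of Theorem \ref{talg} in \cite{mfa}.

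The third and crucial step is where \emph{algebraic independence of $\ell$} enters. When one forms the linear combinations needed to isolate a basis element, the coefficients that appear are polynomial expressions in the values $\ell(q)$ for primes $q \mid |G|$ (these arise as $\ell(k_2(U)\cap k_1(V))$ with $k_2(U)\cap k_1(V)$ ranging over $p$-subgroups of subquotients of $G$). The danger is that some unwanted cross-term, which we need to vanish in order to isolate a single basis element, happens to equal another term — i.e. a polynomial identity in the $\ell(q)$ forces a collapse. Algebraic independence of $\{\ell(q)\}_q$ over $\mathbb{Q}$ is exactly the statement that no such nontrivial polynomial relation holds, so the "generic" computation — in which distinct monomials in the $\ell(q)$ stay distinct and the relevant matrix of coefficients is invertible — is valid. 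I expect this to be the main obstacle: making precise the bookkeeping that shows the transition matrix between $x$-derived elements and the basis elements it touches has determinant a nonzero polynomial in the $\ell(q)$, so that algebraic independence guarantees non-vanishing. Once a single basis element $s^{G,G}_{E_0\times 1,\varphi_0}$ is obtained, a final step shows that from it one reaches every $s^{G,G}_{E\times 1,\varphi}$: this is a direct computation with the multiplication formula — left multiply by $s^{G,G}_{E\times 1,1}$-type elements to change the first projection, and right multiply by $s^{G,G}_{1\times E_0,\varphi_0^{-1}}\ast(\text{something})$-type elements to reset the character — completing the proof that $S^1_{1,1}$ is simple.
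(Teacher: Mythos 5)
Your overall instinct --- reduce the question to a linear system whose matrix has determinant a nonzero polynomial in the $\ell(q)$ and then invoke algebraic independence --- is the right one, and it is exactly how the paper uses the hypothesis. But as written the proposal has two genuine problems. First, $S^{1}_{1,1}$ is a \emph{left} submodule of the regular module, so to prove simplicity you may only act on the left; the isolation step $a\,x\,b$ and the ``right multiplication by $s^{G,G}_{1\times K,\psi}$'' moves are not available. (They also do not do what you say: $s^{G,G}_{E\times 1,\varphi}s^{G,G}_{1\times K,\psi}=s^{G,G}_{E\times K,\varphi\times\psi}$ up to a scalar, which leaves $S^{1}_{1,1}$ entirely rather than shrinking $E$.) Since your character-averaging and ``peel off the extremal $E$'' steps are built on this two-sided action, the isolation argument does not go through as stated.

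Second, the step you flag as ``the main obstacle'' is really the whole proof, and there is a structural observation that makes it immediate and that your plan is missing: for any $E\leq G$ and $\psi\in E^{\ast}$, \emph{left} multiplication by $s^{G,G}_{1\times E,\psi}$ sends every basis element $s^{G,G}_{V\times 1,\varphi}$ to a scalar multiple of the single element $s^{G,G}_{1\times 1}$, namely $\ell(E\cap V)\delta(\psi,\varphi)\,s^{G,G}_{1\times 1}$, where $\delta(\psi,\varphi)$ records whether $(1\times E,\psi)\sim(V\times 1,\varphi)$. So no multi-stage isolation is needed: a proper submodule $M$ cannot contain $s^{G,G}_{1\times 1}$ (because $s^{G,G}_{V\times 1,\varphi}s^{G,G}_{1\times 1}=s^{G,G}_{V\times 1,\varphi}$ regenerates the whole basis), hence for $\alpha\in M$ all these products must vanish, giving a square homogeneous system indexed by subcharacters $(E,\psi)$ against $(V,\varphi)$. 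The decisive trick is to order rows and columns so that row $(U,\gamma)$ is paired with column $(U,\gamma^{-1})$; then every diagonal entry is $\ell(U)$ (since $(1\times U,\gamma)\sim(U\times 1,\gamma^{-1})$), and the diagonal product $\prod_{(U,\gamma)}\ell(U)$ is the unique top-degree monomial in the $\ell(q)$ because $\ell(E\cap V)$ has strictly smaller degree unless the subgroups coincide. Algebraic independence then gives $\det\neq 0$, forcing $\alpha=0$. Without this pairing and leading-term argument the non-degeneracy you need is simply asserted, so the proposal has a real gap at its central step.
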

\begin{proof}
Let $M$ be a proper submodule, $\alpha\in M$ and write $\alpha=\sum_{\substack{\varphi\in V^*,\\1\leq V\leq G}}c_{V\times 1,\varphi}s^{G,G}_{V\times 1,\varphi}$ where $c_{V\times 1,\varphi}\in\mathbb{K}$.
One can easily see that for all $E\leq G$ and $\psi\in E^\ast$ we have $$s^{G,G}_{E\times 1,\psi}s^{G,G}_{1\times 1}=s^{G,G}_{E\times 1,\psi}\,.$$  Therefore, if $s^{G,G}_{1\times 1}\in M$ then $M=S^{1}_{1,1}$. We also get 
$$s^{G,G}_{1\times E,\psi}\alpha=\left(\sum_{V\leq G,\varphi\in V^\ast}\ell(E\cap V)\delta(\psi,\varphi)c_{V\times 1,\varphi}\right)s^{G,G}_{1,1}$$ where $\delta(\psi,\varphi)$ is the logical value of  $(1\times E,\psi) \sim (V\times 1,\varphi)$.

Let $T$ be the square matrix whose rows and columns are indexed by the subcharacters of $G$ and the entry at $(U,\gamma)-(V,\varphi)$ is given by $T((U,\gamma),(V,\varphi))=\ell(U\cap V)\delta(\gamma,\varphi)$. For neatness, assume that the ordering of rows and columns of $T$ respects the partial ordering of the subcharacters given by $(U,\gamma)\geq (U,\gamma')$ if $|U|\geq |U'|$, and additionally, suppose that if $i$th row is $(U,\gamma)$, then the $i$th column is $(U,\gamma^{-1})$.

The determinant of $T$ can be seen as a polynomial with integer coefficients where the variables are the images $\ell(q)$ of prime divisors $q$ of $|G|$. This polynomial is clearly monic and the highest degree term $\prod_{U\leq G}(\ell(U))^{|U|}$ comes from the product of the diagonal entries. Thus, the determinant can't vanish as $\ell$ is algebraically independent. The submodule $M$ being proper implies that the coefficient of $s^{G,G}_{1,1}$ in the product $s^{G,G}_{1\times E,\psi}\alpha$ should vanish for all $E\leq G$ and $\psi\in E^\ast$, so we obtain a non-degenerate system of equations, that is, for each $E\leq G$ and $\psi\in E^\ast$ we must have $$\sum_{V\leq G,\varphi\in V^\ast}\ell(E\cap V)\delta(\psi,\varphi)c_{V\times 1,\varphi}=0.$$ However the unique solution is $c_{V\times 1,\varphi}=0$ for all $V\leq G$ and $\varphi\in V^\ast$ forcing $M=0$.
\end{proof}
Next we will obtain explicit descriptions for the isomorphic copies of $S^1_{1,1}$. Given a subgroup $E$ of $G$, assume an ordering on the elements of $E^*$, let $n_\psi$ denote the order corresponding to the homomorphism $\psi\in E^\ast$ and $\vert E^\ast\vert =n_E $. Given an $n_E$th root of unity $\omega$, let $S^{1}_{E,\omega}$ denote the $\mathbb{K}_\ell\Lambda^A_G$-module with basis $\left\lbrace v^{\omega}_{E'\times E,\varphi}\right\rbrace_{\substack{E'\leq G,\\ \varphi\in (E')^*}}$ where $v^{\omega}_{E'\times E,\varphi}=\sum_{\psi\in E^*}\omega^{n_\psi}s^{G,G}_{E'\times E,\varphi\times \psi}$. The linear independence of the set above follows from the fact that a Vandermonde matrix can be formed with the coefficients $\omega^{n_\psi}$ as $\omega$ runs over the $n_E$th roots of unity. This construction will give us the rest of the simple $\mathbb{K}_\ell\Lambda^A_G$-modules that are contained in $S^1$.
\begin{lemm}\label{l3.2}
As $\mathbb{K}_\ell \Lambda^A_G$-modules  we have $S^{1}_{E,\omega}\cong S^{1}_{1,1}$.
\end{lemm}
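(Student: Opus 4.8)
The plan is to write down an explicit intertwiner. Both $S^1_{1,1}$ and $S^1_{E,\omega}$ carry a $\mathbb{K}$-basis indexed by the \emph{same} set, the pairs $(E',\varphi)$ with $E'\le G$ and $\varphi\in(E')^{*}$ (the linear independence of the elements $v^{\omega}_{E'\times E,\varphi}$ having already been recorded via a Vandermonde argument). So I would let $\Phi\colon S^1_{1,1}\to S^1_{E,\omega}$ be the $\mathbb{K}$-linear map with $\Phi\bigl(s^{G,G}_{E'\times 1,\varphi}\bigr)=v^{\omega}_{E'\times E,\varphi}$; it is automatically a bijection of $\mathbb{K}$-vector spaces, and the whole problem is to show that it respects the left action of $\mathbb{K}_\ell\Lambda^A_G$. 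By $\mathbb{K}$-linearity it is enough to check, for an arbitrary basis element $s^{G,G}_{U,\gamma}$ and an arbitrary pair $(E',\varphi)$, the identity $\Phi\bigl(s^{G,G}_{U,\gamma}\,s^{G,G}_{E'\times 1,\varphi}\bigr)=s^{G,G}_{U,\gamma}\,v^{\omega}_{E'\times E,\varphi}$.

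Expanding both sides through the multiplication rule of $\mathbb{K}_\ell\Lambda^A_G$, three bookkeeping facts make the sides align. First, since $k_1(E'\times 1)=k_1(E'\times E)=E'$, the deformation coefficient that appears in $s^{G,G}_{U,\gamma}\,s^{G,G}_{E'\times 1,\varphi}$ and in every $s^{G,G}_{U,\gamma}\,s^{G,G}_{E'\times E,\varphi\times\psi}$ is one and the same scalar $\ell(k_2(U)\cap E')$. Second, the relatedness $(U,\gamma)\sim(E'\times 1,\varphi)$ unwinds to $\gamma_2(g)\varphi(g)=1$ for all $g\in k_2(U)\cap E'$, which is literally the same as $(U,\gamma)\sim(E'\times E,\varphi\times\psi)$, for every $\psi\in E^{*}$; hence the logical value $\varepsilon\in\{0,1\}$ of relatedness is the same throughout and is independent of $\psi$. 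Third, a direct inspection of the definition of the star product of subgroups shows that in both $U\ast(E'\times 1)$ and $U\ast(E'\times E)$ the first coordinate ranges over the same subgroup $E'':=\{u_1\in p_1(U):\exists\,g\in p_2(U)\cap E',\ u_1\times g\in U\}$, while the second coordinate is then unconstrained; thus $U\ast(E'\times 1)=E''\times 1$ and $U\ast(E'\times E)=E''\times E$. (One may also read this off Lemma \ref{l2.3}, whose right-hand side does not involve the second coordinate.)

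The crux is the behaviour of the characters. I claim that whenever $(U,\gamma)\sim(E'\times E,\varphi\times\psi)$, one has $\gamma\ast(\varphi\times\psi)=(\gamma\ast\varphi)\times\psi$ as a character of $E''\times E$, using that $\mathrm{Hom}(E''\times E,A)$ splits as $\mathrm{Hom}(E'',A)\times\mathrm{Hom}(E,A)$ since $A$ is abelian. To see this I would evaluate $\gamma\ast(\varphi\times\psi)$ on the two factors: on $w_1\times 1$ it equals $\gamma(w_1\times g)\varphi(g)$ for a witness $g\in p_2(U)\cap E'$ with $w_1\times g\in U$, which is exactly $(\gamma\ast\varphi)(w_1\times 1)$; on $1\times w_2$ it equals $\gamma(1\times g)\varphi(g)\psi(w_2)=\gamma_2(g)\varphi(g)\psi(w_2)=\psi(w_2)$ for a witness $g\in k_2(U)\cap E'$, the last equality being precisely the relatedness of $(U,\gamma)$ and $(E'\times E,\varphi\times\psi)$. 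Granting this, the multiplication rule gives $s^{G,G}_{U,\gamma}\,s^{G,G}_{E'\times E,\varphi\times\psi}=\varepsilon\,\ell(k_2(U)\cap E')\,s^{G,G}_{E''\times E,(\gamma\ast\varphi)\times\psi}$, and multiplying $v^{\omega}_{E'\times E,\varphi}=\sum_{\psi\in E^{*}}\omega^{n_\psi}s^{G,G}_{E'\times E,\varphi\times\psi}$ on the left by $s^{G,G}_{U,\gamma}$ (legitimate term by term because $\varepsilon$, $\ell(k_2(U)\cap E')$ and $\gamma\ast\varphi$ do not depend on $\psi$) produces $\varepsilon\,\ell(k_2(U)\cap E')\,v^{\omega}_{E''\times E,\gamma\ast\varphi}$. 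Since also $s^{G,G}_{U,\gamma}\,s^{G,G}_{E'\times 1,\varphi}=\varepsilon\,\ell(k_2(U)\cap E')\,s^{G,G}_{E''\times 1,\gamma\ast\varphi}$, applying $\Phi$ to it gives the same element; this is the desired identity, so $\Phi$ is an isomorphism of $\mathbb{K}_\ell\Lambda^A_G$-modules.

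I expect the one genuine obstacle to be the character identity $\gamma\ast(\varphi\times\psi)=(\gamma\ast\varphi)\times\psi$: one has to check that the witnessing element $g$ in the definition of the star product of characters can be chosen consistently for the two evaluations, and one has to use the relatedness hypothesis in the correct direction in order to cancel the unwanted $\gamma_2(g)\varphi(g)$ contribution on the $E$-factor. Everything else — matching the deformation coefficients, seeing that $\varepsilon$ is independent of $\psi$, and identifying $U\ast(E'\times E)$ with $E''\times E$ — is routine.
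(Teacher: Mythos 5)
Your proposal is correct and follows essentially the same route as the paper: the same intertwiner $s^{G,G}_{E'\times 1,\varphi}\mapsto v^{\omega}_{E'\times E,\varphi}$, verified by comparing $s^{G,G}_{U,\gamma}\,s^{G,G}_{E'\times 1,\varphi}$ with the termwise products $s^{G,G}_{U,\gamma}\,s^{G,G}_{E'\times E,\varphi\times\psi}$. You merely spell out the bookkeeping (equality of the coefficients $\ell(k_2(U)\cap E')$, the $\psi$-independence of relatedness, $U\ast(E'\times E)=E''\times E$, and $\gamma\ast(\varphi\times\psi)=(\gamma\ast\varphi)\times\psi$) that the paper's proof leaves implicit.
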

\begin{proof}
Define a map 
$f^{\omega}_{E}:S^1_{1,1}\rightarrow S^1_{E,\omega}$ by setting
$
f^{\omega}_{E}(s^{G,G}_{E'\times 1,\varphi})=v^{\omega}_{E'\times E,\varphi}
$
. Let $U\leq G\times G$ and $\gamma\in U^*$. We shall confirm that the $\mathbb{K}$-linear extension of $f^{\omega}_{E}$ is a $\mathbb{K}_\ell \Lambda^A_G$-module homomorphism, so suppose that $(U,\gamma)\sim (E'\times 1,\varphi)$ and for ease of notation set $T=p_1(U\ast (E'\times 1))$. Then $$
s^{G,G}_{U,\gamma}s^{G,G}_{E'\times 1,\varphi}=\ell(k_2(U)\cap E')s^{G,G}_{T\times 1,\theta }
$$
where for any $g_1\in T$, $\theta (g_1\times 1)=\gamma(g_1\times g_2)\varphi(g_2\times 1)$ for some $g_2\in p_2(U)\cap E'$ satisfying $g_1\times g_2\in U$.
Thus
\begin{align*}
f^{\omega}_{E}(s^{G,G}_{U,\gamma}s^{G,G}_{E'\times 1,\varphi})&=f^{\omega}_{E}(\ell(k_2(U\cap E'))s^{G,G}_{T\times 1,\theta})=\ell(k_2(U\cap E'))v^{\omega}_{T\times E,\theta}\\
&=\ell(k_2(U\cap E'))\sum_{\psi\in E^*}\omega^{n_\psi}s^{G,G}_{T\times E,\theta\times \psi}\\
&=\sum_{\psi\in E^*}\omega^{n_\psi}s^{G,G}_{U,\gamma}s^{G,G}_{E'\times E,\varphi\times \psi}=s^{G,G}_{U,\gamma}v^{\omega}_{E'\times E,\varphi}=s^{G,G}_{U,\gamma}f^\omega_E(s^{G,G}_{E'\times 1,\varphi}).
\end{align*}
\end{proof}
\begin{corol}
Let $\ell$ be algebraically independent with respect to $G$ and $n=\sum_{E\leq G}n_E$. Then $
S^1\cong \mathrm{Mat}_n(\mathbb{K})$ as $\mathbb{K}$-algebras.
\end{corol}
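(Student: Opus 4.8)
The plan is to show that $S^1$, as an algebra (more precisely, as a subalgebra of the regular representation, or at least as the quotient of $\mathbb{K}_\ell\Lambda^A_G$ acting faithfully on it), decomposes as a direct sum of copies of $S^1_{1,1}$, one for each pair $(E,\omega)$, and then to count the copies and invoke the Artin--Wedderburn picture. First I would record that $S^1$ has a $\mathbb{K}$-basis $\{s^{G,G}_{E'\times E,\varphi\times\psi}\}$ indexed by subgroups $E',E\le G$ together with characters $\varphi\in(E')^*$, $\psi\in E^*$, since these are precisely the basis elements $s^{G,G}_{T,\gamma}$ with $T$ of direct-product form (this is Proposition \ref{p2.2}'s guarantee that $S^1$ is a submodule, applied to the direct-product thorax condition). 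The change of basis $s^{G,G}_{E'\times E,\varphi\times\psi}\mapsto v^\omega_{E'\times E,\varphi}$, with $\omega$ ranging over $n_E$th roots of unity, is invertible by the Vandermonde argument already invoked before Lemma \ref{l3.2}; hence as a vector space $S^1=\bigoplus_{E\le G}\bigoplus_{\omega}S^1_{E,\omega}$, where the inner sum is over the $n_E$ distinct $n_E$th roots of unity.

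Next I would verify that this is actually a decomposition of $\mathbb{K}_\ell\Lambda^A_G$-modules, i.e.\ that each $S^1_{E,\omega}$ is a submodule. The computation in the proof of Lemma \ref{l3.2} shows exactly this: left multiplication by an arbitrary basis element $s^{G,G}_{U,\gamma}$ sends $v^\omega_{E'\times E,\varphi}$ to a scalar multiple of $v^\omega_{T\times E,\theta}$ (or to $0$ when the relatedness condition fails), so $S^1_{E,\omega}$ is closed under the action. Therefore $S^1\cong\bigoplus_{E\le G}(S^1_{E,\omega})^{}$ as modules, and by Lemma \ref{l3.2} each summand is isomorphic to $S^1_{1,1}$; by Lemma \ref{l3.1}, under the algebraic-independence hypothesis, $S^1_{1,1}$ is simple. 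Counting: the number of summands is $\sum_{E\le G}n_E=n$, so $S^1$ is a module which is the direct sum of $n$ copies of a single simple module.

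The final step is to pass from ``$S^1$ is $n$ copies of a simple module'' to ``$S^1\cong\mathrm{Mat}_n(\mathbb{K})$ as $\mathbb{K}$-algebras.'' Here I would argue that $S^1$ is not merely a $\mathbb{K}_\ell\Lambda^A_G$-module but carries its own algebra structure: it is an ideal of $\mathbb{K}_\ell\Lambda^A_G$ spanned by idempotent-generating direct-product elements, and in fact $S^1$ has identity element $\sum_{E\le G}\sum_{\psi\in E^*}\tfrac{1}{n_E}\,(\text{suitable combination})$ — concretely one checks, as in the opening lines of the proof of Lemma \ref{l3.1} where $s^{G,G}_{1\times 1}$ acts as an identity on the relevant piece, that $S^1$ is a unital algebra on which it acts faithfully by left multiplication. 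A finite-dimensional algebra whose regular module is a direct sum of $n$ copies of one simple module is, by Wedderburn's theorem together with the fact that $\mathbb{K}$ has enough roots of unity (so that $\mathrm{End}$ of the simple is just $\mathbb{K}$, using that $\mathbb{K}$ is a splitting field — which follows from $S^1_{1,1}$ being absolutely simple, itself a consequence of the explicit Vandermonde/triangular structure exhibited in Lemma \ref{l3.1}), isomorphic to $\mathrm{Mat}_n(\mathbb{K})$.

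The main obstacle I anticipate is the passage in the last paragraph: one must be careful that $S^1$ genuinely is a subalgebra (a two-sided ideal with its own unit), not just a left module, and that $\mathbb{K}$ is a splitting field for it, so that the endomorphism ring of the simple summand is exactly $\mathbb{K}$ rather than a division algebra; granting the hypothesis that $\mathbb{K}$ contains sufficiently many roots of unity and the concrete basis descriptions already in hand, both points are routine but deserve an explicit sentence.
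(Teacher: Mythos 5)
Your proposal is correct and takes essentially the same route as the paper: the paper's own proof is just the dimension count ($\dim S^1=n^2$, each $S^1_{E,\omega}$ has dimension $n$, and there are $n$ pairs $(E,\omega)$, giving $S^1=\bigoplus_{(E,\omega)}S^1_{E,\omega}$), with the Wedderburn step left implicit. The extra care you take at the end --- checking that $S^1$ is genuinely a (unital) two-sided ideal acting faithfully on itself and that the simple summand is absolutely simple --- is a legitimate and welcome elaboration of what the paper glosses over, not a different argument.
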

\begin{proof}
The dimension of $S^1$ is $n^2$ since given subgroups $E,F\leq G$ we have $|(E\times F)^\ast|=|E^\ast|\cdot|F^\ast|=n_E\cdot n_F$. On the other hand, for an $n_E$th root of unity $\omega$, the dimension of $S^{1}_{E,\omega}$ is $n$ whereas the number of possible $(E,\omega)$ pairs is also $n$, hence $S^1=\bigoplus_{(E,\omega)}S^1_{E,\omega}$. 
\end{proof}
As the next lemma suggests, the semisimplicity does extend beyond algebraically independent deformations.
\begin{lemm}\label{l3.3}
Suppose that $G$ is the cyclic group $C_q$ of prime order $q$ and $\ell(q)\not=|(C_q)^\ast|$. Then $S^1\cong\mathrm{Mat}_{|(C_q)^\ast| +1}(\mathbb{K})$ as $\mathbb{K}$-algebras.
\end{lemm}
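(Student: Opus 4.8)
\textbf{Proof plan for Lemma \ref{l3.3}.}

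The strategy is to analyze $S^1$ very concretely, exploiting that $G = C_q$ has only two subgroups, namely $1$ and $C_q$ itself. Consequently the basis of $S^1$ consists of the elements $s^{G,G}_{E \times E', \varphi}$ with $E, E' \in \{1, C_q\}$ and $\varphi$ a character of $E \times E'$. Since $|C_q^\ast| = n_{C_q}$ and $n_1 = 1$, setting $m = |C_q^\ast|$ we have $n = n_1 + n_{C_q} = m + 1$, so $\dim_{\mathbb{K}} S^1 = n^2 = (m+1)^2$, which matches the claimed matrix size. The plan is therefore to show $S^1$ is semisimple and has a single isomorphism class of simple module (of dimension $m+1$), each occurring with multiplicity $m+1$; equivalently, to exhibit an explicit algebra isomorphism $S^1 \cong \mathrm{Mat}_{m+1}(\mathbb{K})$.

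First I would reuse the machinery already developed: by Lemma \ref{l3.2}, the modules $S^1_{E,\omega}$ for $E \in \{1, C_q\}$ and $\omega$ an $n_E$th root of unity are all isomorphic to $S^1_{1,1}$, and together (as in the Corollary preceding this lemma) they account for all $n = m+1$ of the $(E,\omega)$ pairs and span $S^1$ as a direct sum \emph{when they are simple and distinct as submodules}. The algebraic independence hypothesis was used in Lemma \ref{l3.1} only to guarantee that a certain determinant — the determinant of the matrix $T$ with entries $\ell(U \cap V)\delta(\gamma,\varphi)$ — does not vanish. So the real content here is: when $G = C_q$, compute this determinant explicitly as a polynomial in the single variable $\ell(q)$, and check that the condition $\ell(q) \neq |C_q^\ast|$ is exactly what makes it nonzero. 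The matrix $T$ decomposes into blocks according to the pairing $\delta(\gamma,\varphi)$ of characters, and on $C_q$ the relevant block is essentially a $2\times 2$-type block built from the entries $\ell(1) = 1$ and $\ell(C_q) = \ell(q)$, combined across the $m$ nontrivial characters; I expect the determinant to come out (up to sign and units) as a power of $(\ell(q) - m)$ or of $(\ell(q) - 1)$ times something, and pinning down that it is $(\ell(q) - |C_q^\ast|)$ that controls non-degeneracy is the crux. Once the determinant is nonzero, the argument of Lemma \ref{l3.1} goes through verbatim to show $S^1_{1,1}$ is simple, Lemma \ref{l3.2} gives the isomorphic copies, and the dimension count of the Corollary forces $S^1 = \bigoplus_{(E,\omega)} S^1_{E,\omega} \cong \mathrm{Mat}_{m+1}(\mathbb{K})$.

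The main obstacle is the explicit determinant computation and, relatedly, making sure the submodules $S^1_{E,\omega}$ are genuinely distinct (not merely abstractly isomorphic) so that their direct sum exhausts $S^1$; over a non-algebraically-independent $\ell$ one must rule out unexpected coincidences or a nontrivial radical. I would handle this by working out the structure of $S^1$ as a bimodule — $S^1$ is closed under both left and right multiplication within $\mathbb{K}_\ell\Lambda^A_G$ — and identifying the idempotents directly: the element $s^{G,G}_{1\times 1}$ and the orbit sums of the $s^{G,G}_{C_q \times C_q, \varphi\times\varphi^{-1}}$-type elements should, after suitable normalization involving $(\ell(q) - m)^{-1}$, furnish a complete set of matrix units. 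If the direct, idempotent-based approach proves cleaner than the determinant bookkeeping, I would present that instead, but either way the inequality $\ell(q) \neq |C_q^\ast|$ will surface precisely as the invertibility condition for the normalizing scalar.
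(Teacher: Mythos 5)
Your plan is correct and follows essentially the same route as the paper: the paper likewise reduces everything to the simplicity of $S^1_{1,1}$ by testing the coefficient of $s^{C_q,C_q}_{1\times 1}$ in the products $s^{C_q,C_q}_{1\times E,\psi}\alpha$, except that it solves the resulting linear system by direct substitution (deriving $c_{C_q\times 1,\psi^{-1}}=-c_{1\times 1}/\lambda$ and summing) rather than via the determinant, which for the record is $\ell(q)^{\rho-1}\bigl(\ell(q)-\rho\bigr)$ with $\rho=|(C_q)^\ast|$, confirming your guess. The remaining steps — Lemma \ref{l3.2} and the dimension count forcing $S^1=S^1_{1,1}\oplus\bigoplus_{\omega}S^1_{C_q,\omega}$ — are used exactly as you describe.
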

\begin{proof}
For convenience, let us set $\lambda=\ell(q)$ and $\rho=\vert (C_q)^\ast \vert$.
We shall point out that when $G=\lbrace C_q\rbrace$ Lemma $\ref{l3.1}$ is applicable only if $\ell(q)$ is transcendental, so let us adapt the proof. Suppose that $M$ is a proper nonzero submodule of $S^1_{1,1}$ and let $\alpha\in M$ be a nonzero element. Write $\alpha=\sum_{\substack{\varphi\in E^*,\\1\leq E\leq C_q}}c_{E\times 1,\varphi}s^{C_q,C_q}_{E\times 1,\varphi}$ for some $c_{E\times 1,\varphi}\in\mathbb{K}$. Recall from the proof of Lemma $\ref{l3.1}$ that $s^{C_q,C_q}_{1\times 1}\in M$ implies $M=S^1_{1,1}$ so we must have
\begin{align*}
s^{C_q,C_q}_{1\times 1}\alpha=(\sum_{\varphi\in (C_q)^*}c_{C_q\times 1,\varphi}+c_{1\times 1})s^{C_q,C_q}_{1\times 1}=0.
\end{align*}
Moreover for any  $\psi\in (C_q)^*$, we have $s^{C_q,C_q}_{1\times C_q,\psi}\alpha =(\lambda c_{C_q\times 1,\psi^{-1}}+c_{1\times 1})s^{C_q,C_q}_{1\times 1}=0$ hence $c_{C_q\times 1,\psi^{-1}}=-\dfrac{c_{1\times 1}}{\lambda}$, which implies $c_{1\times 1}\not=0$ as otherwise $\alpha =0$. Substituting the relation for every character of $C_q$ in the product $s^{C_q,C_q}_{1\times 1}\alpha$, we obtain $\rho\cdot\dfrac{-c_{1\times 1}}{\lambda}+c_{1\times 1}=0$, that is, $\lambda =\rho$, which is a contradiction, so $S^{1}_{1,1}$ is simple. 

The simplicity of $S^{1}_{C_q,\omega}$ now follows from Lemma \ref{l3.2}. To finish, observe that the dimension of $S^1$ is $(\rho+1)^2$ and the dimension of $S^1_{1,1}$ is $\rho+1$, thus
\begin{align*}
S^1=S^1_{1,1}\bigoplus_{\omega}S^1_{C_q,\omega}\cong\mathrm{Mat}_{\rho +1}(\mathbb{K})
\end{align*}
where $\omega$ is running over the $\rho$th roots of unity.
\end{proof}
\section{Semisimplicity of $\mathbb{K}B^A(G,G)$  for  abelian groups}
We now give a deformation of the subcharacter algebra of a finite abelian group $G$ that is not semisimple, more importantly, this deformation is isomorphic to $\mathbb{K} B^A(G,G)$. Let $A$ be an abelian group satisfying \cite[Hypothesis 10.1]{BC18}. Then from \cite[Proposition 10.4]{BC18} we obtain $\vert G^\ast \vert=\vert G\vert_\pi$ and $\varphi |_{{G_{\pi'}}}=1$ for any $\varphi\in G^\ast$, hence $G^\ast\cong (G_\pi)^\ast$. Moreover, the same proposition also implies that for any subgroup $H\leq G$, the restriction map $G^\ast\rightarrow H^\ast$ is surjective, so any element of $H^\ast$ can be extended to $G$. Given finite abelian groups $K,L$ and homomorphisms $\theta\in K^\ast,\gamma\in L^\ast$ such that $\theta|_{K\cap L}=\gamma|_{K\cap L}$, we let $\theta\sqcup \gamma$ denote the unique element of $(KL)^\ast$ with the property that $(\theta\sqcup\gamma)|_{K}=\theta$ and $(\theta\sqcup\gamma)|_{L}=\gamma$.

The following is a corollary of Lemma \ref{l2.3}.

\begin{corol}\label{c4.1}
Suppose that $G$ is a finite abelian group, $U\leq G\times G,\, M\leq G\geq N$ and $A$ satisfies \cite[Hypothesis 10.1]{BC18}. Then
\begin{align*}
|p_1(U\ast(M\times N))^\ast|=\dfrac{|p_2(U)\cap M|_\pi\cdot |k_1(U)|_\pi}{|k_2(U)\cap M|_\pi}.
\end{align*}
\end{corol}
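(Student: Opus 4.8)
The plan is to derive this directly from Lemma~\ref{l2.3}, which gives the isomorphism
$$
\frac{p_1(U\ast(M\times N))}{k_1(U)}\cong\frac{p_2(U)\cap M}{k_2(U)\cap M}.
$$
First I would observe that, for a finite abelian group $H$ and $A$ satisfying \cite[Hypothesis 10.1]{BC18}, one has $|H^\ast|=|H|_\pi$ by \cite[Proposition 10.4]{BC18}; moreover the restriction map $G^\ast\to H^\ast$ is surjective, so the same formula applies to every subgroup involved here once we note all the relevant groups are subquotients of $G$ and hence abelian. So the strategy is simply to compute $|p_1(U\ast(M\times N))|_\pi$ and then pass to $\pi$-parts throughout.

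The key steps, in order: (1) Write $p_1(U\ast(M\times N))$ as a group with a subgroup $k_1(U)$ and apply Lemma~\ref{l2.3} to get
$$
|p_1(U\ast(M\times N))|=|k_1(U)|\cdot\frac{|p_2(U)\cap M|}{|k_2(U)\cap M|}.
$$
(2) Take $\pi$-parts of both sides; since $\pi$-part is multiplicative on the orders of finite groups (it is multiplicative on integers, and $|k_2(U)\cap M|$ divides $|p_2(U)\cap M|$ so no cancellation issue arises), we obtain
$$
|p_1(U\ast(M\times N))|_\pi=\frac{|p_2(U)\cap M|_\pi\cdot|k_1(U)|_\pi}{|k_2(U)\cap M|_\pi}.
$$
(3) Finally invoke $|p_1(U\ast(M\times N))^\ast|=|p_1(U\ast(M\times N))|_\pi$, which holds because $p_1(U\ast(M\times N))$ is a subgroup of the abelian group $G$ and the hypothesis on $A$ applies. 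Combining (2) and (3) yields the claimed identity.

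I do not expect a genuine obstacle here; this is essentially a bookkeeping argument. The one point that deserves a word of care is the justification that the $\pi$-part operation can be moved across the fraction in step (2): this is legitimate precisely because $k_2(U)\cap M\leq p_2(U)\cap M$, so the fraction $|p_2(U)\cap M|/|k_2(U)\cap M|$ is an integer and $\nu_\pi$ (the exponent vector at the primes in $\pi$) is additive on the corresponding factorizations. If one wanted to be fully careful one could instead argue group-theoretically that the left-hand side of the Lemma~\ref{l2.3} isomorphism has $\pi$-part equal to $|p_1(U\ast(M\times N))|_\pi/|k_1(U)|_\pi$ and the right-hand side has $\pi$-part $|p_2(U)\cap M|_\pi/|k_2(U)\cap M|_\pi$, but since everything in sight is a finite integer this is the same statement. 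Hence the corollary follows immediately.
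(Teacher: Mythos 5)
Your argument is correct and is exactly the route the paper intends: the paper gives no explicit proof, simply declaring the statement a corollary of Lemma~\ref{l2.3} combined with the facts $|H^\ast|=|H|_\pi$ and the surjectivity of restriction quoted from \cite[Proposition 10.4]{BC18} in the preceding paragraph. Your care about moving $\pi$-parts across the integer quotient is a fine (if unneeded in print) touch; nothing further is required.
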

For finite groups $F,G$ and $V\leq G\times G$ with $q(V)=1$, i.e., $V=p_1(V)\times p_2(V)$, set $$S_V=\sum_{\substack{\varphi\in (p_1(V))^\ast ,\\\psi\in (p_2(V))^\ast}}s^{F,G}_{V,\varphi\times\psi},\,\,\,\qquad(\varphi ,S_V)=\sum_{\psi\in (p_2(V))^\ast}s^{F,G}_{\varphi\times\psi }.$$ 

Given $\varphi,\psi\in G^\ast$ and $H\leq G$, let $\sim_H$ denote the equivalence relation on $G^\ast$ given by the equality of restrictions to $H$, that is, $\varphi\sim_H\psi$ if and only if $\varphi|_{H}=\psi|_{H}$. Also for $K\leq G$ and $\gamma\in K^\ast$, let $\widetilde{\gamma}^{_G}$ denote the subset of $G^\ast$ consisting of the elements $\varphi$ which satisfy $\varphi|_{K}=\gamma$. Notice that $|\widetilde{\gamma}^{_G}|=|(G/K)^\ast|=|G/K|_\pi$. Lastly, for $K\leq M\leq G\geq N$ set
\begin{align*}
S^{\gamma}_{M\times N}=\displaystyle{\sum_{\substack{\varphi\in\widetilde{\gamma}^{_M}}}}(\varphi,S_{M\times N}).
\end{align*}

Our aim is to determine the action of $s^{G,G}_{U,\zeta}$ on $S^{\gamma}_{M\times N}$ for $U\leq G\times G,\, \zeta\in U^\ast$. We shall realize this through a series of lemmas. To ease our notation slightly, given $H\leq G$ and $\chi\in H^\ast$ we also use $\chi$ to denote the character $\chi\times 1\in (H\times N)^\ast$ when no ambiguities can arise.
\begin{lemm}\label{l4.2}
Let $A$ be an abelian group satisfying \cite[Hypothesis 10.1]{BC18}, $G$ a finite abelian group, $U\leq G\times G,\,K\leq M\leq G,\,\gamma\in K^\ast ,\,\zeta\in U^\ast $ and $\varphi\in \widetilde{\gamma}^{_M}$ with $\zeta\ast\varphi\not=0$. Then:
\begin{enumerate}[i)]
\item We have $\zeta^{-1}_2|{_{k_2(U)\cap K}}=\gamma|_{{k_2(U)\cap K}}$ and $\varphi\in\widetilde{\theta}^{_M}$ where $\theta\in((k_2(U)\cap M)K)^\ast$ is such that $\theta=\gamma\sqcup(\zeta^{-1}_2|{_{k_2(U)\cap M}})$.
\item Letting $\theta'=\theta|_{(p_2(U)\cap K)(k_2(U)\cap M)}$ and $\psi,\,\chi\in\widetilde{\theta}^{_M}$, then $\zeta\ast\psi=\zeta\ast\chi$ if and only if $\psi$ and $\chi$ belong to the same extension set $\widetilde{\rho}^{_M}$ for some $\rho\in\widetilde{\theta'}^{_{p_2(U)\cap M}}$.
\end{enumerate}

\end{lemm}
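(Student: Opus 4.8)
The plan is to unwind the definition of the $\ast$-product of subcharacters when the second factor lives on a direct-product subgroup, and then read off both parts. Throughout I would regard $\varphi$, and later $\psi$ and $\chi$, as the character $\varphi\times1$ of the direct-product subgroup $M\times N\le G\times G$, following the convention fixed just before the statement; note that $k_1(M\times N)=p_1(M\times N)=M$. The one computation to make first is to unwind when $\zeta\ast\varphi\ne0$: this holds exactly when $(U,\zeta)\sim(M\times N,\varphi\times1)$, i.e.\ when $\zeta(1\times g)(\varphi\times1)(g\times1)=1$ for all $g\in k_2(U)\cap k_1(M\times N)=k_2(U)\cap M$, and since $\zeta(1\times g)=\zeta_2(g)$ and $(\varphi\times1)(g\times1)=\varphi(g)$ this is the single relation
\begin{align*}
\varphi|_{k_2(U)\cap M}=\zeta^{-1}_2|_{k_2(U)\cap M}.
\end{align*}

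For part (i) I would restrict this relation to the subgroup $k_2(U)\cap K\le k_2(U)\cap M$ and compare with $\varphi|_K=\gamma$ restricted to $k_2(U)\cap K$; this yields $\zeta^{-1}_2|_{k_2(U)\cap K}=\gamma|_{k_2(U)\cap K}$, which is the first assertion, and is also (using $K\cap(k_2(U)\cap M)=k_2(U)\cap K$) precisely the compatibility making $\theta=\gamma\sqcup(\zeta^{-1}_2|_{k_2(U)\cap M})\in((k_2(U)\cap M)K)^\ast$ well defined. Since $\varphi$ restricts to $\gamma$ on $K$ and to $\zeta^{-1}_2$ on $k_2(U)\cap M$, the uniqueness clause in the definition of $\sqcup$ forces $\varphi|_{(k_2(U)\cap M)K}=\theta$, i.e.\ $\varphi\in\widetilde{\theta}^{_M}$.

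For part (ii), the first step I would take is to observe that the underlying subgroup of $\zeta\ast\psi$ does not depend on $\psi$: from the definition of the $\ast$-product of subgroups one gets $U\ast(M\times N)=P\times N$, where $P=p_1(U\ast(M\times1))=\{u_1\in p_1(U):\exists\,g\in p_2(U)\cap M,\ u_1\times g\in U\}$, and the deformation scalar attached to $\zeta\ast\psi$ is $\ell(k_2(U)\cap k_1(M\times N))=\ell(k_2(U)\cap M)$, again independent of $\psi$. Any $\psi\in\widetilde{\theta}^{_M}$ restricts to $\zeta^{-1}_2$ on $k_2(U)\cap M$ and is therefore related to $\zeta$, so $\zeta\ast\psi$ is a nonzero subcharacter of $P\times N$ and $\zeta\ast\psi=\zeta\ast\chi$ reduces to equality of the characters $\zeta\ast(\psi\times1)$ and $\zeta\ast(\chi\times1)$ on $P\times N$. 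By definition $(\zeta\ast(\psi\times1))(u_1\times v_2)=\zeta(u_1\times g)\psi(g)$ for any $g\in p_2(U)\cap M$ with $u_1\times g\in U$. The genuinely delicate point — essentially the only one in the argument — is to check that this value is independent of the choice of such a $g$ (which holds precisely because $\psi|_{k_2(U)\cap M}=\zeta^{-1}_2|_{k_2(U)\cap M}$, and likewise for $\chi$) and that as $u_1$ ranges over $P$ the admissible $g$'s exhaust all of $p_2(U)\cap M$ rather than some proper subset; granting this, $\zeta\ast\psi=\zeta\ast\chi$ if and only if $\psi$ and $\chi$ agree on $p_2(U)\cap M$.

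Finally I would translate the equality $\psi|_{p_2(U)\cap M}=\chi|_{p_2(U)\cap M}$ into the stated form. One direction is immediate: if $\psi,\chi\in\widetilde{\rho}^{_M}$ for some character $\rho$ of $p_2(U)\cap M$, then both restrict to $\rho$ there. Conversely, set $\rho:=\psi|_{p_2(U)\cap M}\in(p_2(U)\cap M)^\ast$; using $(p_2(U)\cap K)(k_2(U)\cap M)\le(k_2(U)\cap M)K$ together with $\psi\in\widetilde{\theta}^{_M}$ one obtains $\rho|_{(p_2(U)\cap K)(k_2(U)\cap M)}=\theta|_{(p_2(U)\cap K)(k_2(U)\cap M)}=\theta'$, so $\rho\in\widetilde{\theta'}^{_{p_2(U)\cap M}}$ and $\psi,\chi\in\widetilde{\rho}^{_M}$, as required. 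Apart from the character computation in (ii), everything here is a direct unwinding of the definitions of relatedness, of $\ast$ on subgroups, and of $\sqcup$.
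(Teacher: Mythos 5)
Your argument is correct and follows essentially the same route as the paper: part (i) by unwinding the relatedness condition to get $\varphi|_{k_2(U)\cap M}=\zeta_2^{-1}|_{k_2(U)\cap M}$ and invoking the uniqueness in the definition of $\sqcup$, and part (ii) by reducing $\zeta\ast\psi=\zeta\ast\chi$ to agreement of $\psi$ and $\chi$ on $p_2(U)\cap M$ and then restricting to $(p_2(U)\cap K)(k_2(U)\cap M)$. You actually supply slightly more justification than the paper for the step that the product determines $\psi|_{p_2(U)\cap M}$ (the admissible $g$'s exhausting $p_2(U)\cap M$), which the paper simply asserts.
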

\begin{proof}
By definiton, $\zeta\ast\varphi\not=0$ if and only if $\zeta^{-1}|_{{k_2(U)\cap M}}=\varphi|_{{k_2(U)\cap M}}$. Notice that the values of $\varphi$ on groups $K$ and $k_2(U)\cap M$ are fixed by $\gamma$ and $\zeta^{-1}_2$, so we obtain $\varphi\in\widetilde{\theta}^{_M}$ where $\theta=\gamma\sqcup(\zeta^{-1}_2|_{{k_2(U)\cap M}})$. The first part follows.

For the second part, if $\zeta\ast\psi=\zeta\ast\chi$, then $\psi|_{{p_2(U)\cap M}}=\chi {}|_{{p_2(U)\cap M}}$, thus $\psi,\chi\in\widetilde{\rho}^{_M}$ for some $\rho\in (p_2(U)\cap M)^\ast$ which should (as $\psi,\,\chi\in\widetilde{\theta}^{_M}$) necessarily satisfy
$$\rho|_{{K(k_2(U)\cap M)\cap (p_2(U)\cap M)}}=\theta|_{{K(k_2(U)\cap M)\cap (p_2(U)\cap M)}}.$$ To finish, notice that $K(k_2(U)\cap M)\cap(p_2(U)\cap M)\supseteq(p_2(U)\cap K)(k_2(U)\cap M)$, and if we take $km\in K(k_2(U)\cap M)\cap (p_2(U)\cap M)$ with $k\in K,\,m\in k_2(U)\cap M$, then $k\in p_2(U)$, hence $K(k_2(U)\cap M)\cap(p_2(U)\cap M)=(p_2(U)\cap K)(k_2(U)\cap M)$.  The other direction is clear.
\end{proof}
It will be useful to group the elements of $\widetilde{\theta}^{_M}$ that yield the same $\ast$- product with $\zeta$, the next lemma achieves this.
\begin{lemm}\label{l4.a}
Assume the notation from Lemma \ref{l4.2} and let $\mathcal{I}=\widetilde{\theta'}^{_{p_2(U)\cap M}}$. Then the set $\widetilde{\theta}^{_M}$ can be partitioned as $\widetilde{\theta}^{_M}=\bigcup_{\rho\in \mathcal{I}}\widetilde{\theta\sqcup\rho}^{_M}$.
\end{lemm}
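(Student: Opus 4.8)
The plan is to read off the partition directly from the combinatorics of restriction maps established in Lemma \ref{l4.2}. Recall that $\widetilde{\theta}^{_M}$ consists of those $\varphi\in M^\ast$ whose restriction to $(k_2(U)\cap M)K$ equals $\theta$. The idea is to refine each such $\varphi$ by one more piece of data: its restriction to $p_2(U)\cap M$. Since $\varphi\in\widetilde{\theta}^{_M}$ already forces $\varphi|_{(p_2(U)\cap K)(k_2(U)\cap M)}=\theta'$ (this is exactly the content identified at the end of the proof of Lemma \ref{l4.2}, using $K(k_2(U)\cap M)\cap(p_2(U)\cap M)=(p_2(U)\cap K)(k_2(U)\cap M)$), the restriction $\varphi|_{p_2(U)\cap M}$ is an element $\rho$ of $\widetilde{\theta'}^{_{p_2(U)\cap M}}=\mathcal{I}$. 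So every $\varphi\in\widetilde{\theta}^{_M}$ lies in $\widetilde{\theta\sqcup\rho}^{_M}$ for a unique such $\rho$, provided $\theta\sqcup\rho$ is well-defined.

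First I would check that $\theta\sqcup\rho$ makes sense for each $\rho\in\mathcal{I}$: the two characters $\theta\in((k_2(U)\cap M)K)^\ast$ and $\rho\in(p_2(U)\cap M)^\ast$ agree on the intersection $(k_2(U)\cap M)K\cap(p_2(U)\cap M)=(p_2(U)\cap K)(k_2(U)\cap M)$, where both equal $\theta'$ — this is again the intersection computation from Lemma \ref{l4.2} together with the defining property of $\mathcal{I}$. Hence, invoking the $\sqcup$ notation set up just before Corollary \ref{c4.1} (legitimate because $A$ satisfies \cite[Hypothesis 10.1]{BC18}, so subgroups of $G$ have splitting character extensions), the element $\theta\sqcup\rho\in((k_2(U)\cap M)K(p_2(U)\cap M))^\ast$ exists and is unique. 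Then $\widetilde{\theta\sqcup\rho}^{_M}\subseteq\widetilde{\theta}^{_M}$ because restricting $\theta\sqcup\rho$ back to $(k_2(U)\cap M)K$ recovers $\theta$.

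Next I would establish that the pieces are disjoint: if $\varphi\in\widetilde{\theta\sqcup\rho_1}^{_M}\cap\widetilde{\theta\sqcup\rho_2}^{_M}$ then $\rho_1=\varphi|_{p_2(U)\cap M}=\rho_2$, so distinct $\rho$ give disjoint sets. Combined with the previous paragraph — every $\varphi\in\widetilde{\theta}^{_M}$ lands in exactly one $\widetilde{\theta\sqcup\rho}^{_M}$ via $\rho=\varphi|_{p_2(U)\cap M}$ — this yields $\widetilde{\theta}^{_M}=\bigsqcup_{\rho\in\mathcal{I}}\widetilde{\theta\sqcup\rho}^{_M}$, which is the asserted partition. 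I do not expect a serious obstacle here; the only mildly delicate point is bookkeeping the ambient group on which $\theta\sqcup\rho$ lives and confirming the intersection identity $(k_2(U)\cap M)K\cap(p_2(U)\cap M)=(p_2(U)\cap K)(k_2(U)\cap M)$, but that has already been done inside the proof of Lemma \ref{l4.2} and can simply be cited. The statement is really a repackaging of Lemma \ref{l4.2}(ii): two characters in $\widetilde{\theta}^{_M}$ have the same $\ast$-product with $\zeta$ iff they share a common $\widetilde{\theta\sqcup\rho}^{_M}$, so the partition blocks are exactly the fibres of $\psi\mapsto\zeta\ast\psi$.
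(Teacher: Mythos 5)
Your proof is correct, but it takes a genuinely different route from the paper's. You argue set-theoretically: every $\varphi\in\widetilde{\theta}^{_M}$ restricts on $(p_2(U)\cap K)(k_2(U)\cap M)$ to $\theta'$ (since that subgroup sits inside $(k_2(U)\cap M)K$), hence $\rho:=\varphi|_{p_2(U)\cap M}$ lies in $\mathcal{I}$ and $\varphi\in\widetilde{\theta\sqcup\rho}^{_M}$; combined with the reverse inclusion $\widetilde{\theta\sqcup\rho}^{_M}\subseteq\widetilde{\theta}^{_M}$ and disjointness (distinct $\rho$ are detected by restriction to $p_2(U)\cap M$), the partition is exactly the fibre decomposition of the map $\varphi\mapsto\varphi|_{p_2(U)\cap M}$. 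The paper instead only checks that the characters $\theta\sqcup\rho$ are pairwise distinct and then closes the argument by a cardinality count: using $|\widetilde{\gamma}^{_G}|=|G/K|_\pi$ and the second isomorphism theorem it verifies $|\widetilde{\theta\sqcup\rho}^{_M}|\cdot|\mathcal{I}|=|\widetilde{\theta}^{_M}|$. Your approach is more elementary and makes the covering of $\widetilde{\theta}^{_M}$ transparent without any arithmetic of $\pi$-parts; the paper's counting argument has the side benefit of producing the explicit block size $|\widetilde{\theta\sqcup\rho}^{_M}|=|M|_\pi/|(p_2(U)\cap M)K|_\pi$, which is reused immediately in the proof of Theorem \ref{c4.4}, so if you adopt your route you would still want to record that cardinality separately. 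Both arguments rest on the same intersection identity $K(k_2(U)\cap M)\cap(p_2(U)\cap M)=(p_2(U)\cap K)(k_2(U)\cap M)$ from Lemma \ref{l4.2}, which you correctly cite to justify that $\theta\sqcup\rho$ is well defined.
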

\begin{proof}
Observe that for $\rho,\rho'\in\mathcal{I}$, we have $\theta\sqcup\rho=\theta\sqcup\rho'$ if and only if $\rho=\rho'$, because assuming $(k_2(U)\cap M)K\supseteq p_2(U)\cap M$ forces $\mathcal{I}$ to be a singleton. Hence the partition on the right hand side contains distinct characters and it remains to compare the sizes of the two sets. We have $|\widetilde{\theta}^{_M}|=\dfrac{|M|_\pi}{|K(k_2(U)\cap M)|_\pi},\, |\widetilde{\theta\sqcup\rho}^{_M}|=\dfrac{|M|_\pi}{|(p_2(U)\cap M)K|_\pi}$ and $|\mathcal{I}|=\dfrac{|p_2(U)\cap M|_\pi}{|(p_2(U)\cap K)(k_2(U)\cap M)|_\pi}.$ Direct calculation and the second isomorphism theorem yields
\begin{align*}|\widetilde{\theta\sqcup\rho}^{_M}|\cdot|\mathcal{I}|&=\dfrac{|M|_\pi\cdot|p_2(U)\cap M\cap K|_\pi\cdot|p_2(U)\cap M|_\pi\cdot|k_2(U)\cap K|_\pi}{|p_2(U)\cap M|_\pi\cdot|K|_\pi\cdot|p_2(U)\cap K|_\pi\cdot|k_2(U)\cap M|_\pi}\\
&=\dfrac{|M|_\pi}{|K(k_2(U)\cap M)|_\pi}.
\end{align*}
\end{proof}
We also need to figure out how exactly are the assumptions of Lemma \ref{l4.2} on $\varphi$ are translating into the product $\zeta\ast\varphi$.
\begin{lemm}\label{l4.3}
Assume the notation from Lemma \ref{l4.2}. let $T=p_1(U\ast ((k_2(U)\cap M)K)\times 1) $ and $\mathcal{I}=\widetilde{\theta'}^{_{p_2(U)\cap M}}$ . Then 
\begin{enumerate}[i)]
\item We have $\zeta\ast\varphi\in\widetilde{r_M}^{_{p_1(U\ast(M\times 1))}}$ where $r_M\in T^\ast$ is given by $r_M(t)=\zeta(t\times g)\theta(g\times 1)$ for $t\in T$.
\item We have $|\widetilde{r_M}^{_ {p_1(U\ast(M\times 1))}}|=|\mathcal{I}|$.
\end{enumerate} 
\end{lemm}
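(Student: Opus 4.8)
The plan is to compute the product $\zeta\ast\varphi$ directly using the definition of the $\ast$-product of subcharacters, tracking where it lands and counting the fibre of the extension. For part (i), recall that $\zeta\ast\varphi$ is supported on $p_1(U\ast(M\times 1))$, and by Lemma \ref{l2.3} (applied with $N=1$) the subgroup $T=p_1(U\ast((k_2(U)\cap M)K\times 1))$ satisfies $T/k_1(U)\cong(p_2(U)\cap(k_2(U)\cap M)K)/(k_2(U)\cap(k_2(U)\cap M)K)$, so $T\leq p_1(U\ast(M\times 1))$ since $(k_2(U)\cap M)K\leq M$. The character $r_M\in T^\ast$ defined by $r_M(t)=\zeta(t\times g)\theta(g\times 1)$ is well-defined because $\varphi\in\widetilde{\theta}^{_M}$ pins down $\varphi$ on $(k_2(U)\cap M)K$, so the ambiguity in the choice of $g$ (which lives in $k_2(U)\cap M\leq(k_2(U)\cap M)K$) is absorbed by the relatedness condition $\zeta^{-1}_2|_{k_2(U)\cap M}=\varphi|_{k_2(U)\cap M}=\theta|_{k_2(U)\cap M}$ exactly as in the setup for the $\ast$-product in Section \ref{s2}. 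Then for any $t\in T$ one checks $(\zeta\ast\varphi)(t\times 1)=\zeta(t\times g)\varphi(g\times 1)=\zeta(t\times g)\theta(g\times 1)=r_M(t)$, giving $\zeta\ast\varphi\in\widetilde{r_M}^{_{p_1(U\ast(M\times 1))}}$.

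For part (ii), the idea is to compare the two indices via $\pi$-parts. On one side, $|\widetilde{r_M}^{_{p_1(U\ast(M\times 1))}}|=|p_1(U\ast(M\times 1)):T|_\pi$, and Corollary \ref{c4.1} computes both $|p_1(U\ast(M\times 1))^\ast|=\dfrac{|p_2(U)\cap M|_\pi|k_1(U)|_\pi}{|k_2(U)\cap M|_\pi}$ and $|T^\ast|=\dfrac{|p_2(U)\cap(k_2(U)\cap M)K|_\pi|k_1(U)|_\pi}{|k_2(U)\cap(k_2(U)\cap M)K|_\pi}$; the ratio is
\begin{align*}
|\widetilde{r_M}^{_{p_1(U\ast(M\times 1))}}|=\dfrac{|p_2(U)\cap M|_\pi\cdot|k_2(U)\cap(k_2(U)\cap M)K|_\pi}{|k_2(U)\cap M|_\pi\cdot|p_2(U)\cap(k_2(U)\cap M)K|_\pi}.
\end{align*}
Since $k_2(U)\cap M\leq(k_2(U)\cap M)K$ we have $k_2(U)\cap(k_2(U)\cap M)K=k_2(U)\cap M$, so the second factor on top cancels the first on the bottom, leaving $|\widetilde{r_M}^{_{p_1(U\ast(M\times 1))}}|=\dfrac{|p_2(U)\cap M|_\pi}{|p_2(U)\cap(k_2(U)\cap M)K|_\pi}$. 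On the other side, $|\mathcal{I}|=|\widetilde{\theta'}^{_{p_2(U)\cap M}}|=|p_2(U)\cap M:(p_2(U)\cap K)(k_2(U)\cap M)|_\pi$, so it suffices to show $p_2(U)\cap(k_2(U)\cap M)K=(p_2(U)\cap K)(k_2(U)\cap M)$. The inclusion $\supseteq$ is immediate since $k_2(U)\cap M\leq p_2(U)$; for $\subseteq$, if $x\in p_2(U)$ and $x=mk$ with $m\in k_2(U)\cap M$, $k\in K$, then $k=m^{-1}x\in p_2(U)$, so $k\in p_2(U)\cap K$ — this is the same elementary manipulation used in the proof of Lemma \ref{l4.2}(ii).

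The main obstacle is really just bookkeeping: one must be careful that the various $\pi$-part identities from Corollary \ref{c4.1} are being applied to the correct subgroups (e.g. $T$ is itself of the form $p_1(U\ast(M'\times N'))$ with $M'=(k_2(U)\cap M)K$, $N'=1$, so Corollary \ref{c4.1} genuinely applies), and that the well-definedness of $r_M$ is not circular — it rests precisely on the relatedness constraint that Lemma \ref{l4.2}(i) extracted. Beyond that, the proof is a cancellation of $\pi$-parts together with the single group-theoretic identity $p_2(U)\cap(k_2(U)\cap M)K=(p_2(U)\cap K)(k_2(U)\cap M)$, which is the genuine content and is handled by the Dedekind-type argument above.
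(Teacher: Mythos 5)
Your proposal is correct and follows essentially the same route as the paper: part (i) is the same direct computation of $(\zeta\ast\varphi)(t\times 1)$ (with the well-definedness of $r_M$ made explicit, which the paper leaves implicit), and part (ii) computes $|\widetilde{r_M}^{_{p_1(U\ast(M\times 1))}}|$ as the ratio $|p_1(U\ast(M\times 1))^\ast|/|T^\ast|$ via Corollary \ref{c4.1}. The only difference is that you simplify that ratio by directly establishing the identities $k_2(U)\cap(k_2(U)\cap M)K=k_2(U)\cap M$ and $p_2(U)\cap(k_2(U)\cap M)K=(p_2(U)\cap K)(k_2(U)\cap M)$, whereas the paper grinds through a longer chain of second-isomorphism-theorem cancellations to reach the same expression $|p_2(U)\cap M|_\pi/|(p_2(U)\cap K)(k_2(U)\cap M)|_\pi$.
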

\begin{proof}
Let $\varphi\in\widetilde{\theta}^{_M}$ and $t\in T,\, g\in (k_2(U)\cap M)K$. Then $$(\zeta\ast\varphi)(t\times 1)=\zeta(t\times g)\varphi(g\times 1)=\zeta(t\times g)\theta(g\times 1)$$ hence $\zeta\ast\varphi \in \widetilde{r_M}^{_{p_1(U\ast (M\times 1))}}$ where $r_M(t)=\zeta(t\times g)\theta(g\times 1)$ so the first part follows. For the second part, Corollary \ref{c4.1} and the repeated use of the second isomorphism theorem yields
\begin{align*}
|\widetilde{r_M}^{_{p_1(U\ast (M\times 1))}}|&=\dfrac{|(p_1(U\ast (M\times 1))^\ast |}{|T^\ast |}\\
&=\dfrac{|p_2(U)\cap M |_\pi\cdot|k_1(U)|_\pi\cdot |k_2(U)\cap (k_2(U)\cap M)K |_\pi}{|k_2(U) \cap M |_\pi\cdot|p_2(U)\cap (k_2(U)\cap M)K |_\pi\cdot |k_1(U)|_\pi}\\
&=\dfrac{|p_2(U)\cap M |_\pi\cdot|k_2(U)|_\pi\cdot |(k_2(U)\cap M)K |_\pi\cdot|p_2(U)(k_2(U)\cap M)K|_\pi}{|k_2(U)\cap M|_\pi\cdot |p_2(U)|_\pi\cdot|(k_2(U)\cap M)K|_\pi\cdot |k_2(U)(k_2(U)\cap M)K|_\pi}\\
&=\dfrac{|p_2(U)\cap M |_\pi \cdot |k_2(U)|_\pi \cdot |p_2(U)K|_\pi}{|k_2(U)\cap M|_\pi\cdot |p_2(U)|_\pi \cdot|k_2(U)K|_\pi}\\
&=\dfrac{|p_2(U)\cap M|_\pi\cdot |k_2(U)|_\pi\cdot |p_2(U)|_\pi\cdot |K|_\pi\cdot |k_2(U)\cap K|_\pi}{|k_2(U)\cap M|_\pi\cdot|p_2(U)|_\pi\cdot |k_2(U)|_\pi\cdot |K|_\pi\cdot |p_2(U)\cap K|_\pi}\\
&=\dfrac{|p_2(U)\cap M|_\pi}{|(k_2(U)\cap M)(p_2(U)\cap K)|_\pi}.
\end{align*}
\end{proof}
Now we are ready to give an explicit description for the action of $s^{G,G}_{U,\zeta}$ on $S^{\gamma}_{M\times N}$.
\begin{theorem}\label{c4.4}
Assume the notation from Lemma \ref{l4.2} and let $N\leq G$. Then we obtain
\begin{align*}
s^{G,G}_{U,\zeta}S^{\gamma}_{M\times N}=S^{r_M}_{p_1(U\ast (M\times N))\times 1}\dfrac{|M|_\pi}{|(p_2(U)\cap M)K|_\pi}\ell(k_2(U)\cap M)
\end{align*}
where $r_M\in (p_1(U\ast ((k_2(U)\cap M))K)\times N)^\ast$ is as in Lemma \ref{l4.3}.
\end{theorem}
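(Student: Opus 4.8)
The plan is to unwind both sides into sums of basis elements $s^{G,G}_{\cdot,\cdot}$ and match them term by term, letting Lemmas \ref{l4.2}, \ref{l4.a} and \ref{l4.3} carry all the combinatorics. First I would write, directly from the definitions of $S^{\gamma}_{M\times N}$ and of $(\varphi,S_V)$,
\begin{align*}
S^{\gamma}_{M\times N}=\sum_{\varphi\in\widetilde{\gamma}^{M}}\;\sum_{\psi\in N^{\ast}}s^{G,G}_{M\times N,\,\varphi\times\psi},
\end{align*}
and apply the multiplication rule of $\mathbb{K}_\ell\Lambda^A_G$ to each summand. Since $k_1(M\times N)=M$, every nonzero product $s^{G,G}_{U,\zeta}s^{G,G}_{M\times N,\varphi\times\psi}$ carries the same scalar $\ell(k_2(U)\cap M)$, which therefore factors out of the entire sum at once. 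The relatedness condition $(U,\zeta)\sim(M\times N,\varphi\times\psi)$ is, by inspection, equivalent to $\zeta^{-1}_2|_{k_2(U)\cap M}=\varphi|_{k_2(U)\cap M}$ and does not involve $\psi$; hence by Lemma \ref{l4.2}(i) the surviving $\varphi$ are exactly those in $\widetilde{\theta}^{M}\subseteq\widetilde{\gamma}^{M}$ (and all $\psi\in N^{\ast}$ survive with each of them), while the other $\varphi\in\widetilde{\gamma}^{M}$ contribute zero. If $\gamma$ and $\zeta^{-1}_2$ disagree on $k_2(U)\cap K$ there is nothing surviving and the identity reads $0=0$, so I would dispose of this degenerate case up front.

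Next I would identify the subcharacter produced by each surviving term. By Lemma \ref{l2.3} the first projection of $U\ast(M\times N)$ is $p_1(U\ast(M\times N))$, and for $\varphi\in\widetilde{\theta}^{M}$ the first-coordinate part of $\zeta\ast(\varphi\times\psi)$ is $r_\varphi:=\zeta\ast\varphi$, which by Lemma \ref{l4.3}(i) is an extension of $r_M$. Summing $\psi$ over $N^{\ast}$ first, the surviving part of $s^{G,G}_{U,\zeta}S^{\gamma}_{M\times N}$ reorganises into $\ell(k_2(U)\cap M)$ times a sum, over $\varphi\in\widetilde{\theta}^{M}$, of the contributions indexed by the characters $r_\varphi\in\widetilde{r_M}^{\,p_1(U\ast(M\times N))}$. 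The remaining task is to count how often each $r_\varphi$ is attained.

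This is the crux. By Lemma \ref{l4.2}(ii) the map $\varphi\mapsto\zeta\ast\varphi$ is constant precisely on the blocks of the partition $\widetilde{\theta}^{M}=\bigcup_{\rho\in\mathcal{I}}\widetilde{\theta\sqcup\rho}^{M}$ supplied by Lemma \ref{l4.a}; each block has size $|M|_\pi/|(p_2(U)\cap M)K|_\pi$, and the number of blocks $|\mathcal{I}|$ equals $|\widetilde{r_M}^{\,p_1(U\ast(M\times N))}|$ by Lemma \ref{l4.3}(ii). Therefore $\varphi\mapsto\zeta\ast\varphi$ maps $\widetilde{\theta}^{M}$ onto the entire extension set $\widetilde{r_M}^{\,p_1(U\ast(M\times N))}$, with every fibre of size $|M|_\pi/|(p_2(U)\cap M)K|_\pi$. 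Substituting this back collapses the sum to $\big(|M|_\pi/|(p_2(U)\cap M)K|_\pi\big)\,S^{r_M}_{p_1(U\ast(M\times N))\times 1}$, and restoring the scalar $\ell(k_2(U)\cap M)$ gives the claimed identity.

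The step I expect to be the genuine obstacle is the ``surjective with uniform fibres'' assertion above: Lemma \ref{l4.2}(ii) only identifies the fibres of $\varphi\mapsto\zeta\ast\varphi$ with the single blocks $\widetilde{\theta\sqcup\rho}^{M}$, so one must combine it with the two independent cardinality computations of Lemmas \ref{l4.a} and \ref{l4.3}(ii) to conclude that the image is all of $\widetilde{r_M}^{\,p_1(U\ast(M\times N))}$ rather than a proper subset. Everything else --- the explicit factorisation of $\zeta\ast(\varphi\times\psi)$, the identification of the scalar, and the final reassembly into $S^{r_M}_{\cdots}$ --- is routine unwinding of definitions already in place.
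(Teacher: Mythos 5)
Your proposal is correct and follows essentially the same route as the paper's proof: expand $S^{\gamma}_{M\times N}$, discard the non-related terms via Lemma \ref{l4.2}(i), use Lemma \ref{l4.a} together with Lemma \ref{l4.2}(ii) to see that $\varphi\mapsto\zeta\ast\varphi$ is constant on blocks of uniform size $|M|_\pi/|(p_2(U)\cap M)K|_\pi$, and then combine injectivity across blocks with the cardinality match of Lemma \ref{l4.3}(ii) to conclude the image is all of $\widetilde{r_M}$. The only (cosmetic) difference is that you treat general $N$ directly by summing over $\psi\in N^{\ast}$ first, whereas the paper writes out only the case $N=1$ and asserts the general case is identical.
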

\begin{proof}
We shall prove the statement only for the case $N=1$ and it will be clear that the same proof works when $N$ is non-trivial. To simplify our notation, let $W={p_1(U\ast(M\times 1))}$. Take $\rho\in\widetilde{\theta'}^{_{p_2(U)\cap M}}$ and fix an element $\overline{\theta\sqcup\rho}$ from $\widetilde{\theta\sqcup\rho}^{_M}$. Then for any $\varphi\in\widetilde{\theta\sqcup\rho}^{_M}$, the second part of Lemma \ref{l4.2} implies that $\zeta\ast\varphi=\zeta\ast\,\overline{\theta\sqcup\rho}$. Also, from the first part of Lemma \ref{l4.3} we get $(\zeta\ast\,\overline{\theta\sqcup\rho})|_{p_1(U\ast ((k_2(U)\cap M)K\times 1))}=r_M$ and the second part gives $\lbrace \zeta\ast \,\overline{\theta\sqcup\rho} \rbrace_{\rho\in\mathcal{I}}=\widetilde{r}^{_{W}}_M$. Now direct calculation and Lemma \ref{l4.a} yields
\begin{align*}
s^{G,G}_{U,\zeta}S^{\gamma}_{M\times 1}&=\sum_{\varphi\in\widetilde{\gamma}^{_M}}s^{G,G}_{U,\zeta}s^{G,G}_{M\times 1,\varphi}=\sum_{\varphi\in\widetilde{\theta}^{_M}}s^{G,G}_{U,\zeta}s^{G,G}_{M\times 1,\varphi}\\
&=\sum_{\rho\in\mathcal{I},\,\varphi\in\widetilde{\theta\sqcup\rho}^{_M}}s^{G,G}_{W\times 1,\zeta\ast\varphi}\ell(k_2(U)\cap M)\\
&=\sum_{\rho\in\mathcal{I}}\dfrac{s^{G,G}_{W\times 1,\zeta\ast\,\overline{\theta\sqcup\rho}}|M|_\pi}{|(p_2(U)\cap M)K|_\pi}\ell(k_2(U)\cap M)\\
&=\sum_{\psi\in\widetilde{r}^{_{W}}_M}\dfrac{s^{G,G}_{W\times 1,\psi}|M|_\pi}{|(p_2(U)\cap M)K|_\pi}\ell(k_2(U)\cap M)\\
&=S^{r_M}_{W\times 1}\dfrac{|M_\pi|}{|(p_2(U)\cap M)|_\pi}\ell(k_2(U)\cap M).
\end{align*}

\end{proof}
\begin{corol}\label{c4.5}
 Assume the notation from Lemma \ref{l4.2} and for a prime divisor $p$ of $|G|$, set $$\ell(p)=\begin{cases}
    p, & \text{if } p\,\text{divides}\, |G|_\pi\\
   1, & \text{otherwise }
  \end{cases}.$$ Then 
\begin{align*}
s^{G,G}_{U,\zeta}S^{\gamma}_{M\times 1}=S^{r_M}_{p_1(U\ast(M\times 1))\times 1}\dfrac{|M|_\pi \cdot |(p_2(U)\cap K)|_\pi\cdot|k_1(U)|_\pi}{|p_1(U\ast(M\times 1))|_\pi\cdot |K|_\pi}.
\end{align*}
\end{corol}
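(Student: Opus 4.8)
The plan is to specialise the formula of Theorem \ref{c4.4} to the given choice of $\ell$ and then rewrite the coefficient in the stated form using order-counting identities, mostly the second isomorphism theorem applied to $\pi$-parts. First I would observe that for the present $\ell$ we have, for any finite group $H$, $\ell(H)=\ell(|H|)=|H|_\pi$: indeed $\ell$ is multiplicative and takes each prime $p\mid|G|_\pi$ to $p$ and every other prime to $1$, so $\ell(|H|)$ picks out exactly the $\pi$-part of $|H|$. In particular $\ell(k_2(U)\cap M)=|k_2(U)\cap M|_\pi$. Substituting this into Theorem \ref{c4.4} gives
\begin{align*}
s^{G,G}_{U,\zeta}S^{\gamma}_{M\times 1}=S^{r_M}_{p_1(U\ast(M\times 1))\times 1}\cdot\dfrac{|M|_\pi\cdot|k_2(U)\cap M|_\pi}{|(p_2(U)\cap M)K|_\pi}.
\end{align*}

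Next I would massage the scalar $\dfrac{|M|_\pi\cdot|k_2(U)\cap M|_\pi}{|(p_2(U)\cap M)K|_\pi}$ into $\dfrac{|M|_\pi\cdot|(p_2(U)\cap K)|_\pi\cdot|k_1(U)|_\pi}{|p_1(U\ast(M\times 1))|_\pi\cdot|K|_\pi}$. The tool here is Corollary \ref{c4.1}, which for $N=1$ reads $|p_1(U\ast(M\times 1))|_\pi=\dfrac{|p_2(U)\cap M|_\pi\cdot|k_1(U)|_\pi}{|k_2(U)\cap M|_\pi}$ (using $|p_1(U\ast(M\times1))^\ast|=|p_1(U\ast(M\times1))|_\pi$). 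Solving this for $|k_2(U)\cap M|_\pi$ gives
\begin{align*}
|k_2(U)\cap M|_\pi=\dfrac{|p_2(U)\cap M|_\pi\cdot|k_1(U)|_\pi}{|p_1(U\ast(M\times1))|_\pi}.
\end{align*}
Plugging this in, the factor $|p_2(U)\cap M|_\pi$ cancels with the corresponding factor in the denominator coming from $|(p_2(U)\cap M)K|_\pi$, and by the second isomorphism theorem $|(p_2(U)\cap M)K|_\pi=\dfrac{|p_2(U)\cap M|_\pi\cdot|K|_\pi}{|(p_2(U)\cap M)\cap K|_\pi}=\dfrac{|p_2(U)\cap M|_\pi\cdot|K|_\pi}{|p_2(U)\cap K|_\pi}$, the last equality because $K\le M$ forces $(p_2(U)\cap M)\cap K=p_2(U)\cap K$. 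Assembling these substitutions yields exactly the claimed scalar.

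This is a routine computation with no real obstacle; the only point requiring a little care is bookkeeping the $\pi$-parts consistently — in particular remembering that $K\le M$ (so intersections with $K$ simplify) and that all of the relevant $\ast$-products of direct-product subgroups are again of direct-product form, so that Corollary \ref{c4.1} and Lemma \ref{l2.3} apply verbatim. The case $N\ne1$ is identical: as noted in the proof of Theorem \ref{c4.4}, $N$ plays no role in any of the order computations, it merely rides along in the second coordinate of the subcharacters, so the same manipulation gives the stated formula with $p_1(U\ast(M\times N))$ in place of $p_1(U\ast(M\times 1))$.
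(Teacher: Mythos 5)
Your proposal is correct and follows exactly the paper's (much terser) proof: specialise Theorem \ref{c4.4} using $\ell(k_2(U)\cap M)=|k_2(U)\cap M|_\pi$, then rewrite the scalar via Corollary \ref{c4.1} and the second isomorphism theorem. The paper states this in one line; your write-up simply fills in the same bookkeeping.
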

\begin{proof}
This follows from Corollary \ref{c4.1}, Theorem \ref{c4.4} and $\ell(k_2(U)\cap M)=|k_2(U)\cap M|_\pi$.  
\end{proof}
Now we are finally ready to prove our main theorem.
\begin{theorem}\label{t4.7}
Let $G$ be a finite abelian group, $A$ satisfy \cite[Hypothesis 10.1]{BC18} and for any prime $p$ dividing $|G|$ set  $$\ell(p)=\begin{cases}
    p, & \text{if } p\,\text{divides}\, |G|_\pi\\
   1, & \text{otherwise }
  \end{cases}.$$  Then $\mathbb{K}_\ell\Lambda^{A}_{G}$ is not semisimple.
\end{theorem}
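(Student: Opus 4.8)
The plan is to exhibit a proper submodule of the regular module $\mathbb{K}_\ell\Lambda^A_G$ that is not a direct summand, which is enough to conclude that $\mathbb{K}_\ell\Lambda^A_G$ (equivalently, by the last line of Section \ref{s2}, the algebra $\mathbb{K}B^A(G,G)$) is not semisimple. The natural candidates are built from the modules $S^\gamma_{M\times N}$ studied in this section, whose $\mathbb{K}_\ell\Lambda^A_G$-action is completely described by Theorem \ref{c4.4} and, for the present choice of $\ell$, by Corollary \ref{c4.5}. First I would assemble these into a filtration of $S^1$ (or of a suitable submodule of it) indexed by the kernel data: for each subgroup pair and each restriction character $\gamma$, the span of the $S^\gamma_{M\times N}$ with $|M|$ (or $|p_1(\cdot)|$) small forms a submodule by Proposition \ref{p2.2}, since the star product can only shrink thoraxes. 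The key numerical input is the coefficient $\dfrac{|M|_\pi\cdot|(p_2(U)\cap K)|_\pi\cdot|k_1(U)|_\pi}{|p_1(U\ast(M\times 1))|_\pi\cdot|K|_\pi}$ appearing in Corollary \ref{c4.5}: this integer can be strictly larger than $1$, and it is precisely this ``inflation factor'' that obstructs the existence of a complementary submodule.

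Concretely, I would take $G$ cyclic of prime order $q$ with $q\in\pi$ first, to isolate the mechanism. There $S^1$ has the two-dimensional submodule $S^1_{1,1}$ (in the notation of Section \ref{s2}, spanned by $s^{G,G}_{1\times 1}$ and by $\sum_{\varphi}s^{G,G}_{C_q\times 1,\varphi}=S_{C_q\times 1}$), and since $\ell(q)=q=|(C_q)^\ast|$, the computation in the proof of Lemma \ref{l3.3} shows exactly that $S^1_{1,1}$ fails to be simple: the line $\mathbb{K}\cdot s^{G,G}_{1\times 1}$ is a submodule (it is killed by every $s^{G,G}_{1\times C_q,\psi}$ precisely because $\lambda=\rho$), but it has no complement, because any putative complement would have to contain an element $\alpha$ with nonzero $s^{G,G}_{1\times 1}$-coefficient, and then the relations $c_{C_q\times 1,\psi^{-1}}=-c_{1\times 1}/\lambda$ forced by $s^{G,G}_{1\times C_q,\psi}\alpha=0$ are inconsistent with $s^{G,G}_{1\times 1}\alpha=0$ exactly when $\lambda=\rho$. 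So the obstruction at $\mathcal{K}=\{C_q\}$ is the boundary case of Lemma \ref{l3.3}, and the non-semisimplicity there is already visible.

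For general finite abelian $G$ with the given $\ell$, I would propagate this: choose a prime $q\mid|G|_\pi$ and a subgroup chain $M<N\leq G$ with $|N:M|=q$, and use Corollary \ref{c4.5} together with Lemma \ref{l4.6} to locate a pair $(S^{\gamma}_{M\times 1},S^{\gamma'}_{N\times 1})$ where the transition coefficient is $q$ rather than $1$ — this happens exactly when passing from $M$ to $N$ genuinely enlarges the projection $p_1(U\ast(-\times 1))$ for the relevant $U$, which by Lemma \ref{l4.6} pins down the kernel behaviour and makes the $\pi$-part bookkeeping in Corollary \ref{c4.5} yield a factor of $q$. The submodule generated by the ``small'' $S^\gamma_{M\times N}$'s is then a proper submodule on which the quotient action has a strictly larger multiplicity than a direct-sum decomposition would permit; formally I would compare $\dim_{\mathbb{K}}$ of the submodule and its candidate complement (both computable via the counting lemmas \ref{l4.a} and \ref{l4.3}) and show the multiplication map is not surjective onto a complement, i.e.\ the short exact sequence of $\mathbb{K}_\ell\Lambda^A_G$-modules does not split.

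The main obstacle I anticipate is the bookkeeping needed to choose the subgroup $U$, the character $\zeta$, and the chain $M<N$ so that the inflation factor in Corollary \ref{c4.5} is genuinely $>1$ \emph{and} so that the relevant $S^\gamma$-modules interact in a single indecomposable non-split extension rather than being cleanly separated by the thorax filtration; in other words, the delicate point is not any single computation but verifying that the filtration quotients fail to split \emph{simultaneously} — i.e.\ producing one explicit non-split extension rather than merely observing that various Cartan-type multiplicities look suspicious. I expect the $C_q$ case to serve as the base case and the general case to reduce to it by restricting attention to the sub-chain $\{1\}<C_q\leq G_\pi$ and invoking Theorem \ref{c4.4}/Corollary \ref{c4.5} to see that the same degenerate coefficient $\ell(q)=q=|(C_q)^\ast|$ reappears inside $\mathbb{K}_\ell\Lambda^A_G$.
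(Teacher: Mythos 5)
Your base case $G=C_q$ uses the right mechanism --- the boundary case of Lemma \ref{l3.3}, where the hypothesis forces $\ell(q)=\vert(C_q)^\ast\vert$ whether or not $q\in\pi$ --- but misidentifies the offending submodule. The line $\mathbb{K}\cdot s^{G,G}_{1\times 1}$ is neither a submodule of $S^1_{1,1}$ nor killed by the $s^{G,G}_{1\times C_q,\psi}$: one computes $s^{G,G}_{1\times C_q,\psi}\,s^{G,G}_{1\times 1}=s^{G,G}_{1\times 1}$ and $s^{G,G}_{C_q\times 1,\varphi}\,s^{G,G}_{1\times 1}=s^{G,G}_{C_q\times 1,\varphi}$, so the line is not stable. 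The element that does span a one-dimensional submodule when $\lambda=\rho$ is $\lambda s^{G,G}_{1\times 1}-S_{C_q\times 1}$ (the unique solution line of the linear system in the proof of Lemma \ref{l3.3}); the same system shows every proper submodule of $S^1_{1,1}$ is contained in that line, so $S^1_{1,1}$ is indecomposable of length two and the algebra is not semisimple. This part is fixable.

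The genuine gap is the general abelian $G$, which is the actual content of the theorem. Saying that a transition coefficient in Corollary \ref{c4.5} ``is $q$ rather than $1$'' and that dimensions of filtration quotients ``would not permit'' a splitting is not an argument: you must exhibit either a nonzero nilpotent ideal or a concrete non-split extension, and your proposal names the ingredients (Corollary \ref{c4.5}, Lemma \ref{l4.6}, an index-$p$ pair $M<N$) without producing the object. The paper's proof does this by forming, for index-$p$ pairs $(M,N)$ and $(P,Q)$, the alternating sums
\[
\alpha(\gamma,M,N,P,Q)=\frac{S^{\gamma}_{M\times P}}{\vert M\vert_\pi\cdot\vert P\vert_\pi}-\frac{S^{\gamma}_{M\times Q}}{\vert M\vert_\pi\cdot\vert Q\vert_\pi}-\frac{S^{\gamma}_{N\times P}}{\vert N\vert_\pi\cdot\vert P\vert_\pi}+\frac{S^{\gamma}_{N\times Q}}{\vert N\vert_\pi\cdot\vert Q\vert_\pi},
\]
and showing via Corollary \ref{c4.5} and Lemma \ref{l4.6} that left multiplication by any $s^{G,G}_{U,\zeta}$ sends such an element to $0$ when $p_1(U\ast(M\times 1))=p_1(U\ast(N\times 1))$ (the $\vert M\vert_\pi$-normalizations make the coefficients cancel) and to a scalar multiple of another such element otherwise; since these elements are supported on direct-product subgroups and direct-product $U$'s annihilate them, the ideal they generate squares to zero. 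If you prefer your reduction-to-$C_q$ strategy, the way to make it rigorous is not ``restricting attention to a sub-chain'' but idempotent truncation: for a subgroup $C_q\leq G$ the element $e=s^{G,G}_{\Delta(C_q),1}$ is idempotent with $e\,\mathbb{K}_\ell\Lambda^A_G\,e\cong\mathbb{K}_\ell\Lambda^A_{C_q}$, and a semisimple algebra has semisimple idempotent truncations; as written, your proposal asserts the reduction without any such justification.
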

\begin{proof}
Let $\mathcal{J}$ denote the set of all pairs  $(M,N)$ where $M<N\leq G$ with $|N:M|=p$. Given $(M,N), (P,Q)\in \mathcal{J},\, K\leq M,\,\gamma\in K^\ast$ set
\begin{align*}
\alpha(\gamma,M,N,P,Q)=\dfrac{S^{\gamma}_{M\times P}}{\vert M\vert_\pi\cdot\vert P\vert_\pi}-\dfrac{S^{\gamma}_{M\times Q}}{\vert M\vert_\pi\cdot\vert Q\vert_\pi}-\dfrac{S^{\gamma}_{N\times P}}{\vert N\vert_\pi\cdot\vert P\vert_\pi}+\dfrac{S^{\gamma}_{N\times Q}}{\vert N\vert_\pi\cdot\vert Q\vert_\pi}.
\end{align*}
We will establish that the ideal $\mathcal{P}$ generated by the elements of the set $$\mathcal{L}=\lbrace \alpha(\gamma,M,N,P,Q)\rbrace_{\substack{(M,N),(P,Q)\in\mathcal{J}\\ K\leq M,\,\gamma \in K^\ast}}$$
is nilpotent.

Let $U\leq G\times G,\,\zeta\in U^\ast,\,\varphi\in\widetilde{\gamma}^M,\,\psi\in\widetilde{\gamma}^N$ where $\zeta\ast\varphi$ and $\zeta\ast\psi$ are nonzero. First, suppose that $p_1(U\ast(M\times 1))=p_1(U\ast(N\times 1))=R$. Let $r_M$ and $r_N$ be the characters obtained from Lemma \ref{l4.3} for groups $M$ and $N$, respectively. We shall first establish that $r_M=r_N$. For $u\times nk\in U$ with $n\in k_2(U)\cap N,\,k\in K,$ let $m\in p_2(U)$ be such that $u\times m\in U$. Then $mk^{-1}n^{-1}\in k_2(U)$, $mk^{-1}\in k_2(U)\cap M$, which implies $m\in(k_2(U)\cap M)K$ so we obtain 
$$p_1(U\ast(k_2(U)\cap M)K)=p_1(U\ast (k_2(U)\cap N)K).$$
Now for an element $t\in p_1(U\ast(N\times 1))$, let $t\times g_1,\,t\times g_2\in U$ where $g_1\in( k_2(U)\cap N)K$ and $g_2\in (k_2(U)\cap M)K$. Then $$r_N(t)=\zeta(t\times g_1)\psi(g_1\times 1)=\zeta(t\times g_2)\psi(g_2\times 1)=\zeta(t\times g_2)(\gamma\sqcup\zeta^{-1}_2)(g_2)=r_M(t).$$ Hence $r_M=r_N$. Applying Corollary \ref{c4.5} gives
\begin{align*}
&s^{G,G}_{U,\zeta}(\dfrac{S^{\gamma}_{M\times P}}{|M|_\pi\cdot|P|_\pi}-\dfrac{S^{\gamma}_{N\times P}}{|N|_\pi\cdot|P|_\pi})\\
&=S^{r_M}_{R\times P}\left(\dfrac{|M|_\pi\cdot|p_2(U)\cap K|_\pi\cdot|k_1(U)|_\pi}{|R|_\pi\cdot|K|_\pi\cdot|M|_\pi\cdot|P|_\pi}-\dfrac{|N|_\pi\cdot|p_2(U)\cap K|_\pi\cdot|k_1(U)|_\pi}{|R|_\pi\cdot|K|_\pi\cdot|N|_\pi\cdot|P|_\pi}\right)=0.
\end{align*} 
So $s^{G,G}_{U,\zeta}\alpha(\gamma,M,N,P,Q)=0$. 
Now let $R=p_1(U\ast(M\times 1))<p_1(U\ast(N\times 1))=T$. Then by Lemma \ref{l4.6}, we obtain $k_2(U)\cap M=k_2(U)\cap N$, the argument in the previous case gives $r_M=r_N$ as $p_1(U\ast(k_2(U)\cap M)K)=p_1(U\ast(k_2(U)\cap N)K)$ . Applying Corollary \ref{c4.5} yields
\begin{align*}
&s^{G,G}_{U,\zeta}\alpha(\gamma,M,N,P,Q)=\dfrac{|p_2(U)\cap K|_\pi\cdot |k_1(U)|_\pi}{|K|_\pi}
\bigg(\dfrac{S^{r_M}_{R\times P}|M|_\pi}{|M|_\pi\cdot|R|_\pi\cdot|P|_\pi}-\dfrac{S^{r_M}_{R\times Q}|M|_\pi}{|M|_\pi\cdot|R|_\pi\cdot|Q|_\pi}\\
&-\dfrac{S^{r_N}_{T\times P}|N|_\pi}{|N|_\pi\cdot|T|_\pi\cdot|P|_\pi}+\dfrac{S^{r_N}_{T\times Q}|N|_\pi}{|N|_\pi\cdot|T|_\pi\cdot|Q|_\pi}\bigg)=\dfrac{|p_2(U)\cap K|_\pi\cdot|k_1(U)|_\pi}{|K|_\pi}\alpha(r_M,R,T,P,Q).
\end{align*}
Hence elements of  $\mathcal{P}$  are $\mathbb{K}$-linear combinations of elements of $\mathcal{L}$.
To establish the nilpotency of $\mathcal{P}$, let $U$ be a direct product. Then $p_1(U\ast(M\times 1))=p_1(U)=p_1(U\ast(N\times 1))$ so from the case in the beginning, we get $ s^{G,G}_{U,\zeta}\alpha=0$. Notice that the elements of $\mathcal{L}$ are $\mathbb{K}$-linear combinations of direct products, thus $\mathcal{P}$ is nilpotent.
\end{proof}
An immediate consequence is the following theorem.
\begin{corol}\label{c4.8}
Let $G$ be a finite abelian group, $A$ an abelian group satisfying \cite[Hypothesis 10.1]{BC18}. If $\pi$ contains every prime divisor of $|G|$, then the $A$-fibred double Burnside algebra $\mathbb{K} B^A(G,G)$ is not semisimple.
\end{corol}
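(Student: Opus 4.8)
The plan is to deduce this directly from Theorem \ref{t4.7} by identifying $\mathbb{K}B^A(G,G)$ with $\mathbb{K}_\ell\Lambda^A_G$ for the particular monoid homomorphism $\ell$ appearing there. First, suppose $\pi$ contains every prime divisor of $|G|$. Then $|G|_\pi=|G|$ and $|G|_{\pi'}=1$, so every prime $p$ dividing $|G|$ divides $|G|_\pi$; consequently the piecewise $\ell:\mathbb{N}^{+}\to\mathbb{K}^\times$ prescribed in Theorem \ref{t4.7} satisfies $\ell(p)=p$ for every prime divisor $p$ of $|G|$.

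Next I would check that for this $\ell$ one has $\mathbb{K}_\ell\Lambda^A_G\cong\mathbb{K}B^A(G,G)$ as $\mathbb{K}$-algebras. The structure constants of $\mathbb{K}_\ell\Lambda^A_G$ involve $\ell$ only through the factor $\ell(k_2(U)\cap k_1(V))$, i.e. through $\ell$ evaluated on the order of a subgroup of $G$, hence on a natural number all of whose prime divisors divide $|G|$. Since $\ell$ is a monoid homomorphism agreeing with the inclusion $\mathbb{N}^{+}\hookrightarrow\mathbb{K}^\times$ on each such prime, it agrees with the inclusion on every order that occurs. Thus $\mathbb{K}_\ell\Lambda^A_G$ coincides with the undeformed algebra $\mathbb{K}\Lambda^A_{\{G\}}$. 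Because $G$ is abelian, the remark at the end of Section \ref{s2} gives $\mathbb{K}\Lambda^A_{\{G\}}=\overline{\mathbb{K}\Lambda^A_{\{G\}}}\cong\mathbb{K}B^A_{\{G\}}$, and the fibred biset category on the single object $G$ is exactly the endomorphism algebra $\mathbb{K}B^A(G,G)$.

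Finally, Theorem \ref{t4.7} says that $\mathbb{K}_\ell\Lambda^A_G$ is not semisimple for precisely this $\ell$, and semisimplicity is an invariant of $\mathbb{K}$-algebra isomorphism, so $\mathbb{K}B^A(G,G)$ is not semisimple. There is no real obstacle here beyond this bookkeeping; the only point requiring a moment's care is verifying that the local recipe for $\ell$ in Theorem \ref{t4.7} genuinely reduces to the inclusion on all orders entering the multiplication of $\mathbb{K}_\ell\Lambda^A_G$, which is immediate from the explicit composition formula of Section \ref{s2}.
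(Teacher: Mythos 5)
Your proposal is correct and follows exactly the paper's route: the paper's proof likewise observes that when $\pi$ contains every prime divisor of $|G|$ the prescribed $\ell$ satisfies $\ell(p)=p$ on all relevant primes, so $\mathbb{K}_\ell\Lambda^A_G\cong\mathbb{K}B^A(G,G)$ for abelian $G$, and then invokes Theorem \ref{t4.7}. Your write-up merely spells out the bookkeeping in more detail.
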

\begin{proof}
If $\ell(p)=p$ for every prime divisor of $|G|$ then $\mathbb{K} B^A(G,G)\cong\mathbb{K}_\ell\Lambda^A_G$ as $G$ is abelian. 
\end{proof}

\section{The subalgebra $\mathbb{K}_\ell\Lambda^{=}_{C_q}$}

For a finite group  $G$, let $\mathbb{K}_\ell\Lambda^=_{G}$ denote the subalgebra of $\mathbb{K}_\ell\Lambda^A_{G}$ generated by the subcharacters $(G,\varphi)$ where $1<V\leq G \times G,\, q(V)\cong G,\,\varphi\in V^\ast$. Another description for $\mathbb{K}_\ell\Lambda^=_{G}$ is the quotient algebra $\mathbb{K}_\ell\Lambda^A_{G}/\mathbb{K}_\ell\Lambda^A_{<G}$ where $\Lambda^A_{<G}$ denotes the $\Lambda^A_{G}$-ideal generated by elements whose thoraxes are strictly smaller than $G$. Notice that the homomorphism $\ell$ has no effect on $\mathbb{K}_\ell\Lambda^=_{G}$ as the subgroup $V$ is twisted diagonal, hence its kernel subgroups are trivial. 

Our aim is to give descriptions for the simple $\mathbb{K}_\ell\Lambda^=_{C_q}$-modules and a crucial tool is the next theorem, which was proven for fibred bisets, i.e., when $\ell(n)=n$ for all $n\in\mathbb{N}^+$, but by the comments made above, it holds for any choice of $\ell$:
\begin{theorem}\cite[Theorem 4]{OY19}
Let $G$ be a finite abelian group and $A$ contain an element whose order is equal to the exponent of $G$. Then $\mathbb{K}_\ell\Lambda^=_G\cong\mathbb{K}\left[G^\ast\rtimes \mathrm{Out}(G)\right]$ where the action of $\lambda\in\mathrm{Out}(G)$ on $\varphi\in G^\ast$ is given as $(\lambda\varphi)(g)=\varphi(\lambda(g))$ for all $g\in G$.
\end{theorem}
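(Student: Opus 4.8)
The plan is to make $\mathbb{K}_\ell\Lambda^=_G$ completely explicit --- first as a $\mathbb{K}$-vector space with a distinguished basis, then via its structure constants --- and read off the group algebra from there; in spirit this is the classical fact that the ``twist part'' of a biset-type category is a group algebra, so the real work is bookkeeping. Since $G$ is abelian, $\mathrm{Inn}(G)=1$ and so $\mathrm{Out}(G)=\mathrm{Aut}(G)$, which I use interchangeably below.

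First I would pin down the basis. If $V\leq G\times G$ has $q(V)\cong G$, then $|q(V)|=|G|$ together with $|V|=|q(V)|\cdot|k_1(V)|\cdot|k_2(V)|$ and $p_i(V)\leq G$ forces $p_1(V)=p_2(V)=G$ and $k_1(V)=k_2(V)=1$; by Goursat's theorem $V$ is then the graph of an automorphism, which I parametrise as $\Delta_\alpha(G)=\{(\alpha(g),g):g\in G\}$ for a unique $\alpha\in\mathrm{Aut}(G)$. The elements $s^{G,G}_{\Delta_\alpha(G),\varphi}$, $\alpha\in\mathrm{Aut}(G)$, $\varphi\in(\Delta_\alpha(G))^\ast$, are pairwise distinct subcharacters, hence linearly independent in $\mathbb{K}_\ell\Lambda^A_G$, and their span is multiplicatively closed (established next), so they form a $\mathbb{K}$-basis of $\mathbb{K}_\ell\Lambda^=_G$. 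Transporting characters along the isomorphism $G\to\Delta_\alpha(G)$, $g\mapsto(\alpha(g),g)$, identifies $(\Delta_\alpha(G))^\ast$ with $G^\ast=\mathrm{Hom}(G,A)$, so this basis is indexed by $G^\ast\times\mathrm{Aut}(G)$; in particular $\dim_{\mathbb{K}}\mathbb{K}_\ell\Lambda^=_G=|G^\ast|\cdot|\mathrm{Aut}(G)|=\dim_{\mathbb{K}}\mathbb{K}[G^\ast\rtimes\mathrm{Out}(G)]$.

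Next I would compute the structure constants. For twisted-diagonal $V,W$ one has $k_2(V)\cap k_1(W)=1$, so the deformation coefficient is $\ell(1)=1$ --- as already observed before the statement, $\ell$ has no effect here --- and the relatedness hypothesis of Theorem \ref{tmackey} is vacuous; hence the $\ast$-product of two basis elements is a single basis element, which also supplies the multiplicative closure used above. A direct computation of the $\ast$-product of subcharacters then yields $\Delta_\alpha(G)\ast\Delta_\beta(G)=\Delta_{\alpha\beta}(G)$ and, writing $\bar\varphi\in G^\ast$ for the transport $\bar\varphi(g)=\varphi(\alpha(g),g)$ of $\varphi\in(\Delta_\alpha(G))^\ast$ (and $\bar\psi$ similarly), the identity $\overline{\varphi\ast\psi}=(\bar\varphi\circ\beta)\cdot\bar\psi$. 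This is exactly the multiplication of $G^\ast\rtimes\mathrm{Out}(G)$ for the action $(\lambda\varphi)(g)=\varphi(\lambda(g))$.

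Finally I would assemble the isomorphism: let $\Phi\colon\mathbb{K}[G^\ast\rtimes\mathrm{Out}(G)]\to\mathbb{K}_\ell\Lambda^=_G$ be the $\mathbb{K}$-linear map with $\Phi(\bar\varphi,\alpha)=s^{G,G}_{\Delta_\alpha(G),\varphi}$, $\varphi$ the character matching $\bar\varphi$. By the basis description $\Phi$ carries the standard basis of the group algebra bijectively onto the basis of $\mathbb{K}_\ell\Lambda^=_G$, by the structure-constant computation it is multiplicative, and it sends $(1,\mathrm{id}_G)$ to $s^{G,G}_{\Delta(G),1}$, the identity of $\mathbb{K}_\ell\Lambda^A_G$ and hence of $\mathbb{K}_\ell\Lambda^=_G$; so $\Phi$ is an isomorphism of $\mathbb{K}$-algebras. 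I expect the only genuine obstacle to lie in the structure-constant step: one has to expand $\varphi\ast\psi$ honestly from the definition, choosing the intermediate element $g\in p_2(U)\cap p_1(V)$ explicitly, and then line up all the conventions --- which coordinate projection trivialises $\Delta_\alpha(G)$, in which order the automorphisms compose, and whether an $\alpha$ or an $\alpha^{-1}$ is needed to turn the resulting twist into the action $(\lambda\varphi)(g)=\varphi(\lambda(g))$ --- so that the formula matches the stated normalisation of the semidirect product; the rest is routine.
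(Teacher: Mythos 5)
Your argument is correct, but it is worth noting that the paper does not actually prove this statement: it is imported wholesale from Co\c{s}kun--Y{\i}lmaz \cite[Theorem 4]{OY19}, and the only thing the paper adds is the one-line observation (made just before the theorem) that the deformation $\ell$ is invisible on $\mathbb{K}_\ell\Lambda^=_G$ because twisted diagonal subgroups have trivial kernel subgroups, so the undeformed result transfers. You instead give a self-contained proof from the definitions, and it goes through: Goursat correctly forces $q(V)\cong G$ to mean $V=\Delta_\alpha(G)$ for $\alpha\in\mathrm{Aut}(G)=\mathrm{Out}(G)$; the relatedness condition and the coefficient $\ell(k_2(V)\cap k_1(W))=\ell(1)=1$ make each product of basis subcharacters a single basis subcharacter (this is where your argument subsumes the paper's remark about $\ell$); and the transport computation $\overline{\varphi\ast\psi}=(\bar\varphi\circ\beta)\cdot\bar\psi$ exhibits the basis as a group isomorphic to $G^\ast\rtimes\mathrm{Out}(G)$. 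The residual convention-matching you flag (left versus right action, $\alpha$ versus $\alpha^{-1}$) genuinely does not affect the isomorphism type of the semidirect product, so leaving it as routine is acceptable. Two small remarks: your identity $|V|=|q(V)|\cdot|k_1(V)|\cdot|k_2(V)|$ is a detour --- $|p_1(V)|=|q(V)|\cdot|k_1(V)|\le|G|$ already gives $k_1(V)=1$ and $p_1(V)=G$ directly; and your proof never uses the hypothesis that $A$ contains an element of order equal to the exponent of $G$, which in this paper's formulation (where $G^\ast$ means $\mathrm{Hom}(G,A)$) is inherited from the citation rather than needed, so your route in fact yields a marginally more general statement.
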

We therefore work with the case where the abelian group $A$ contains a $q$th root of unity. We shall proceed by constructing the character table of $H=C_q\rtimes\mathrm{Out}(C_q)$. For an element of $(\mathbb{Z}/q\mathbb{Z})^\ast$, let $\Delta(G,i)$ denote the subgroup $\langle (x,x^i)\rangle$ and for $j\in\mathbb{Z}/q\mathbb{Z}$, $(\Delta(G,i),j)$ the subcharacter with homomorphism $\varphi((x,x^i))=a^j$ where $a$ is a generator for the $q$-torsion part of $A$. Also set $H=C_q\rtimes C_{q-1}$, $f=s^{C_q,C_q}_{\Delta (C_q,1),1}$ and $g=s^{C_q,C_q}_{\Delta(C_q,z),0}$ for a primitive root $z$ of the multiplicative group $(\mathbb{Z}/q\mathbb{Z})^\times$. The presentation of $H$ is as follows;
\begin{align*}
H=<f,g\,|\,f^q=g^{q-1}=1, gfg^{-1}=f^z>.
\end{align*}
It is a straightforward calculation to show that the conjugacy classes and the sizes of their centralizers are
$$\begin{array}{|c|c|c|c|c|c|c|}
\hline
&1 & (f)& (g)& (g^2)& \cdots & (g^{q-2})
 \\
 \hline
|C_H(h)|&q(q-1) & q & q-1& q-1 & \cdots & q-1\\
\hline
\end{array}.$$
$H$ has $|H/{C_q}|=q-1$ linear representations thus there is only one non-linear character and it is of degree $q-1$. Its values can be found from the orthogonality relations. Let $\mu$ be a primitive $(q-1)$th root of unity. The character table of $H$ is as follows:
$$\begin{array}{|c|c|c|c|c|c|c|}
\hline
 & 1 & (f)& (g)& (g^2)& \cdots & (g^{q-2})
 \\
 \hline
C_H(h)&q(q-1) & q & q-1& q-1 & \cdots & q-1\\
 \hline
\chi_1  &1 & 1 & 1& 1 & \cdots & 1\\
 \hline
\chi_2 & 1 & 1& \mu & \mu^2 &\cdots &\mu^{q-2}\\
 \hline
\vdots & \vdots & \vdots & \vdots& \vdots & \vdots & \vdots\\
 
\chi_{q-1} & 1 &1& \mu^{q-2} & \mu^{({q-2})^2} &\cdots & \mu^{({q-2})^{q-2}}\\
 \hline
\chi_q & q-1 & -1& 0 & 0 & \cdots &0\\
\hline
\end{array}.$$
The immediate consequence is the following theorem.
\begin{theorem}\label{t5.5}
As $\mathbb{K}$-algebras, we have
\begin{align*}
\mathbb{K}_\ell\Lambda^{=}_{C_q}\cong\mathrm{Mat}_{q-1}(\mathbb{K})\oplus \mathbb{K}^{q-1}.
\end{align*} 
\end{theorem}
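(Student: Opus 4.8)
The statement to be proven is Theorem~\ref{t5.5}, asserting that $\mathbb{K}_\ell\Lambda^=_{C_q}\cong\mathrm{Mat}_{q-1}(\mathbb{K})\oplus\mathbb{K}^{q-1}$ as $\mathbb{K}$-algebras. The plan is to read this off directly from the character table of $H=C_q\rtimes C_{q-1}$ computed just above, together with the identification $\mathbb{K}_\ell\Lambda^=_{C_q}\cong\mathbb{K}H$ supplied by \cite[Theorem 4]{OY19} (valid here because $A$ contains a $q$th root of unity and the action of $\ell$ is trivial on twisted diagonal subgroups, as noted at the start of Section~5).

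\textbf{Key steps.} First I would invoke \cite[Theorem 4]{OY19} to replace $\mathbb{K}_\ell\Lambda^=_{C_q}$ by the group algebra $\mathbb{K}H$, where $H\cong C_q\rtimes\mathrm{Out}(C_q)\cong C_q\rtimes C_{q-1}$ since $\mathrm{Out}(C_q)\cong(\mathbb{Z}/q\mathbb{Z})^\times\cong C_{q-1}$. Second, since $\mathbb{K}$ has characteristic zero and contains sufficiently many roots of unity (in particular a $(q-1)$th root of unity $\mu$ and a $q$th root of unity, so $\mathbb{K}$ is a splitting field for $H$), Wedderburn–Artin gives $\mathbb{K}H\cong\bigoplus_i\mathrm{Mat}_{d_i}(\mathbb{K})$ where the $d_i$ are the degrees of the irreducible $\mathbb{K}$-characters of $H$. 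Third, I would simply cite the character table: $H$ has $q-1$ linear characters $\chi_1,\dots,\chi_{q-1}$ of degree $1$ and one character $\chi_q$ of degree $q-1$ (the count $q$ of irreducibles matches the $q$ conjugacy classes $1,(f),(g),\dots,(g^{q-2})$, and the degree of $\chi_q$ is forced by $|H|=q(q-1)=\sum d_i^2=(q-1)\cdot 1+d_q^2$, giving $d_q=q-1$). Assembling the Wedderburn blocks yields one block $\mathrm{Mat}_{q-1}(\mathbb{K})$ from $\chi_q$ and $q-1$ copies of $\mathrm{Mat}_1(\mathbb{K})=\mathbb{K}$ from the linear characters, i.e.
\begin{align*}
\mathbb{K}_\ell\Lambda^=_{C_q}\cong\mathbb{K}H\cong\mathrm{Mat}_{q-1}(\mathbb{K})\oplus\mathbb{K}^{q-1}.
\end{align*}

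\textbf{Main obstacle.} There is essentially no obstacle here: the genuine content was already carried out in producing the conjugacy class data and the character table of $H$, and in establishing the isomorphism $\mathbb{K}_\ell\Lambda^=_{C_q}\cong\mathbb{K}H$ via \cite[Theorem 4]{OY19}. The only point requiring a word of care is verifying that $\mathbb{K}$ is a splitting field for $H$, so that the Wedderburn decomposition has all blocks over $\mathbb{K}$ itself rather than over extension division rings; this follows from the standing hypothesis that $\mathbb{K}$ has characteristic zero with sufficiently many roots of unity, since the entries of the character table of $H$ already lie in $\mathbb{K}(\mu)\subseteq\mathbb{K}$ and a character field being contained in $\mathbb{K}$ together with $\mathrm{char}\,\mathbb{K}=0$ forces the Schur indices to be trivial. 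Hence the proof is a one-line application of Wedderburn–Artin to the data already on the table.
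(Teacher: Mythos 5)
Your proposal is correct and follows exactly the paper's route: the paper likewise invokes \cite[Theorem 4]{OY19} to identify $\mathbb{K}_\ell\Lambda^=_{C_q}$ with $\mathbb{K}[(C_q)^\ast\rtimes\mathrm{Out}(C_q)]\cong\mathbb{K}[C_q\rtimes C_{q-1}]$ and then reads the Wedderburn decomposition off the character table, calling the theorem an ``immediate consequence.'' Your added remark on the splitting-field issue is a reasonable extra precaution but not a divergence in method.
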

Thus for $1\leq i\leq q-1$, the generators of the one-dimensional $\mathbb{K}_\ell\Lambda^{=}_{C_q}$-modules can be given as $\sum_{h\in H}\chi_i(h^{-1})h$ .
Next, we give the explicit descriptions for the $(q-1)$-dimensional simple $\mathbb{K}_\ell\Lambda^=_{C_q}$-modules. Given a $q$th root of unity $\omega$, we let $e^{\omega}_1=\sum_{0\leq r\leq q-1}\omega^rf^r$ and for $1\leq i\leq q-1$, set $e^{\omega}_i=s^{C_q,C_q}_{\Delta(C_q,i),0}e^{\omega}_i$.

We shall provide the following two lemmas to describe the change in the coefficients of the summands of $e_1^\omega$ after multiplying it with a basis element.

\begin{lemm}
The elements $e^\omega_i$ can more explicitly be described as $e_i^\omega=\displaystyle{\sum_{0\leq j\leq q-1}}s^{C_q,C_q}_{\Delta(C_q,i),j}\omega^{i^{-1}j}.$
\end{lemm}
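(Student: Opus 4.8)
The plan is to unwind the definition $e_i^\omega = s^{C_q,C_q}_{\Delta(C_q,i),0}\,e_1^\omega$ and reduce everything to the $\ast$-product of twisted diagonal subcharacters of $C_q\times C_q$. Since all the subgroups that occur are twisted diagonal, their kernel subgroups are trivial; hence the relatedness condition is automatic, the coefficient $\ell(k_2(U)\cap k_1(V))$ equals $1$, and no element ever leaves $\mathbb{K}_\ell\Lambda^=_{C_q}$. So the only content is to compute the group part and the character part of each product.

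First I would record that $f^r = s^{C_q,C_q}_{\Delta(C_q,1),r}$ for $0\le r\le q-1$ (exponents read mod $q$), by induction on $r$. For the group part, if $U=V=\Delta(C_q,1)=\langle(x,x)\rangle$ then the only $g$ with $x\times g\in U$ and $g\times x\in V$ is $g=x$, so $U\ast V=\langle(x,x)\rangle=\Delta(C_q,1)$; for the character part, $(\varphi\ast\psi)((x,x))=\varphi((x,x))\psi((x,x))$, which turns the exponent $r-1$ of $f^{r-1}$ and the exponent $1$ of $f$ into $r$. The base case is $f^0 = s^{C_q,C_q}_{\Delta(C_q,1),0}$, the identity of $\mathbb{K}_\ell\Lambda^=_{C_q}$.

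Next I would compute the single product $s^{C_q,C_q}_{\Delta(C_q,i),0}\cdot s^{C_q,C_q}_{\Delta(C_q,1),r}$. With $U=\Delta(C_q,i)=\langle(x,x^i)\rangle$ and $V=\Delta(C_q,1)=\langle(x,x)\rangle$, the generator $(x,x^i)$ of $U\ast V$ is witnessed by $g=x^i$ (indeed $x\times x^i\in U$ and $x^i\times x^i\in V$), so $U\ast V=\Delta(C_q,i)$, and the character is $(\varphi\ast\psi)((x,x^i))=\varphi((x,x^i))\,\psi((x^i,x^i))=1\cdot\psi((x,x))^i=(a^r)^i=a^{ri}$. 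Hence $s^{C_q,C_q}_{\Delta(C_q,i),0}\cdot s^{C_q,C_q}_{\Delta(C_q,1),r}=s^{C_q,C_q}_{\Delta(C_q,i),ri}$, the exponent again read mod $q$.

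Finally, combining these with $e_1^\omega=\sum_{0\le r\le q-1}\omega^r f^r=\sum_{0\le r\le q-1}\omega^r s^{C_q,C_q}_{\Delta(C_q,1),r}$ gives $e_i^\omega=\sum_{0\le r\le q-1}\omega^r s^{C_q,C_q}_{\Delta(C_q,i),ri}$. Since $i$ is invertible in $\mathbb{Z}/q\mathbb{Z}$, the substitution $j\equiv ri\pmod q$, i.e.\ $r\equiv i^{-1}j$, is a bijection of $\{0,\dots,q-1\}$ with itself and yields $e_i^\omega=\sum_{0\le j\le q-1}\omega^{i^{-1}j}s^{C_q,C_q}_{\Delta(C_q,i),j}$, as claimed. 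The argument is essentially routine; the one place demanding care is the character part of the $\ast$-product in the third paragraph, where the slope $i$ forces $\psi$ to be evaluated at $(x^i,x^i)$ rather than at $(x,x)$, producing the factor $a^{ri}$ that is ultimately responsible for the twist $i^{-1}$ in the exponent.
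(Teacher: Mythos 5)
Your proposal is correct and follows essentially the same route as the paper: compute $s^{C_q,C_q}_{\Delta(C_q,i),0}\,s^{C_q,C_q}_{\Delta(C_q,1),r}=s^{C_q,C_q}_{\Delta(C_q,i),ir}$ and then reindex via $j\equiv ir\pmod q$. The only difference is that you spell out the ``direct calculation'' (the witness $g=x^i$ and the evaluation $\psi((x^i,x^i))=a^{ri}$) that the paper leaves implicit.
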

\begin{proof}
Direct calculation yields $s^{C_q,C_q}_{\Delta(C_q,i),0}s^{C_q,C_q}_{\Delta(C_q,1),r}=s^{C_q,C_q}_{\Delta(C_q,i),ir}$, $e_i^\omega=\sum_{0\leq r\leq q-1}s_{\Delta(G,i),ir}\omega^{r}$. Now reindex by letting $j\equiv ir\,(\mathrm{mod}\,q)$, which  gives $r\equiv i^{-1}j\, (\mathrm{mod}\,q)$ whence the result follows.
\end{proof}
\begin{lemm}\label{l6.2}
We have $s^{C_q,C_q}_{\Delta(C_q,1),k}e_i^\omega =e_i^\omega \omega^{-i^{-1}k}$
\end{lemm}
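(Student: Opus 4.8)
The plan is to reduce everything to a single product rule for twisted diagonal subcharacters of $C_q\times C_q$ and then substitute the explicit formula for $e_i^\omega$ from the previous lemma.

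First I would record the relevant $\ast$-products of subgroups. Since $\Delta(C_q,1)$ and $\Delta(C_q,i)$ are twisted diagonal, all of their kernel subgroups are trivial; hence relatedness is automatic, the coefficient $\ell(k_2\cap k_1)=\ell(1)=1$ disappears, and Theorem \ref{tmackey} has a single term. Writing a general element of $\Delta(C_q,m)$ as $(x^s,x^{ms})$, the definition of $\ast$ gives $\Delta(C_q,m)\ast\Delta(C_q,n)=\Delta(C_q,mn)$; in particular $\Delta(C_q,1)\ast\Delta(C_q,i)=\Delta(C_q,i)$.

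Second I would compute the twisted character. On the generator $(x,x^i)$ of $\Delta(C_q,i)$, choosing the witness $g=x$ gives $(x,g)=(x,x)\in\Delta(C_q,1)$ and $(g,x^i)=(x,x^i)\in\Delta(C_q,i)$, so by the definition of $\varphi\ast\psi$ the resulting character sends $(x,x^i)$ to $a^k\cdot a^j=a^{k+j}$, where $a$ is the fixed generator of the $q$-torsion subgroup of $A$. Reading off its label on the generator of $\Delta(C_q,i)$ yields
\[
s^{C_q,C_q}_{\Delta(C_q,1),k}\,s^{C_q,C_q}_{\Delta(C_q,i),j}=s^{C_q,C_q}_{\Delta(C_q,i),k+j},
\]
with the second index read modulo $q$. (The same argument gives the general rule $s^{C_q,C_q}_{\Delta(C_q,m),b}\,s^{C_q,C_q}_{\Delta(C_q,n),d}=s^{C_q,C_q}_{\Delta(C_q,mn),b+md}$, which also recovers the identity used in the proof of the previous lemma.)

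Finally I would substitute the explicit description $e_i^\omega=\sum_{0\le j\le q-1}s^{C_q,C_q}_{\Delta(C_q,i),j}\,\omega^{i^{-1}j}$ and apply the product rule termwise to obtain
\[
s^{C_q,C_q}_{\Delta(C_q,1),k}\,e_i^\omega=\sum_{0\le j\le q-1}s^{C_q,C_q}_{\Delta(C_q,i),k+j}\,\omega^{i^{-1}j}.
\]
Reindexing by $j'\equiv j+k\pmod q$, which permutes $\mathbb{Z}/q\mathbb{Z}$, turns the right-hand side into $\omega^{-i^{-1}k}\sum_{0\le j'\le q-1}s^{C_q,C_q}_{\Delta(C_q,i),j'}\,\omega^{i^{-1}j'}=\omega^{-i^{-1}k}\,e_i^\omega$, as claimed. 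The computation is routine; the only points requiring care are evaluating the twisted characters on the actual generator of $\Delta(C_q,i)$ (rather than on an arbitrary element) and keeping the labels and the exponent $i^{-1}j$ consistent modulo $q$ through the reindexing. I do not foresee a genuine obstacle.
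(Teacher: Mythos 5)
Your proposal is correct and follows essentially the same route as the paper: establish the product rule $s^{C_q,C_q}_{\Delta(C_q,1),k}s^{C_q,C_q}_{\Delta(C_q,i),j}=s^{C_q,C_q}_{\Delta(C_q,i),j+k}$, apply it termwise to the explicit expansion $e_i^\omega=\sum_{j}s^{C_q,C_q}_{\Delta(C_q,i),j}\omega^{i^{-1}j}$, and reindex modulo $q$. You merely spell out the verification of the product rule from the definition of the $\ast$-product in more detail than the paper, which simply asserts it as a direct calculation.
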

\begin{proof}
Similar to the previous lemma, we calculate $s^{C_q,C_q}_{\Delta(C_q,1),k}s^{C_q,C_q}_{\Delta(C_q,i),j}=s^{C_q,C_q}_{\Delta(C_q,i),j+k}$. Hence 
\begin{align*}
s^{C_q,C_q}_{\Delta(C_q,1),k}e_i^\omega =\displaystyle{\sum_{0\leq j\leq q-1}s^{C_q,C_q}_{\Delta(C_q,i),j+k}\omega^{i^{-1}j}}.
\end{align*}  Reindexing by letting $t\equiv j+k\,(\mathrm{mod}\,q)$ gives $i^{-1}j\equiv i^{-1}(t-k)\, (\mathrm{mod}\,q)$, from which the result follows.
\end{proof}

We let $M_\omega$ be the $\mathbb{K}_\ell\Lambda^=_{C_q}$-module with basis $\left\lbrace  e_i^\omega\right\rbrace _{1\leq i\leq q-1}$. These elements constitute a basis since for any $0\leq r\leq q-1$
\begin{align*}
s^{C_q,C_q}_{\Delta(C_q,i),r}e_1^\omega=s^{C_q,C_q}_{\Delta(C_q,1),r}s^{C_q,C_q}_{\Delta(C_q,i),0}e_1^\omega=s^{C_q,C_q}_{\Delta(C_q,1),r}e_i^\omega=\omega^{-{i}^{-1}r}e_i^\omega.
\end{align*} 
The linear independence of $\left\lbrace e_i^\omega\right\rbrace_{1\leq i\leq q-1}$ is clear from the definition as the summands of $e^{\omega}_i$ are distinct from $e^{\omega}_j$ for $i\not=j$.
\begin{lemm}
The $\mathbb{K}_\ell\Lambda^=_{C_q}$-module $M_\omega$ is simple.
\end{lemm}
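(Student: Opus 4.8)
The plan is to show that any nonzero submodule of $M_\omega$ must contain $e_1^\omega$, and since $e_1^\omega$ generates the whole module (by the displayed identities preceding the lemma, $s^{C_q,C_q}_{\Delta(C_q,i),r}e_1^\omega=\omega^{-i^{-1}r}e_i^\omega$ runs over a scalar multiple of every basis element as $i$ ranges over $1,\dots,q-1$ and $r$ over $0,\dots,q-1$), this forces the submodule to be all of $M_\omega$. So the crux is: starting from an arbitrary nonzero element $\alpha=\sum_{i=1}^{q-1}c_i e_i^\omega$, use the algebra action to isolate a single basis vector.

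\textbf{Key steps.} First I would record the action of the two natural families of generators on the basis $\{e_i^\omega\}$. From Lemma \ref{l6.2}, left multiplication by $s^{C_q,C_q}_{\Delta(C_q,1),k}$ acts diagonally, sending $e_i^\omega\mapsto\omega^{-i^{-1}k}e_i^\omega$; note that as $i$ ranges over $1,\dots,q-1$, the exponents $i^{-1}$ (mod $q$) are exactly $1,\dots,q-1$ in some order, so the diagonal entries $\omega^{-i^{-1}k}$ are, for a fixed primitive root $\omega$ and varying $k$, the rows of a Vandermonde-type matrix in the distinct scalars $\omega^{-1},\omega^{-2},\dots,\omega^{-(q-1)}$. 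Concretely, applying $s^{C_q,C_q}_{\Delta(C_q,1),k}$ for $k=0,1,\dots,q-2$ to $\alpha$ produces $q-1$ vectors whose coefficient matrix (in the $c_i e_i^\omega$) is $\big(\omega^{-i^{-1}k}\big)_{k,i}$; this is nonsingular because the $\omega^{-i^{-1}}$ are pairwise distinct $q$th roots of unity (here one uses $\omega\neq 1$ — if $\omega=1$ the relevant module is the one already handled as part of $S^1$, or is excluded, so we may assume $\omega$ is a primitive $q$th root of unity). Hence from the span of $\{s^{C_q,C_q}_{\Delta(C_q,1),k}\alpha\}_k$ we can solve for each individual $c_i e_i^\omega$, and since some $c_i\neq 0$, the submodule generated by $\alpha$ contains $e_i^\omega$ for that $i$. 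Finally, $s^{C_q,C_q}_{\Delta(C_q,i^{-1}),0}e_i^\omega$ returns (a scalar multiple of) $e_1^\omega$ — this follows from the multiplication rule $s^{C_q,C_q}_{\Delta(C_q,a),0}s^{C_q,C_q}_{\Delta(C_q,b),0}=s^{C_q,C_q}_{\Delta(C_q,ab),0}$ computed in the preceding lemmas, composed with the explicit description $e_i^\omega = s^{C_q,C_q}_{\Delta(C_q,i),0}e_1^\omega$. So every nonzero submodule contains $e_1^\omega$, hence equals $M_\omega$.

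\textbf{Main obstacle.} The only real subtlety is confirming the nonsingularity of the $(q-1)\times(q-1)$ coefficient matrix $\big(\omega^{-i^{-1}k}\big)_{0\le k\le q-2,\,1\le i\le q-1}$. This is a Vandermonde matrix in the $q-1$ values $\zeta_i:=\omega^{-i^{-1}}$, which are distinct precisely when $\omega$ has order $q$ (they are then a permutation of the nontrivial $q$th roots of unity); so the determinant is $\prod_{i<j}(\zeta_j-\zeta_i)\neq 0$. I would make sure the running hypothesis is stated so that $\omega$ is genuinely a primitive $q$th root of unity (the trivial $\omega$ case being vacuous or already absorbed into $S^1$). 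Everything else is the bookkeeping with the two multiplication lemmas already in hand, so the proof is short once the Vandermonde point is nailed down.

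\begin{proof}
We may assume $\omega$ is a primitive $q$th root of unity; for $\omega=1$ the module is a summand of $S^1$ already treated. Let $\alpha=\sum_{i=1}^{q-1}c_i e_i^\omega$ be a nonzero element of a submodule $M\subseteq M_\omega$. By Lemma \ref{l6.2}, for each $k\in\{0,1,\dots,q-2\}$ we have
\begin{align*}
s^{C_q,C_q}_{\Delta(C_q,1),k}\alpha=\sum_{i=1}^{q-1}c_i\,\omega^{-i^{-1}k}e_i^\omega,
\end{align*}
where $i^{-1}$ denotes the inverse of $i$ modulo $q$. As $i$ runs over $1,\dots,q-1$, the residues $i^{-1}$ also run over $1,\dots,q-1$, so the scalars $\omega^{-i^{-1}}$ are the $q-1$ distinct nontrivial $q$th roots of unity. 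Hence the matrix $\big(\omega^{-i^{-1}k}\big)_{0\le k\le q-2,\,1\le i\le q-1}$ is a Vandermonde matrix in distinct values and is therefore invertible. Consequently each $c_i e_i^\omega$ lies in the span of $\{s^{C_q,C_q}_{\Delta(C_q,1),k}\alpha\}_{0\le k\le q-2}\subseteq M$. Since $\alpha\neq0$, some $c_i\neq0$, so $e_i^\omega\in M$ for that index $i$. Using $e_i^\omega=s^{C_q,C_q}_{\Delta(C_q,i),0}e_1^\omega$ together with $s^{C_q,C_q}_{\Delta(C_q,a),0}s^{C_q,C_q}_{\Delta(C_q,b),0}=s^{C_q,C_q}_{\Delta(C_q,ab),0}$, we get $s^{C_q,C_q}_{\Delta(C_q,i^{-1}),0}e_i^\omega=e_1^\omega$, so $e_1^\omega\in M$. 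Finally, for all $i$ and all $0\le r\le q-1$,
\begin{align*}
s^{C_q,C_q}_{\Delta(C_q,i),r}e_1^\omega=s^{C_q,C_q}_{\Delta(C_q,1),r}s^{C_q,C_q}_{\Delta(C_q,i),0}e_1^\omega=s^{C_q,C_q}_{\Delta(C_q,1),r}e_i^\omega=\omega^{-i^{-1}r}e_i^\omega,
\end{align*}
so $e_i^\omega\in M$ for every $i$, giving $M=M_\omega$. Therefore $M_\omega$ is simple.
\end{proof}
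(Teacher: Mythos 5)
Your proof is correct and rests on the same key idea as the paper's: the vectors $s^{C_q,C_q}_{\Delta(C_q,1),k}\alpha$ for $k=0,\dots,q-2$ have coefficient matrix of Vandermonde type in the distinct scalars $\omega^{-i^{-1}}$, hence span enough of $M_\omega$ to force the submodule to be everything. Your version is in fact slightly cleaner: by treating $(\omega^{-i^{-1}k})_{k,i}$ as the (always invertible) coefficient matrix of the system with unknowns $c_ie_i^\omega$, you solve for each $c_ie_i^\omega$ directly and avoid the paper's two-case argument, where the determinant of $\Gamma$ carries the factor $\prod_i c_i$ and so a separate reduction (via the permutation action of $s^{C_q,C_q}_{\Delta(C_q,i),0}$ and the sums $S_{\alpha_k}$) is needed when some $c_i$ vanishes. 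Two small remarks: you are right that $\omega\neq 1$ must be assumed (the statement is false for $\omega=1$, and the paper leaves this implicit), but your parenthetical that the $\omega=1$ module is "a summand of $S^1$" is inaccurate --- $M_1$ lives in $\mathbb{K}_\ell\Lambda^{=}_{C_q}$, not in $S^1$; it decomposes instead into the $q-1$ one-dimensional modules of Theorem \ref{t5.5}. This does not affect the validity of your argument for primitive $\omega$.
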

\begin{proof}
Let $M$ be a proper nonzero submodule of $M_\omega$, $\alpha\in M$ and write $\alpha =\sum_{1\leq i\leq q-1}c_ie_i^\omega$ where $c_i\in\mathbb{K}$. Suppose that $c_i\not=0$ for all $i$. Let $s^{C_q,C_q}_{\Delta(C_q,1),k}\alpha=\alpha_k$ and let us work with the notation $\alpha_k=\left[\omega^{-k}c_1,\omega^{-2^{-1}k}c_2,\cdots,\omega^{-(q-1)^{-1}k}c_{q-1}\right]$. Note that $\alpha_0=\alpha$. Let $\Gamma$ denote the $(q-1)\times (q-1)$ matrix where the $k$th row is given by $\alpha_{k-1}$. We have

\begin{align*}
\Gamma=\begin{bmatrix}
c_1 & c_2 & \cdots & c_{q-1}\\
\omega^{-1} c_1 & \omega^{-2^{-1}} c_2 & \cdots & \omega^{-(q-1)^{-1}}c_{q-1}\\
\omega^{-2}c_1 & (\omega^{{-2^{-1}}})^{2} c_2 & \cdots & (\omega^{-(q-1)^{-1}})^2c_{q-1}\\
\vdots &\vdots & &\vdots \\
\omega^{-(q-2)}c_1 & (\omega^{{-2^{-1}}})^{(q-2)} c_2 & \cdots & (\omega^{-(q-1)^{-1}})^{(q-2)}c_{q-1}
\end{bmatrix}.
\end{align*} 

Then
\begin{align*}
\mathrm{det}(\Gamma)=\prod_{i
=1}^{q-1}c_i\begin{vmatrix}
1 & 1& \cdots & 1\\
\omega^{-1}  & \omega^{-2^{-1}}  & \cdots & \omega^{-(q-1)^{-1}}\\
\omega^{-2} & (\omega^{{-2^{-1}}})^{2}  & \cdots & (\omega^{-(q-1)^{-1}})^2\\
\vdots &\vdots & &\vdots \\
\omega^{-(q-2)} & (\omega^{{-2^{-1}}})^{(q-2)}  & \cdots & (\omega^{-(q-1)^{-1}})^{(q-2)}
\end{vmatrix}.
\end{align*}

We see that the right hand side contains a Vandermonde determinant, hence $\mathrm{det}(\Gamma)$ is nonzero, which implies that $M$ contains $(q-1)$ linearly independent elements, which forces $M=S^{=}_{(\omega ,1)}$, contradiction.

Now let us drop the assumption of all $c_i$'s being nonzero. We have $s^{C_q,C_q}_{\Delta(C_q,i),0}e^\omega_j=e^\omega_{ij}$. Hence, multiplying with $s^{C_q,C_q}_{\Delta(C_q,i),0}$ induces a permutation on the set of basis elements. Letting $i_\alpha =s^{C_q,C_q}_{\Delta(C_q,i),0}\alpha$ we get
\begin{align*}
\sum_{1\leq i\leq q-1}i_\alpha=(\sum_{1\leq i\leq q-1}c_i)\sum_{1\leq i\leq q-1} e_i^\omega.
\end{align*}

If $\sum_{1\leq i\leq q-1}c_i\not=0$, then $\sum_{1\leq i\leq q-1} e_i^\omega\in M$ and the result follows from the previous case. So suppose $\sum_{1\leq i\leq q-1}c_i=0$ and let us use $S_{\alpha_k}=\sum_{1\leq i\leq q-1}\omega^{-i^{-1}k}c_i$ for the sum of coefficients in $\alpha_k$. If $S_{\alpha_k}\not=0$ for some $k$, we are done by the previous case. If not, we get the system of equations $\Gamma\cdot(1,1,\cdots,1)^{\mathsf{T}}=0$ which has a unique solution as the coefficients of $c_i$'s form a Vandermonde matrix, but the solution is $c_i=0$ for all $i$, which contradicts $\alpha$ being nonzero.
\end{proof}

\section{Evaluations of the simple $\mathbb{K}_\ell\Lambda^A_{C_q}$-functors}\label{s6}
In this section, we will find the simple modules of the semisimple deformations of a cyclic group of prime order. We shall begin by providing the functorial foundation behind our calculations.
For an abelian group $A$ and a collection $\mathcal{K}$ of finite groups that is closed up to taking subquotients, let $\mathcal{F}$ be a simple $\mathbb{K}_\ell\Lambda^A_\mathcal{K}$-module. It can also be seen as a simple functor to the category of $\mathbb{K}$-modules and its evaluation at $X\in\mathcal{K}$ is given by $\mathcal{F}(X)=s^{X,X}_{\Delta(X,1),0}\mathcal{F}$. Now let $G$ be a minimal element of $\mathcal{K}$ with respect to providing nonzero evaluation, i.e., $\mathcal{F}(G)\not=0$. If $V\leq G\times G$ with $q(V)< G$, then $s^{G,G}_{V,\varphi}\mathcal{F}(G)=0$ as otherwise, for any $E\in \mathcal{K}$ with $E\cong q(V)$ we get $0\not= s^{E,G}_{\Delta(E,1,\theta,k_2(V),p_2(V))}\mathcal{F}$ hence $\mathcal{F}(E)\not=0$, contradicting the minimality of $G$. 

Notice that $s^{G,G}_{\Delta(p_2(U),k_2(U),\theta,k_2(U),p_2(U))}\mathcal{F}\not=0$ if and only if $s^{E,G}_{\Delta(E,1,\theta,k_2(U),p_2(U))}\mathcal{F}\not=0$, so given $\left[N\right]<\left[G\right]$, the minimality of $G$ requires $x\mathcal{F}=0$ for any $x\in S^N_G$. Thus the minimality of $G$ for the case when $\mathcal{K}$ is not necessarily closed under taking subquotients can be stated as being a minimal element from the set of subquotients of elements of $\mathcal{K}$ with respect to satisfying $x\mathcal{F}\not=0$ for some $x\in S^G_H$ where $\left[G\right]\leq\left[H\right]$ and $H\in\mathcal{K}$. So $\mathcal{F}(G)$ is annihilated by $\mathbb{K}_\ell\Lambda^A_{<G}$ and is simple as a $\mathbb{K}_\ell\Lambda^{=}_G$-module. Hence the structure of the evaluation $\mathcal{F}(G)$ is determined by the restriction of $\mathcal{F}$ onto $\mathbb{K}_\ell\Lambda^A_G$.

Let $q$ be a prime and $C_q$ the cyclic group of order $q$. We will provide the explicit descriptions for the evaluations of the simple $\mathbb{K}_\ell\Lambda^A_{C_q}$-functors with minimal group $C_q$. Our methods rely on direct calculation by using the conditions on $\mathcal{F}(G)$ mentioned above. Throughout this section let $\lambda=\ell(q),\, \rho=|(C_q)^\ast|$ and $a$ denote a generator for the $q$-torsion part of $A$. Although the next lemma can be obtained from Corollary \ref{c4.5} we believe a more direct approach is less complicated.

\begin{lemm}\label{l2.1}
Let $\beta=c_1 s^{C_q\times C_q}_{1\times 1}+ c_2S_{1\times C_q}+c_3S_{C_q\times 1}+c_4S_{C_q\times C_q}$ where $c_i\in \mathbb{K}$ and set \begin{align*} \tau(V)=\begin{cases}
   \rho & \text{if } k_2(V)=1 \\
   \lambda & \text{if } k_2(V)=C_q  
  \end{cases}.
  \end{align*} If $V\leq C_q\times C_q$ with $q(V)=1$ then
\begin{align*}
s^{C_q,C_q}_{V,\varphi_1\times \varphi_2}\beta&=(c_1+c_3\tau(V))s^{C_q,C_q}_{k_1(V)\times 1,\varphi_1}+(c_2+c_4\tau(V))(\varphi_1,S_{k_1(V)\times C_q})\\
s^{C_q,C_q}_{\Delta(C_q,\zeta),\varphi}\beta&=c_1 s^{C_q\times C_q}_{1\times 1}+ c_2S_{1\times C_q}+c_3S_{C_q\times 1}+c_4S_{C_q\times C_q}.
\end{align*}
\end{lemm}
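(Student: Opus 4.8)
The plan is to prove Lemma \ref{l2.1} by a direct computation of the two products, using the multiplication rule of $\mathbb{K}_\ell\Lambda^A_{C_q}$ stated in Section \ref{s2}, together with Theorem \ref{tmackey} (the Mackey-type formula) and the observation that for $C_q$ the only subgroups are $1$ and $C_q$, so the relevant subgroups of $C_q\times C_q$ are extremely limited. Since $\beta$ is a $\mathbb{K}$-linear combination of the four elements $s^{C_q\times C_q}_{1\times 1}$, $S_{1\times C_q}$, $S_{C_q\times 1}$, $S_{C_q\times C_q}$ (each itself a sum of basis subcharacters), it suffices by linearity to compute $s^{C_q,C_q}_{V,\varphi_1\times\varphi_2}$ against each of these four summands separately, for $V$ with $q(V)=1$ (so $V=k_1(V)\times k_2(V)$ is a direct product), and likewise against each summand for $V=\Delta(C_q,\zeta)$ a twisted diagonal.

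First I would handle the direct-product case. Write $V=k_1(V)\times k_2(V)$, so $p_1(V)=k_1(V)$, $p_2(V)=k_2(V)$. For a product $s^{C_q,C_q}_{V,\varphi_1\times\varphi_2}\cdot s^{C_q,C_q}_{W,\psi}$ with $W$ another direct-product or diagonal subgroup, the star product $V\ast W$ depends only on $k_2(V)\cap p_1(W)$, and since $C_q$ has no proper nontrivial subgroups the intersection $k_2(V)\cap k_1(W)$ (which governs the coefficient $\ell(-)$) is either $1$ or $C_q$ — this is exactly where $\tau(V)$ enters: when $k_2(V)=1$ the coefficient $\ell(k_2(V)\cap k_1(W))=\ell(1)=1$, but summing over the $\rho=|(C_q)^\ast|$ characters $\psi\in(C_q)^\ast$ appearing in $S_{C_q\times 1}$ or $S_{C_q\times C_q}$ produces the factor $\rho$; when $k_2(V)=C_q$ the relatedness condition $(V,\varphi_1\times\varphi_2)\sim(W,\psi)$ forces $\psi=\varphi_2^{-1}$, killing all but one term, and the coefficient becomes $\ell(C_q)=\lambda$. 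I would carefully track: (i) which summand of $\beta$ contributes (the $c_1 s^{C_q\times C_q}_{1\times 1}$ and $c_3 S_{C_q\times 1}$ terms feed into $s^{C_q,C_q}_{k_1(V)\times 1,\varphi_1}$, while $c_2 S_{1\times C_q}$ and $c_4 S_{C_q\times C_q}$ feed into $(\varphi_1,S_{k_1(V)\times C_q})$, because the right-hand $G$-component of $V\ast W$ inherits $p_2(W)$), and (ii) that the left-hand character of the result is $\varphi_1$ on $k_1(V)$, as follows from the definition $(\varphi_1\times\varphi_2)\ast\psi$ evaluated on $u_1\times v_2$ with $u_1\in k_1(V)$ and $u_1\times 1\in V$ (so one may take $g=1$). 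This yields the claimed coefficients $c_1+c_3\tau(V)$ and $c_2+c_4\tau(V)$.

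The diagonal case $V=\Delta(C_q,\zeta)$ is where $q(V)=C_q$ and $k_1(V)=k_2(V)=1$, so $V$ is twisted diagonal and, as noted in Section 5, $\ell$ has no effect: every coefficient $\ell(1)=1$. Here the star product $\Delta(C_q,\zeta)\ast W$ with $W$ a direct product $k_1(W)\times k_2(W)$ has $p_1 = k_1(W)$ (via Lemma \ref{l2.3} or direct inspection, since $p_2(\Delta(C_q,\zeta))\cap k_1(W)=k_1(W)$ maps isomorphically), $k_1 = k_1(W)$, so $q$ of the result is trivial and we land back among the four basis-type elements; summing over characters and using that the diagonal is an (essentially) invertible/permutation-type element shows the whole $\beta$ is reproduced unchanged, i.e. $s^{C_q,C_q}_{\Delta(C_q,\zeta),\varphi}\beta=\beta$ (written out coordinatewise as in the statement). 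I expect the main obstacle to be purely bookkeeping: keeping the relatedness condition $\sim$, the choice of intermediate element $g$ in the formula $(\varphi_1\ast\psi)(u_1\times v_2)=\varphi_1(u_1\times g)\psi(g\times v_2)$, and the count of surviving summands all consistent across the four cases — there is no conceptual difficulty once one notes that $C_q$ being of prime order collapses all intersections to $1$ or $C_q$, so I would organize the proof as a short case table rather than prose.
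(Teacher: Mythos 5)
Your proposal is correct and follows essentially the same route as the paper: a direct computation of the star products, with the factor $\tau(V)$ arising exactly as you describe (the sum over all $\rho$ characters when $k_2(V)=1$ versus the single surviving related character with coefficient $\ell(C_q)=\lambda$ when $k_2(V)=C_q$). In fact you spell out slightly more than the paper's proof, which only records the two key product formulas and leaves the rest, including the twisted-diagonal case, as "direct calculation."
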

\begin{proof}
The lemma can be obtained by direct calculation but we will provide some insight. Let $E\leq G$. Assume $V$ is a direct product with $k_2(V)=1$, then for every $\psi_1\in (C_q)^\ast$ we have 
\begin{align*}
s^{C_q,C_q}_{V,\varphi_1}s^{C_q,C_q}_{C_q\times E,\psi_1\times  \psi_2}=s^{C_q,C_q}_{k_1(V)\times E,\varphi_1\times  \psi_2}
\end{align*}  and when $k_2(V)=C_q$ we get 
\begin{align*}
s^{C_q,C_q}_{V,\varphi_1\times\varphi_2}s^{C_q,C_q}_{C_q\times E,\psi_1\times  \psi_2}=\begin{cases}
   \lambda s^{C_q,C_q}_{k_1(V)\times E,\varphi_1\times  \psi_2} & \text{if } \varphi_2=\psi^{-1} \\
    0 & \text{otherwise }  
  \end{cases}.
\end{align*}

\end{proof}
Assuming the notation from previous section, for $1\leq i\leq q-1$, let $\mathcal{F}_{C_q,\chi_i}$ denote the simple $\mathbb{K}_\ell\Lambda^A_{C_q}$-functor with minimal group $C_q$ whose evaluation $\mathcal{F}_{C_q,\chi_i}(C_q)$ corresponds to the module whose generator is given by $\theta_i=\sum_{h\in H}\chi_i(h^{-1})h$.

\begin{lemm}
Let 
$
\alpha=s^{C_q,C_q}_{1\times 1}-\dfrac{1}{\lambda}(S_{1\times C_q}+S_{C_q\times 1})+\dfrac{1}{\rho\lambda}S_{C_q\times C_q}+\dfrac{1}{q-1}(\dfrac{1}{\lambda}-\dfrac{1}{\rho})\displaystyle{\sum_{1\leq i\leq q-1}e^1_i}.
$
Then $\mathcal{F}_{C_q,\chi_i}(C_q)=\langle \alpha \rangle$.
\end{lemm}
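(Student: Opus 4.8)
The plan is to verify directly that the element $\alpha$ is annihilated by $\mathbb{K}_\ell\Lambda^A_{<C_q}$ and that it generates a simple $\mathbb{K}_\ell\Lambda^=_{C_q}$-module isomorphic to the one attached to $\chi_i$; by the discussion preceding this lemma (that $\mathcal{F}(C_q)$ is the unique simple quotient annihilated by $\mathbb{K}_\ell\Lambda^A_{<C_q}$ and cut out by $s^{C_q,C_q}_{\Delta(C_q,1),0}$), this pins down $\mathcal{F}_{C_q,\chi_i}(C_q)$ up to scalar. First I would split $\alpha = \beta + \eta$, where $\beta = s^{C_q,C_q}_{1\times 1}-\tfrac{1}{\lambda}(S_{1\times C_q}+S_{C_q\times 1})+\tfrac{1}{\rho\lambda}S_{C_q\times C_q}$ lies in $S^1$ (the span of direct-product subcharacters), and $\eta = \tfrac{1}{q-1}(\tfrac1\lambda-\tfrac1\rho)\sum_{1\le i\le q-1}e_i^1$ lies in $\mathbb{K}_\ell\Lambda^=_{C_q}$. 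The only subgroups $V\le C_q\times C_q$ with $q(V)<C_q$ are the direct products, and these act on $\eta$ trivially in the sense of Lemma~\ref{l2.1} (second displayed formula: $s^{C_q,C_q}_{\Delta(C_q,\zeta),\varphi}\beta$ returns $\beta$, but we need the complementary statement — a direct product acting on the twisted-diagonal part), so the main computation is to apply the first formula of Lemma~\ref{l2.1} to $\beta$ with $\tau(V)=\rho$ or $\lambda$ according to $k_2(V)$, together with the known action of direct products on the $e_i^1$.

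Concretely, for $V = 1\times 1$ one reads off $s^{C_q,C_q}_{1\times 1}\beta = (c_1+c_3\tau)s_{1\times1} + (c_2+c_4\tau)S_{1\times C_q}$ with $\tau=\rho$; plugging $c_1=1,\ c_2=c_3=-\tfrac1\lambda,\ c_4=\tfrac1{\rho\lambda}$ gives coefficients $1-\tfrac\rho\lambda$ and $-\tfrac1\lambda+\tfrac\rho{\rho\lambda}=0$ for the $S_{1\times C_q}$ part, leaving $s^{C_q,C_q}_{1\times1}\beta = (1-\tfrac\rho\lambda)s_{1\times1}$. Meanwhile the identity $s^{C_q,C_q}_{\Delta(C_q,i),0}e^1_j=e^1_{ij}$ (established in Section~5) shows $s^{C_q,C_q}_{1\times1}$ — viewed via the relevant direct-product action — sends $\sum_j e_j^1$ to a scalar multiple of the trivial subcharacter $s_{1\times1}$; the scalar is designed so that $s^{C_q,C_q}_{1\times1}\alpha = 0$, which forces the coefficient $\tfrac1{q-1}(\tfrac1\lambda-\tfrac1\rho)$ exactly. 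One then runs the same check for $V = C_q\times 1$ (where $\tau=\lambda$) and $V = 1\times C_q$ and $V = C_q\times C_q$, in each case getting $s^{C_q,C_q}_{V,\varphi}\alpha = 0$; this is the bulk of the work and consists of bookkeeping the Vandermonde-type sums $\sum_j e_j^1$ against Lemma~\ref{l2.1}. Having shown $\mathbb{K}_\ell\Lambda^A_{<C_q}\cdot\alpha = 0$, it remains to compute $s^{C_q,C_q}_{\Delta(C_q,1),0}\,\alpha$ and $s^{C_q,C_q}_{\Delta(C_q,z),0}\,\alpha$ and match them against the characters $f,g$ of $H=C_q\rtimes C_{q-1}$: by Lemma~\ref{l4.6}'s sibling Lemma~\ref{l2.1}, a twisted diagonal fixes the $S^1$-part $\beta$, so the twisted-diagonal action on $\alpha$ reduces to its action on $\eta$, which is exactly the action of $H$ on $\sum_j\chi_i(h^{-1})h$ up to the fixed scalar; comparing with the character table of $H$ identifies this with $\theta_i$.

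The main obstacle I anticipate is the fourth computation, $s^{C_q,C_q}_{C_q\times C_q,\varphi_1\times\varphi_2}\alpha = 0$: here Lemma~\ref{l2.1} contributes $(c_1+c_3\lambda)s_{C_q\times1,\varphi_1}+(c_2+c_4\lambda)(\varphi_1,S_{C_q\times C_q})$, which in our normalization is $(1-1)s_{C_q\times1,\varphi_1}+(-\tfrac1\lambda+\tfrac1\rho)(\varphi_1,S_{C_q\times C_q})$ — a genuinely nonzero contribution from $\beta$ — that must cancel against the contribution of $\eta$, i.e., against $\tfrac1{q-1}(\tfrac1\lambda-\tfrac1\rho)\sum_j s^{C_q,C_q}_{C_q\times C_q,\varphi_1\times\varphi_2}e_j^1$. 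This requires knowing the precise coefficient with which each $(\varphi_1,S_{C_q\times C_q})$ appears after hitting $e_j^1$ with a full direct-product subcharacter carrying a character $\varphi_2$ — essentially a discrete Fourier identity showing that summing $e_j^1$ over all $j$ (equivalently, over all twisted diagonals through a fixed generator) and then projecting to the direct product $C_q\times C_q$ produces exactly $\rho^{-1}$-weighted copies of each $(\varphi_1,S_{C_q\times C_q})$, independent of $\varphi_1$. Making this step clean — rather than a brute sum over $q-1$ terms — is where I would spend the most care; everything else is substitution into Lemma~\ref{l2.1} and the Section~5 multiplication rules.
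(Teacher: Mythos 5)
Your proposal is correct and follows essentially the same route as the paper: decompose $\alpha$ into its direct-product part $\beta$ and twisted-diagonal part $\eta$, apply Lemma \ref{l2.1} to $\beta$, compute the action of the direct-product subcharacters on $\eta$ via the Section 5 multiplication rules, and verify the cancellation in each of the cases $k_2(V)=1$ and $k_2(V)=C_q$ (your ``discrete Fourier'' observation for $V=C_q\times C_q$ is exactly the third term $\frac{\tau(V)}{\ell(k_2(V))}(\frac1\lambda-\frac1\rho)(\varphi_1,S_V)$ appearing in the paper's display). The paper packages the case analysis by $k_2(V)$ rather than by listing the four subgroups, but the content is identical.
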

\begin{proof}
From Lemma \ref{l2.1} it is clear that for any $1\leq j\leq q-1,\,\varphi\in(\Delta(C_q,j))^\ast$ the action of $s^{C_q,C_q}_{\Delta(C_q,j),\varphi}$  on $\mathcal{F}_{C_q,\chi_i}(C_q)$ is trivial. Direct calculation and applying Lemma \ref{l2.1} yields
\begin{align*}
s^{C_q,C_q}_{V,\varphi_1\times \varphi_2}&=(1-\dfrac{1}{\lambda}\tau(V))s^{C_q,C_q}_{k_1(V)\times 1,\varphi_1}+(-\dfrac{1}{\lambda}+\dfrac{1}{\rho\lambda}\tau(V))(\varphi_1,S_{k_1(V)\times C_q})\\
&+\dfrac{\tau(V)}{\ell(k_2(V))}(\dfrac{1}{\lambda}-\dfrac{1}{\rho})(\varphi_1,S_V)
.
\end{align*}
When $k_2(V)=1$ we get
\begin{align*}
s^{C_q,C_q}_{V,\varphi_1\times \varphi_2}=(1-\dfrac{\rho}{\lambda}+\rho(\dfrac{1}{\lambda}-\dfrac{1}{\rho}))s^{C_q,C_q}_{k_1(V)\times 1,\varphi_1}+(-\dfrac{1}{\lambda}+\dfrac{\rho}{\rho\lambda})(\varphi_1,S_{k_1(V)\times C_q})=0
\end{align*}
and letting $k_2(V)=C_q$ yields
\begin{align*}
s^{C_q,C_q}_{V,\varphi_1\times \varphi_2}=(1-\dfrac{\lambda}{\lambda})s^{C_q,C_q}_{k_1(V)\times 1,\varphi_1}+(-\dfrac{1}{\lambda}+\dfrac{\lambda}{\rho\lambda}+\dfrac{\lambda}{\lambda}(\dfrac{1}{\lambda}-\dfrac{1}{\rho}))(\varphi_1,S_{k_1(V)\times C_q})=0.
\end{align*}
\end{proof}
As the next lemma suggests, when $1<i\leq q-1$, the structure of $\mathcal{F}_{C_q,\chi_i}$ is not as complicated.
\begin{lemm}\label{l6.3}
For $1<i\leq q-1$, we have $\mathcal{F}_{C_q,\chi_i}(C_q)=\langle\theta_i\rangle$.
\end{lemm}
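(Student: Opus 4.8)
The plan is to prove that $\langle\theta_i\rangle$ — the left submodule of the regular $\mathbb{K}_\ell\Lambda^A_{C_q}$-module generated by $\theta_i$ — is exactly the line $\mathbb{K}\theta_i$. Since $\mathcal{K}=\{C_q\}$, the element $s^{C_q,C_q}_{\Delta(C_q,1),0}$ is the identity of $\mathbb{K}_\ell\Lambda^A_{C_q}$, so $\mathcal{F}_{C_q,\chi_i}(C_q)$ is $\mathcal{F}_{C_q,\chi_i}$ itself; and once we know $\langle\theta_i\rangle=\mathbb{K}\theta_i$, this line is one-dimensional (hence simple), is killed by $\mathbb{K}_\ell\Lambda^A_{<C_q}$ (hence has minimal group $C_q$), and restricts to the character $\chi_i$ on $\mathbb{K}_\ell\Lambda^{=}_{C_q}$, so by the description of simple functors in Section \ref{s6} it must be $\mathcal{F}_{C_q,\chi_i}(C_q)$. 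To get $\langle\theta_i\rangle=\mathbb{K}\theta_i$ I would first reduce to direct products: as a $\mathbb{K}$-space $\mathbb{K}_\ell\Lambda^A_{C_q}$ is spanned by the twisted-diagonal basis vectors $s^{C_q,C_q}_{\Delta(C_q,j),l}$ ($j\in(\mathbb{Z}/q\mathbb{Z})^{\times}$, $l\in\mathbb{Z}/q\mathbb{Z}$), which span the subalgebra $\mathbb{K}_\ell\Lambda^{=}_{C_q}\cong\mathbb{K}[H]$ on which $\theta_i$ spans the one-dimensional $\chi_i$-module (so $\mathbb{K}_\ell\Lambda^{=}_{C_q}\theta_i=\mathbb{K}\theta_i$), together with the vectors $s^{C_q,C_q}_{V,\varphi}$ with $q(V)=1$; by Proposition \ref{p2.2} the latter already $\mathbb{K}$-span the ideal $\mathbb{K}_\ell\Lambda^A_{<C_q}$, and the subgroups $V\le C_q\times C_q$ with $q(V)=1$ are exactly the four direct products $1\times1$, $1\times C_q$, $C_q\times1$, $C_q\times C_q$. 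Hence it suffices to prove $s^{C_q,C_q}_{V,\varphi}\theta_i=0$ for each of these four $V$, each $\varphi\in V^\ast$, and each $i$ with $1<i\le q-1$.

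For this I would rewrite $\theta_i$ in a form adapted to that basis. Writing $T_j=\sum_{l\in\mathbb{Z}/q\mathbb{Z}}s^{C_q,C_q}_{\Delta(C_q,j),l}$, the fact that $\chi_i$ is linear with $\chi_i(f)=1$, $\chi_i(g)=\mu^{i-1}$, combined with $g^mT_1=T_{z^m}$ (which follows from $s^{C_q,C_q}_{\Delta(C_q,i),0}s^{C_q,C_q}_{\Delta(C_q,1),r}=s^{C_q,C_q}_{\Delta(C_q,i),ir}$), yields
\[
\theta_i=\Big(\sum_{m=0}^{q-2}\mu^{-(i-1)m}g^m\Big)T_1=\sum_{m=0}^{q-2}\mu^{-(i-1)m}\,T_{z^m}.
\]
The crux is then the claim that, for a direct product $V$ and $\varphi\in V^\ast$, the product $s^{C_q,C_q}_{V,\varphi}T_j$ is independent of $j\in(\mathbb{Z}/q\mathbb{Z})^{\times}$; calling this common value $\beta_V$, we conclude
\[
s^{C_q,C_q}_{V,\varphi}\theta_i=\Big(\sum_{m=0}^{q-2}\mu^{-(i-1)m}\Big)\beta_V=0\qquad(1<i\le q-1),
\]
since for such $i$ the scalar $\mu^{-(i-1)}$ is a nontrivial $(q-1)$th root of unity and the geometric sum vanishes. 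Together with the first paragraph this gives $\mathbb{K}_\ell\Lambda^A_{<C_q}\theta_i=0$ and $\langle\theta_i\rangle=\mathbb{K}\theta_i$, as wanted.

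The only genuinely technical point — and the main obstacle — is the $j$-independence of $s^{C_q,C_q}_{V,\varphi}T_j$. This is a short case analysis with the $\ast$-product: in each of the four cases $V\ast\Delta(C_q,j)=V$ and $k_2(V)\cap k_1(\Delta(C_q,j))=1$ (so no $\ell$-factor occurs and the relation $\sim$ is automatic), and the twist $j$ enters only as a relabelling of the second-component character of the resulting subcharacter, which washes out once $l$ is summed over all of $\mathbb{Z}/q\mathbb{Z}$ in $T_j$; one finds $\beta_V$ equal to $q\,s^{C_q,C_q}_{1\times1}$, $\ q\,s^{C_q,C_q}_{C_q\times1,\varphi_1}$, $\ S_{1\times C_q}$, $\ (\varphi_1,S_{C_q\times C_q})$ in the four cases. (This computation could alternatively be deduced from Corollary \ref{c4.5}.) Finally, it is worth noticing that the vanishing breaks down precisely at $i=1$, where the geometric sum equals $q-1\ne0$ — which is exactly why in the preceding lemma the generator of $\mathcal{F}_{C_q,\chi_1}(C_q)$ must be corrected from $\theta_1$ by the additional direct-product terms appearing in $\alpha$.
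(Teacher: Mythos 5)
Your proposal is correct and follows essentially the same route as the paper: both arguments reduce to showing that each direct-product basis element $s^{C_q,C_q}_{V,\varphi}$ kills $\theta_i$, and both do so by observing that the product collapses to a $j$-independent (respectively, conjugacy-class-independent) quantity multiplied by the geometric sum $\sum_m \mu^{(i-1)m}$, which vanishes precisely for $1<i\le q-1$. Your reorganization via the elements $T_j$ and the explicit identification of the four values $\beta_V$ is just a cleaner bookkeeping of the same computation.
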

\begin{proof}
For a better notation, we set $s^{C_q,C_q}_{E\times E',\varphi\times \psi}=s^{C_q,C_q}_{E\times E',(m,n)}$ where $\varphi(x)=a^m,\psi(x)=a^n$ for $E,E'\leq C_q$ and $x$ denotes a generator of $C_q$. Also let $z$ denote a primitive root from $(\mathbb{Z}/q\mathbb{Z})^\ast$ Then
\begin{align*}
s^{C_q,C_q}_{E\times 1,m}\theta_i &=\sum_{1\leq i\leq q-1}\mu^{i-1}\rho s^{C_q,C_q}_{E\times 1,m}=0\\
s^{C_q,C_q}_{E\times C_q,(m,n)}\theta_i&=\sum_{1\leq i\leq q-1}\mu^{i-1}\sum_{0\leq r\leq \rho-1}s^{C_q,C_q}_{E\times C_q, (m,(n+r)(z^{i-1})^{-1})}\\
&=(\sum_{1\leq i\leq q-1}\mu^{i-1})(m,S_{E\times C_q})=0.
\end{align*}
The third equality follows from the fact that $s^{C_q,C_q}_{E\times C_q,(m,n)}s^{C_q,C_q}_{\Delta(G,z^{i-1}),r}=s^{C_q,C_q}_{E\times C_q, (m,(n+r)(z^{i-1})^{-1})}$ because letting $\varphi$ denote the character of the resulting multiplication, we get $$\varphi(1\times x)=n(1\times x^{(z^{i-1})^{-1}})r(x^{(z^{i-1})^{-1}} \times x)=a^{n(z^{i-1})^{-1}}a^{r(z^{i-1})^{-1}}=a^{(n+r)(z^{i-1})^{-1}}.$$
\end{proof}
Note that if the  $q$-torsion part of $A$ is trivial, then $H=((C_q)^\ast\rtimes C_{q-1})\cong C_{q-1}$, meaning there are no other simple $\mathbb{K}_\ell\Lambda^A_{C_q}$-functors for which $C_q$ provides nonzero minimal evaluation. So the following results do require $A$ to satisfy \cite[Hypothesis 10.1]{BC18} non-trivially.
Let $\mathcal{F}_{C_q,M_\omega}$ be the simple $\mathbb{K}_\ell\Lambda^A_{C_q}$-functor whose evaluation at $C_q$ corresponds to the simple $\mathbb{K}_\ell\Lambda^=_{C_q}$-module $M_\omega$.
\begin{lemm}
We have
$\mathcal{F}_{C_q,M_\omega}(C_q)=\langle e_i^{\omega}-\dfrac{1}{\lambda}\sum_{0\leq r,s\leq q-1}s^{C_q,C_q}_{C_q\times C_q,(r,s)}\omega^{s+ri^{-1}}\rangle_{1\leq i\leq q-1}.$
\end{lemm}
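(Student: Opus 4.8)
The goal is to pin down the evaluation $\mathcal{F}_{C_q,M_\omega}(C_q)$, i.e.\ to exhibit a $(q-1)$-dimensional space inside $\mathbb{K}_\ell\Lambda^A_{C_q}$ that (a) restricts on $\mathbb{K}_\ell\Lambda^=_{C_q}$ to a copy of $M_\omega$, and (b) is annihilated by $\mathbb{K}_\ell\Lambda^A_{<C_q}$, the ideal spanned by the basis elements $s^{C_q,C_q}_{V,\varphi}$ with $q(V)=1$. The candidate generators are
\[
\eta_i = e_i^\omega-\frac{1}{\lambda}\sum_{0\le r,s\le q-1}s^{C_q,C_q}_{C_q\times C_q,(r,s)}\,\omega^{\,s+ri^{-1}},\qquad 1\le i\le q-1.
\]
First I would rewrite the correction term using the notation $S_V$, $(\varphi,S_V)$ of Section 4: the double sum $\sum_{r,s}s^{C_q,C_q}_{C_q\times C_q,(r,s)}\omega^{s+ri^{-1}}$ is $\sum_{r}\omega^{ri^{-1}}(\varphi_r,S_{C_q\times C_q})$ where $\varphi_r(x)=a^r$, and in fact equals $\sum_{\psi}\omega^{n_\psi i^{-1}}S^{\psi}$ for an appropriate family; identifying this cleanly with something Lemma~\ref{l2.1} can act on is the bookkeeping heart of the argument.

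**Key steps.** (1) \emph{Action of the diagonal part.} Using the multiplication rules $s^{C_q,C_q}_{\Delta(C_q,i),0}e_j^\omega=e_{ij}^\omega$ and Lemma~\ref{l6.2} ($s^{C_q,C_q}_{\Delta(C_q,1),k}e_i^\omega=\omega^{-i^{-1}k}e_i^\omega$), together with the easy fact that a twisted-diagonal subcharacter $s^{C_q,C_q}_{\Delta(C_q,\zeta),\varphi}$ fixes each $S_{C_q\times C_q}$-type element (second line of Lemma~\ref{l2.1}), I would check that the span of $\{\eta_i\}$ is stable under $\mathbb{K}_\ell\Lambda^=_{C_q}$ and that the resulting action on the basis $\{\eta_i\}$ is \emph{exactly} the action of $\mathbb{K}_\ell\Lambda^=_{C_q}$ on $M_\omega$ described in Section~5 — because the correction term transforms under $s^{C_q,C_q}_{\Delta(C_q,i),0}$ by the same index permutation $j\mapsto ij$ and under $s^{C_q,C_q}_{\Delta(C_q,1),k}$ by the same scalar $\omega^{-i^{-1}k}$ (this is why the exponent $s+ri^{-1}$ is the right one). (2) \emph{Annihilation by $\mathbb{K}_\ell\Lambda^A_{<C_q}$.} For $V\le C_q\times C_q$ with $q(V)=1$ I would apply the first line of Lemma~\ref{l2.1} to $e_i^\omega$ and to the correction term separately, writing $\beta=c_1s_{1\times1}+c_2S_{1\times C_q}+c_3S_{C_q\times1}+c_4S_{C_q\times C_q}$ in each case, and verify that the coefficients of $s^{C_q,C_q}_{k_1(V)\times1,\varphi_1}$ and of $(\varphi_1,S_{k_1(V)\times C_q})$ cancel. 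The two cases $k_2(V)=1$ (so $\tau(V)=\rho$) and $k_2(V)=C_q$ (so $\tau(V)=\lambda$) must both be handled, and the $\frac1\lambda$ in front of the correction term is precisely what forces the $k_2(V)=C_q$ cancellation, exactly as in the computation in the proof of the $\mathcal{F}_{C_q,\chi_i}$ lemma. (3) \emph{Minimality/simplicity.} Once (1) and (2) hold, the span of $\{\eta_i\}$ is a $\mathbb{K}_\ell\Lambda^A_{C_q}$-submodule on which $\mathbb{K}_\ell\Lambda^A_{<C_q}$ acts as zero and which is simple over $\mathbb{K}_\ell\Lambda^=_{C_q}\cong M_\omega$; by the discussion at the start of Section~\ref{s6} this is exactly $\mathcal{F}_{C_q,M_\omega}(C_q)$, and linear independence of the $\eta_i$ follows since the $e_i^\omega$ are already linearly independent and involve twisted-diagonal summands absent from the correction term.

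**Main obstacle.** The delicate point is step (2) in the case $k_2(V)=C_q$: one has to track how $S_{C_q\times C_q}$-type terms are produced both from $s^{C_q,C_q}_{V,\varphi_1\times\varphi_2}\cdot e_i^\omega$ (since $e_i^\omega$ contains twisted-diagonal summands $s^{C_q,C_q}_{\Delta(C_q,i),j}$, and multiplying a $q(V)=1$ element by a twisted-diagonal one can land in $C_q\times C_q$) and from $s^{C_q,C_q}_{V,\varphi_1\times\varphi_2}$ acting on the correction term, and then see that the $\omega$-exponents match so the pieces cancel after the $\frac1\lambda$ weighting. I would do this by fixing generators $x$ of $C_q$, writing every character as $a^{(\cdot)}$ as in the proof of Lemma~\ref{l6.3}, expanding $s^{C_q,C_q}_{\Delta(C_q,i),j}$ against $S_{C_q\times1,\varphi_1}$ and $S_{1\times C_q,\varphi_2^{-1}}$ explicitly, and reindexing the resulting geometric sums over $C_q$; the identity $\sum_{r}\omega^r\cdot(\text{shift})=\omega^{(\cdot)}\sum_r\omega^r$ is what makes it collapse. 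Everything else is routine once the indexing conventions from Sections~5 and~6 are fixed.
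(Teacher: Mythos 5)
Your proposal is correct and follows essentially the same route as the paper: verify that each $\alpha_i^\omega$ is killed by the basis elements $s^{C_q,C_q}_{V,\varphi}$ with $q(V)=1$ (the $E\times 1$ terms vanish via $\sum_r\omega^r=0$, and for $E\times C_q$ the $\ell(C_q)=\lambda$ produced by the relatedness condition against $k_1(C_q\times C_q)$ cancels the $\tfrac1\lambda$ prefactor), and then check that the twisted-diagonal elements permute and scale the $\alpha_i^\omega$ exactly as in $M_\omega$. One small caveat: Lemma 6.1 does not literally apply in your step (2), since neither $e_i^\omega$ nor the $\omega$-weighted sum over $C_q\times C_q$ is of the form $c_1s_{1\times1}+c_2S_{1\times C_q}+c_3S_{C_q\times1}+c_4S_{C_q\times C_q}$; the explicit expansion and reindexing you describe in your final paragraph is what is actually needed, and is what the paper does.
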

\begin{proof}
Let $\alpha_i^\omega=e_i^{\omega}-\dfrac{1}{\lambda}\displaystyle{\sum_{0\leq r,s\leq q-1}s^{C_q,C_q}_{C_q\times C_q,(r,s)}\omega^{s+ri^{-1}}}$. Then
\begin{align*}
s^{C_q,C_q}_{E\times 1,m}\alpha_i^\omega &=s^{C_q,C_q}_{E\times 1,m}\sum_{0\leq r\leq q-1}s_{\Delta(G,i),ri}\omega^r-\dfrac{1}{\lambda}\sum_{0\leq r,s\leq q-1}s^{C_q,C_q}_{E\times C_q,(m,s)}\omega^{s+ri^{-1}}\\
&=s^{C_q,C_q}_{E\times 1,m}\left[ \sum_{0\leq r\leq q-1}\omega^r\right]-\dfrac{1}{\lambda}\sum_{0\leq r\leq q-1}\omega^{ri^{-1}}(\sum_{0\leq s\leq q-1}\omega^s s^{C_q,C_q}_{E\times C_q,(m,s)})=0.
\end{align*}
Similar to our calculation in the proof of Lemma \ref{l6.3},  $s^{C_q,C_q}_{E\times C_q,(m,n)}s_{\Delta(G,i),ri}=s^{C_q,C_q}_{E\times C_q,(m,ni^{-1}+r)}$. Moreover, $s^{C_q,C_q}_{E\times C_q,(m,n)}s^{C_q,C_q}_{C_q\times C_q,(r,s)}\not=0$ if and only if $r=-n$, in which case the product yields the element $\lambda s^{C_q,C_q}_{E\times C_q,(m,s)}$. Hence
\begin{align*}
s^{C_q,C_q}_{E\times C_q,(m,n)}\alpha_i^\omega&=\sum_{0\leq r\leq q-1}s^{C_q,C_q}_{E\times C_q,(m,ni^{-1}+r)}\omega^{r}-\dfrac{\lambda}{\lambda}\sum_{0\leq s\leq q-1}\omega^{s-ni^{-1}}s^{C_q,C_q}_{E\times C_q,(m,s)}
\end{align*}
letting $ni^{-1}+r=t$ and reindexing gives
\begin{align*}
s^{C_q,C_q}_{E\times C_q,(m,n)}\alpha_i^\omega=\sum_{0\leq t\leq q-1}s^{C_q,C_q}_{E\times C_q,(m,t)}\omega^{t-ni^{-1}}-\sum_{0\leq s\leq q-1}s^{C_q,C_q}_{E\times C_q,(m,s)}\omega^{s-ni^{-1}}=0.
\end{align*}

We also have $s^{C_q,C_q}_{\Delta(G,m),n}s^{C_q,C_q}_{\Delta(G,i),ri}=s^{C_q,C_q}_{\Delta(G,mi),n+mri}$, which can be seen more clearly if one lets $\varphi$ to be the character of the resulting basis element and observes that 
\begin{align*}
\varphi(x\times x^{mi})=n(x\times x^m)ri(x^m\times x^{mi})=a^{n+mri}.
\end{align*}
Thus
\begin{align*}
s^{C_q,C_q}_{\Delta(G,m),n}\alpha_i^\omega&=\sum_{0\leq r\leq q-1}s^{C_q,C_q}_{\Delta(G,mi),n+rmi}\omega^r-\dfrac{1}{\lambda}\sum_{0\leq r,s\leq q-1}s^{C_q,C_q}_{C_q\times C_q,(n+mr,s)}\omega^{s+ri^{-1}}.
\end{align*}
In the first sum, we let $n+rmi=r'$ and $j=mi$ which implies $r=(r'-n)j^{-1}$. Also in the second sum, we let $n+mr=r^{''}$ and $j=mi$, from which we get $r=(r^{''}-n)m^{-1}$ and $s+ri^{-1}=s+(r''-n)j^{-1}$. After substituting everything, we finally obtain

\begin{align*}
s^{C_q,C_q}_{\Delta(C_q,m),n}\alpha^{\omega}_i&=\sum_{0\leq r'\leq q-1}s^{C_q,C_q}_{\Delta(C_q,j),r'}\omega^{(r'-n)j^{-1}}-\dfrac{1}{\lambda}\sum_{0\leq r'',s\leq q-1}s^{C_q,C_q}_{C_q\times C_q,(r'',s)}\omega^{s+(r''-n)j^{-1}}\\
&=\omega^{-nj^{-1}}\left[\sum_{0\leq r'\leq q-1}\omega^{r'j^{-1}}s^{C_q,C_q}_{\Delta(C_q,j),r'}-\dfrac{1}{\lambda}\sum_{0\leq r'',s\leq q-1}\omega^{s+r''j^{-1}}s^{C_q,C_q}_{C_q\times C_q,(r'',s)}\right]\\
&=w^{-nj^{-1}}\alpha^{\omega}_j.
\end{align*}
\end{proof}

The dimension of $\mathbb{K}_\ell\Lambda^A_{C_q}$ is $(\rho +1)^2+(q-1) + (\rho-1)^2$. Lemma \ref{l3.3},  gives $\rho +1$ simple $\mathbb{K}_\ell\Lambda^A_{C_q}$-modules, each of dimension $\rho +1$. Theorem \ref{t5.5} together with the lemmas of Section \ref{s6} gives $\rho-1$ simple $\mathbb{K}_\ell\Lambda^A_{C_q}$-modules, each of dimension $\rho-1$ and there are also $q-1$ one-dimensional $\mathbb{K}_\ell\Lambda^A_{C_q}$-modules. Hence there is no room for another simple $\mathbb{K}_\ell\Lambda^A_{C_q}$-module. Thus the proof of Theorem \ref{t1} is complete.
{}
\end{document}